\numberwithin{equation}{section}
\newtheorem{theorem}{Theorem}[section]
\newtheorem{lemma}[theorem]{Lemma}
\newtheorem{proposition}[theorem]{Proposition}
\newtheorem{corollary}[theorem]{Corollary}
\newtheorem{proposition-definition}[theorem]{Proposition-definition}
\theoremstyle{remark}
\newtheorem{remark}[theorem]{Remark}
\DeclareMathOperator{\Jac}{Jac}
\DeclareMathOperator{\Blow}{Blow}
\renewcommand{\det}{\mathrm{det}}
\newcommand{\st}{\mathrm{st}}
\newcommand{\ch}{\mathrm{ch}}
\newcommand{\Todd}{\mathrm{Todd}}
\newcommand{\Pic}{\mathrm{Pic}}
\newcommand{\Hilb}{\mathrm{Hilb}}
\newcommand{\supp}{\mathrm{supp}}
\newcommand{\End}{\mathrm{End}}
\newcommand{\Ext}{\mathrm{Ext}}
\newcommand{\Sym}{\mathrm{Sym}}
\newcommand{\id}{\mathbf{1}}
\newcommand{\rk}{\mathrm{rk}}
\newcommand{\SL}{\mathrm{SL}}
\newcommand{\U}{\mathrm{U}}
\newcommand{\image}{\mathrm{Im}}
\newcommand{\Tot}{\mathrm{Tot}}
\newcommand{\tr}{\mathrm{tr} \,}
\newcommand{\Fix}{\mathrm{Fix}}
\newcommand{\Prym}{\mathrm{Prym}}
\newcommand{\sm}{\mathrm{sm}}
\newcommand{\ssl}{\mathrm{ssl}}
\newcommand{\Nm}{\mathrm{Nm}}
\newcommand{\Sp}{\mathrm{Sp}}
\newcommand{\PGL}{\mathrm{PGL}}
\newcommand{\Aa}{\mathcal{A}}
\newcommand{\Bb}{\mathcal{B}}
\newcommand{\Dd}{\mathcal{D}}
\newcommand{\Ee}{\mathcal{E}}
\newcommand{\Ff}{\mathcal{F}}
\newcommand{\Gg}{\mathcal{G}}
\newcommand{\Ii}{\mathcal{I}}
\newcommand{\Hh}{\mathcal{H}}
\newcommand{\Ll}{\mathcal{L}}
\newcommand{\Mm}{\mathcal{M}}
\newcommand{\Nn}{\mathcal{N}}
\newcommand{\Oo}{\mathcal{O}}
\newcommand{\Pp}{\mathcal{P}}
\newcommand{\Tt}{\mathcal{T}}
\newcommand{\Uu}{\mathcal{U}}
\newcommand{\h}{\mathrm{h}}
\newcommand{\q}{\mathrm{q}}
\renewcommand{\v}{\mathrm{v}}
\newcommand{\w}{\mathrm{w}}
\renewcommand{\H}{\mathrm{H}}
\newcommand{\I}{\mathrm{I}}
\newcommand{\J}{\mathrm{J}}
\newcommand{\M}{\mathrm{M}}
\newcommand{\N}{\mathrm{N}}
\renewcommand{\P}{\mathrm{P}}
\newcommand{\ol}[1]{\overline{#1}}
\newcommand{\cC}{\mathbf{C}}
\newcommand{\dD}{\mathbf{D}}
\newcommand{\hH}{\mathbf{H}}
\newcommand{\jJ}{\mathbf{J}}
\newcommand{\iI}{\mathbf{I}}
\newcommand{\mM}{\mathbf{M}}
\newcommand{\nN}{\mathbf{N}}
\newcommand{\pP}{\mathbf{P}}
\newcommand{\sS}{\mathbf{S}}
\newcommand{\tT}{\mathbf{T}}
\newcommand{\vV}{\mathbf{v}}
\newcommand{\wW}{\mathbf{w}}
\newcommand{\xX}{\mathbf{X}}
\newcommand{\yY}{\mathbf{Y}}
\renewcommand{\AA}{\mathbb{A}}
\newcommand{\CC}{\mathbb{C}}
\newcommand{\ZZ}{\mathbb{Z}}
\newcommand{\LL}{\mathbb{L}}
\newcommand{\KK}{\mathbb{K}}
\newcommand{\PP}{\mathbb{P}}
\newcommand{\WW}{\mathbb{W}}
\newcommand{\Fff}{\mathscr{F}}
\newcommand{\Ggg}{\mathscr{G}}
\newcommand{\BBB}{\mathrm{(BBB)}}
\newcommand{\BAA}{\mathrm{(BAA)}}
\newcommand{\ABA}{\mathrm{(ABA)}}
\newcommand{\AAB}{\mathrm{(AAB)}}
\newcommand{\quotient}[2]{{\raisebox{.2em}{\thinspace $#1$ \hspace{-0.2cm}}\left / \raisebox{-.15em}{\hspace{-0.1cm} $#2$}\right.}}
\newcommand\Quotient[2]{
        \mathchoice
            {
                \text{\raise1ex\hbox{\thinspace $#1$}\Big{/} \lower1ex\hbox{$#2$} \thinspace}%
            }
            {
                #1\,/\,#2
            }
            {
                #1\,/\,#2
            }
            {
                #1\,/\,#2
            }
    }
\newcommand\GIT[2]{
        \mathchoice
            {
                \text{\raise1ex\hbox{\thinspace $#1$}\Big{/}\!\!\!\!\Big{/} \lower1ex\hbox{$#2$} \thinspace}%
            }
            {
                #1\,/\,#2
            }
            {
                #1\,/\,#2
            }
            {
                #1\,/\,#2
     a       }
    }
\newcommand{\morph}[6]{\begin{array}{cccc} #6: & #1  & \stackrel{#5}{\longrightarrow} &  #2  \\ & #3 & \longmapsto & #4  \end{array}}
\newcommand{\birrat}[6]{\begin{array}{cccc} #6: & #1  & \stackrel{#5}{\dashrightarrow} &  #2  \\ & #3 & \longmapsto & #4  \end{array}}
\title[Degeneration of natural Lagrangians and Prymian integrable systems]{\bf Degeneration of natural Lagrangians and Prymian integrable systems}
\author[E. Franco]{Emilio Franco}
\address{Emilio Franco,
\newline\indent Centro de An\'alise Matem\'atica, Geometria e Sistemas Din\^{a}micos, 
\newline\indent Instituto Superior T\'ecnico, Universidade de Lisboa, 
\newline\indent Av. Rovisco Pais s/n, 1049-001 Lisboa, Portugal}
\email{emilio.franco@tecnico.ulisboa.pt}
\thanks{EF holds an FCT Investigator grant supported by the Scientific Employment Stimulus program, fellowship reference CEECIND/04153/2017, funded by FCT (Portugal).}
\date{\today}
\begin{document}

\begin{abstract}
Starting from an anti-symplectic involution on a K3 surface, one can consider a natural Lagrangian subvariety inside the moduli space of sheaves over the K3. One can also construct a Prymian integrable system following a construction of Markushevich--Tikhomirov, extended by Arbarello--Sacc\`a--Ferretti, Matteini and Sawon--Chen. In this article we address a question of Sawon, showing that these integrable systems and their associated natural Lagrangians degenerate, respectively, into fix loci of involutions considered by Heller--Schaposnik, Garcia-Prada--Wilkins and Basu--Garcia-Prada.

Along the way we find interesting results such as the proof that the Donagi--Ein--Lazarsfeled degeneration is a degeneration of symplectic varieties, a generalization of this degeneration, originally described for K3 surfaces, to the case of an arbitrary smooth projective surface, and a description of the behaviour of certain involutions under this degeneration. 
\end{abstract}

\maketitle

\tableofcontents

\section{Introduction}

\subsection{Context and motivation}

By means of non-abelian Hodge theory \cite{hitchin-self, simpson1, simpson2, donaldson, corlette}, the moduli space of Higgs bundles carries a hyperK\"ahler structure which naturally defines a triple of symplectic structures on it, each holomorphic with respect to one of the three complex structures. For one of this complex structures, the Higgs moduli space is a quasiprojective variety, further equipped with a proper fibration onto a vector space whose generic fibres are abelian Lagrangians with respect to the corresponding holomorphic symplectic form. These data define the Hitchin integrable system \cite{hitchin_duke}. 

The moduli space of pure dimension $1$ sheaves on a symplectic surface ({\it i.e.} K3 or abelian) can be equipped with the Mukai \cite{mukai1} holomorphic symplectic form and with the Le Potier (support) fibration \cite{lepotier}, whose generic fibres are Jacobians. After Beauville \cite{beauville_fibr}, these fibres are also Lagrangians. On the case of K3 surfaces, the base of the Le Potier morphism is a linear system, hence the moduli space of pure dimension $1$ sheaves on a K3 becomes a projective integrable system, named the Beauville--Mukai integrable system. Putting aside that it is projective, the Beauville--Mukai integrable system has a similar description to that of the Hitchin system obtained via the spectral correspondence. These similarities become even more explicit with the construction of the Donagi--Ein--Lazarsfeld \cite{donagi&ein&lazarsfeld} degeneration of the first integrable system into the later. 

The cohomological structure of the Hitchin system is very rich and many surprising identities occur within this framework, giving rise to a great number of conjectures. One of them is topological Mirror symmetry conjecture \cite{hausel&thaddeus}, predicting the equality between the stringy E-polynomials of Higgs moduli spaces for a pair of Langlands dual groups, proven for $\SL(n,\CC)$ and $\PGL(n,\CC)$ \cite{hausel&thaddeus, groechenig&weyss&ziegler}. Another conjectural cohomological identity is the P=W conjecture \cite{deCataldo&hausel&migliorini}, predicting that the morphism in cohomology induced by the non-abelian Hodge correspondence exchanges the preverse filtration associated to the Hitchin fibration with the weight filtration on the associated character variety. This was proven by de Cataldo--Hausel--Migliorini \cite{deCataldo&hausel&migliorini} in the rank $2$ case, and by de Cataldo--Maulik--Shen \cite{deCataldo&maulik&shen_1, deCataldo&maulik&shen_2} in the case of base curves of genus $2$. The Donagi--Ein--Lazarsfeld degeneration in the case of abelian surfaces was a key element in the work of de Cataldo--Maulik--Shen. Using this degeneration, they constructed a specialization morphism in cohomology, and, so, they could apply the powerful machinery developped by Markman \cite{markman_2, markman_3, markman_4} for the study the cohomology of the moduli of sheaves on a K3, to the Higgs moduli space.

The cohomological $\chi$-independence is another astonishing property of the moduli spaces of (twisted) Higgs bundles and pure dimension $1$ sheaves on del Pezzo surfaces. It states that the intersection cohomology is independent from the Euler characteristic $\chi$ of the classified objects and was recently proven by Maulik--Shen \cite{maulik&shen}.

Making use of the hyperK\"ahler structure, \cite{kapustin&witten} Kapustin and Witten introduced branes in the Higgs moduli space, setting that a $\BBB$-brane is a hyperholomorphic subvariety supporting a hyperholomorphic sheaf, while a $\BAA$-brane is a flat bundle over a complex Lagrangian subvariety for the holomorphic symplectic form associated to the first complex structure. Substituting the first by the second and third complex structures in this definition, we obtain $\ABA$ and $\AAB$-branes, respectively. As indicated in \cite{kapustin&witten}, the Mirror symmetry conjecture predicts a duality between $\BBB$ and $\BAA$-branes. This conjecture has motivated many authors to construct and study branes on Higgs moduli spaces. 
Most of the previous constructions are obtained by considering fixed loci of involutions on the moduli space, we highlight the early construction of $\ABA$-branes by Baraglia and Schaposnik \cite{baraglia&schaposnik} who considered those involutions obtained out of a pair of anti-holomorphic involutions on the base curve and on the group, the $\BBB$ and $\BAA$-branes obtained from natural involutions considered by Heller--Schaposnik \cite{heller&schaposnik} and Garcia-Prada--Wilkins \cite{garciaprada&wilkins} out of holomorphic involutions on the base curve, the work of Garcia-Prada and Ramanan \cite{garciaprada&ramanan} who classified the involutions obtained out of a combination of outer automorphisms of the group and tensorization, and the recent work \cite{basu&garciaprada}, where Basu and Garcia-Prada extended this study to include the action of holomorphic involution of the base curve, a set-up who already appeared in \cite{BCFG} for the case of elliptic curves. As indicated by Gukov in \cite{gukov}, Mirror duality between  $\BBB$ and $\BAA$-branes would imply certain cohomological relations between their support, a direction that was taken by Hausel--Mellit--Pei \cite{hausel&mellit&pei} to provide strong evidence for the pair of branes considered by Hitchin in \cite{hitchin_char} which are constructed out of the pair of Nadler--Langlads groups \cite{nadler} given by $\Sp(2m,\CC)$ and $\U(m,m)$.

Kapustin--Witten's definition of branes extends naturally to hyperK\"ahler varieties other than the Higgs moduli space. The case of the moduli space of sheaves over a symplectic surface was considered by the author, Jardim and Menet in \cite{GYM}, where they constructed branes of any type arising as fixed loci of natural involutions on the moduli induced by involutions on the surface, and studied the behaviour of these natural branes under some correspondences. 

Within this setting, the natural involution associated to an anti-symplectic involution on a K3 surface is again anti-symplectic, and its fixed locus defines a complex Lagrangian subvariety of the moduli, which we call natural Lagrangian subvariety. One obtains a symplectic involution by composing this natural anti-symplectic involution with the dualizing involution on the fibres (perhaps tensoring with a line bundle), which is also anti-symplectic. Hence, considering the fixed locus of the symplectic involution constructed out of anti-symplectic involution on a K3 surface, one obtains a class of integrable systems whose Lagrangian fibres are Prym varieties. These Prymian integrable systems on K3 surfaces were first considered by Markushevich--Tikhomirov \cite{markushevich&tikhomirov}, and extended by Arbarello--Sacc\`a--Ferretti \cite{ASF}, Matteini \cite{matteini} and Sawon--Shen \cite{sawon&shen, shen}. In \cite{sawon}, Sawon conjectured that these Prymian integrable systems degenerate into integrable systems related to the Hitchin system, leaving open the description of these conjectural systems. 

Sawon's conjecture was the motivation for our work as such degeneration of Prymian integrable system and that of the associated natural Lagrangian subverieties, could open the door for a cohomological study of pairs of $\BBB$ and $\BAA$-branes in the Hitchin system, leading perhaps to strong evidence of their duality, by means of the specialization morphism in cohomology given in \cite{deCataldo&maulik&shen_1, deCataldo&maulik&shen_2}. 

In their recent paper \cite{sawon&shen}, Sawon and Shen provided a degeneration of a particular choice of Prymian integrable system into the $\Sp(2m,\CC)$-Higgs moduli space.

\subsection{Main results}

In this article we extend the Donagi--Ein--Lazarsfeld construction \cite{donagi&ein&lazarsfeld} to the case of a curve fitting in an arbitrary smooth projective surface, obtaining a degeneration of the moduli of pure dimension $1$ sheaves on the surface into the moduli space of Higgs bundles on the curve, twisted by the normal bundle of the curve inside the surface [Theorem \ref{tm Mm_S}].We highlight that, in particular, our degeneration connects [Remark \ref{rm chi-independence}] the two moduli spaces for which the cohomological $\chi$-independence is known to hold \cite{maulik&shen}. We also study how certain involutions fit into this degeneration, finding that natural involutions and their composition with the dualizing involution on the moduli space of pure dimension $1$ sheaves on the surface, degenerate into involutions on the moduli space of twisted Higgs bundles that we describe explicitely [Lemmas \ref{lm eta_sS at t=0} and \ref{lm fibrewise description of lambda}]. When the twist is the canonical bundle of the curve, {\it i.e.} for Higgs bundles in the usual sense, these involutions coincide with those studied by Heller--Schaposnik \cite{heller&schaposnik}, Garcia-Prada--Wilkins \cite{garciaprada&wilkins} and Basu--Garcia-Prada \cite{basu&garciaprada}. This allow us to construct a degeneration of the subvarieties given by loci fixed by these involutions [Theorems \ref{tm degeneration of N} and \ref{tm degeneration of P}].

In the case of a K3 surface equipped with an antisymplectic involution the previous results provide degenerations of Prymian integrable system and their associated natural Lagrangian subarieties of the moduli of pure dimension $1$ sheaves on a K3. In particular, this gives a degeneration of the Prymian integrable systems constructed in \cite{ASF} by Arbarello--Sacc\`a--Ferretti [Section \ref{sc ASF}], and also [Section \ref{sc MT}] by Markushevich--Tikhomirov \cite{markushevich&tikhomirov}, Matteini \cite{matteini} and Sawon--Chen \cite{sawon&shen2, shen}. Our work then provides an answer to the question posed by Sawon in \cite{sawon} (which, strictly speaking, refers only to the case of primitive first Chern classes) and to the reformulation of Sawon's question on the non-primitve case. The later is, perhaps, a more interesting context than the first as Prymian integrable systems associated to primitive first Chern classes degenerate into subvarieties of rank $1$ Higgs moduli spaces, while those with non-primitive first Chern class correspond to moduli space of Higgs bundles of higher rank. 

We also review the degeneration given by Sawon--Shen \cite{sawon&shen} into the $\Sp(2m,\CC)$-Higgs moduli space, studying as well the degeneration of the associated natural Lagrangian [Section \ref{sc sawon and shen}]. It is worth noticing that we find that this natural Lagrangian degenerates into the fixed locus of an involution associated to $\U(m,m)$-Higgs bundles, whose Nadler--Langlads group is $\Sp(2m,\CC)$.

We also show that the Donagi--Ein--Lazarsfeld degenerations in the case of symplectic surfaces is equipped with a deformation of the symplectic structure of the moduli spaces [Theorem \ref{tm relative symplectic form}]. This allow us to understand the degenerations of  natural Lagrangian subvarieties studied in \cite{GYM} and the Prymian integrable systems as degenerations of $\BAA$ and $\BBB$-branes [Corollary \ref{co degeneration of branes}]. Finally, we prove that these branes are dual under a Fourier--Mukai transform restricted to the locus of pure $1$ dimension sheaves with smooth support curves [Proposition \ref{pr FM duals} and Corollary \ref{co FM duals}].

\subsection{Outline of the paper}

The paper is structured as follows. In Section \ref{sc preliminaries} we review the necessary background for our work. Section \ref{sc moduli of sheaves} contains generalities on the moduli space of pure dimension $1$ sheaves on surfaces. In Section \ref{sc natural involutions} we study the involutions on the moduli space naturally induced by pull-back of involutions on the surface. Section \ref{sc Prymian fibrations} is dedicated to the study of the involution on the moduli space of pure dimension $1$ sheaves induced by dualizing the restriction of a sheaf to its fitting support. We also study the composition of this involution with a natural involution, and study their fixed loci, which is generically a fibration by Prym varieties. In Section \ref{sc Prymian integrable systems} we consider these involutions starting from an anti-symplectic involution on a K3 surface, revisiting the Markushevich--Tikhomirov construction of Prymian integrable systems. Since we are interested on moduli spaces of sheaves with a non-primitive first Chern class, we treat this case in detail. Also, we study the Lagrangian subvarieties that arise from the fixed point locus of the natural involutions on the moduli constructed out of our anti-symplectic involution on the K3 surface. In Section \ref{sc ruled surfaces} we collect the necessary facts about ruled surfaces which will be used in Section \ref{sc L-Higgs bundles} to study twisted Higgs bundles, whose spectral data provide a particularly relevant example of pure dimension $1$ sheaves.

We describe in Section \ref{sc involutions and Higgs bundles} certain involutions on moduli spaces of twisted Higgs bundles. In Section \ref{sc involutions on ruled surfaces} we describe some involutions on a ruled surface and study their relation with the Poisson structure. We construct the corresponding involutions on the moduli spaces of spectral data of twisted Higgs bundles in Section \ref{sc natural involution under spectral correspondence} and study their behaviour under the spectral correspondence. We obtain involutions on the moduli space of $L$--Higgs bundles, which in the particular case of the twisting by the canonical bundle, are involutions that have been widely studied by Heller--Schaposnik, Garcia-Prada--Wilkins and Basu--Garcia-Prada.

The main results of the paper are contained in Section \ref{sc degenerations}. In Section \ref{sc degeneration of DEL} we provide a generalization to the case of an arbitrary smooth projective surface of the Donagi--Ein--Lazarsfeld degeneration, originally described for K3 surfaces. Hence, we obtain a degeneration of the moduli space of pure dimension $1$ sheaves on a surface into the moduli spaces of Higgs bundles twisted by the normal bundle of a curve inside our surface. In Section \ref{sc symplectic structure} we provide a deformation of the symplectic structure of the moduli spaces involved in the Donagi--Ein--Lazarsfeld degeneration in the case of symplectic surfaces, showing that it provides a degeneration of symplectic varieties. In Section \ref{sc degenerating involutions}, we study the behaviour under the Donagi--Ein--Lazarsfeld degeneration of the involutions considered in Sections \ref{sc natural involutions} and \ref{sc Prymian fibrations}. We then obtain a degeneration of the subvarieties described by their fixed loci. In Section \ref{sc degeneration of ASF systems} we provide an explicit description of such degenerations in the context of a K3 surface equipped with an anti-symplectic involution. We describe the non-linear degenerations of the Prymian integrable systems constructed by Arbarello--Sacc\`a--Ferretti, Markushevich--Tikho\-mi\-rov, Matteini amd Sawon--Shen, showing that they degenerate into integrable systems related to the Hitchin system. We also describe the degeneration of the associated natural Lagrangian subvarieties.

Finally, in Section \ref{sc branes}, we consider the natural Lagrangian subvarieties and the Prymian integrable systems in the context of branes, and provide some evidence for their duality.

\subsubsection*{Acknowledgments}

The author is thankful to Marcos Jardim for his encouragement to address this project, and specially to Gregoire Menet and Justin Sawon, for their comments, questions and suggestions. The author wants to thank Justin Sawon also for having shared a draft of the article \cite{sawon&shen}, which was useful for detecting some previous errors.

\section{Pure dimension 1 sheaves on surfaces and involutions on their moduli}

\label{sc preliminaries}

\subsection{Generalities on the moduli space of sheaves on surfaces}

\label{sc moduli of sheaves}

Let $S$ be a projective surface and take $\H$ to be a polarization on it. We say that a coherent sheaf $\Ff$ on $S$ is pure dimension $d$ if its schematic support has dimension $d$ and every subsheaf is also supported on dimension $d$ subschemes. In that case, its Hilbert polynomial $P(\Ff,\H)$ has degree $d$ and we define its $\H$-polarized rank $\rk(\Ff, \H)$ to be the leading term of $P(\Ff,\H)$ multiplied by $d!$. A coherent sheaf $\Ff$ is {\it $\H$-stable} ({\it resp.} {\it $\H$-semistable}) if it is of pure dimension, and for every proper subsheaf $\Ff' \subset \Ff$ we have that its Hilbert polynomial satisfy $P(\Ff',\H)/\rk(\Ff',\H) < P(\Ff,\H)/\rk(\Ff,\H)$ ({\it resp.} $P(\Ff',\H)/\rk(\Ff',\H) \leq P(\Ff,\H)/\rk(\Ff,\H)$) when $n \gg 0$. A semistable sheaf is polystable if it decomposes as a direct sum of stable sheaves. Simpson provided in \cite{simpson1} the existence of the moduli space $\M_{S}^\H(P)$ of pure dimension $\H$-semistable sheaves with Hilbert polynomial $P$, whose closed points represent polystable sheaves. 

The topological invariants of a sheaf over s smooth surface $S$ determined by the Hilbert polynomial are the $\H$-polarized rank $\rk(\Ff,H)$, the first Chern class $c_1(\Ff)$, and the Euler characteristic $\chi(\Ff)$. To simplify the computations, we introduce the {\it Mukai vector}, $\v(\Ff) := \ch(\Ff) \cdot \sqrt{\Todd(S)}$. Note that $\v(\Ff) \in H^{2*}(S, \ZZ)$  and the cup product in cohomology provides the {\it Mukai pairing} which endows $H^{2*}(S, \ZZ)$ with a lattice structure. We abbreviate by $\M^{\H}_S(\v_a)$ the moduli space of $\H$-semistable sheaves $S$ with Hilbert polynomial determined by the Mukai vector $\v \in H^{2*}(S, \ZZ)$.

It follows from deformation theory that the smooth locus of $\M_{S,\H}(\v_a)$ is the locus defined by those sheaves that are simple and that the tangent space to $\M_{S,\H}(\v_a)$ at the point determined by the simple coherent sheaf $\Ff$ corresponds with
\[
\Tt_\Ff \M_{S,\H}(\v_a) = \Ext^1_{S}(\Ff, \Ff),
\]
and its cotangent space is
\[
\Tt^*_\Ff \M_{S,\H}(\v_a) = \Ext^1_{S}(\Ff, \Ff \otimes K_S).  
\]
Another consequence of deformation theory is that $\M_{S,\H}(\v_a)$ is smooth at those points corresponding with simple sheaves. 

If, further, $S$ is a K3 or abelian surface, its canonical bundle $K_S$ is trivial and a choice of a (non-vanishing) section $\Omega_S$ provides a symplectic form on $S$. In both cases we say that $S$ is a {\it symplectic surface}. Thanks to Serre duality, $H^2(S, \Oo_S)$ is dual to $H^0(S, K_S) \cong \CC$.

\begin{theorem}[\cite{mukai1}]
Let $S$ be a symplectic surface. Then, $\M_{S,\H}(\v_a)$ is equipped with a holomorphic $2$-form $\Omega_\M$ defined by taking the trace of the Yoneda product composed with $\Omega_S$,
\[
\Ext^1_{S}(\Ff, \Ff) \wedge \Ext^1_{S}(\Ff, \Ff) \stackrel{\circ}{\longrightarrow} \Ext^2_{S}(\Ff, \Ff) \stackrel{\tr}{\longrightarrow}H^{2}(S,\Oo_{S}) \stackrel{\Omega_S(\cdot)}{\longrightarrow} \CC.
\]
Furthermore, $\Omega_\M$ is non-degenrate on the smooth locus of $\M_{S,\H}(\v_a)$, defining a symplectic form there.
\end{theorem}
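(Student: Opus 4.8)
The plan is to build $\Omega_\M$ first as a pointwise alternating pairing on the smooth (i.e.\ simple) locus, then upgrade it to a global holomorphic $2$-form using a local universal sheaf, and finally read off non-degeneracy from Serre duality on $S$ together with the triviality of $K_S$. At a simple sheaf $\Ff$ the excerpt already gives $\Tt_\Ff\M_{S,\H}(\v_a)=\Ext^1_S(\Ff,\Ff)$, and I take $\Omega_\M|_\Ff$ to be the composite displayed in the statement,
\[
\Ext^1_S(\Ff,\Ff)\times\Ext^1_S(\Ff,\Ff)\xrightarrow{\ \circ\ }\Ext^2_S(\Ff,\Ff)\xrightarrow{\ \tr\ }H^2(S,\Oo_S)\xrightarrow[\ \sim\ ]{\ \Omega_S(\cdot)\ }\CC,
\]
the last arrow being the isomorphism determined by $\Omega_S$ (using that $\Omega_S$ trivialises $K_S$, equivalently that $H^2(S,\Oo_S)$ is Serre-dual to $H^0(S,K_S)=\CC\,\Omega_S$). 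The first thing to verify is that this pairing is alternating: the trace satisfies $\tr(a\circ b)=-\,\tr(b\circ a)$ for $a,b\in\Ext^1_S(\Ff,\Ff)$ by graded cyclicity, and postcomposing with the linear map $\Omega_S(\cdot)$ preserves antisymmetry.

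\textbf{Holomorphicity in families.} Simplicity is an open condition, and \'etale-locally over the smooth locus there is a universal sheaf $\Ee$ on $S\times U$ (a twisted one suffices, since $\mathcal{E}xt^\bullet$ of a twisted sheaf with itself is untwisted). Near the simple locus the relative sheaves $\mathcal{E}xt^i_{p_U}(\Ee,\Ee)$ are locally free, with $\mathcal{E}xt^1_{p_U}(\Ee,\Ee)\cong\Tt_U$. Relative Yoneda multiplication, the relative trace $\mathcal{E}xt^2_{p_U}(\Ee,\Ee)\to R^2p_{U*}\Oo_{S\times U}$, and the map $R^2p_{U*}\Oo_{S\times U}\to\Oo_U$ induced by $\Omega_S$ are all morphisms of coherent sheaves on $U$, hence holomorphic, and their composite is a holomorphic section of $\Omega^2_U$ restricting to $\Omega_\M|_\Ff$ at each point. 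Twisting $\Ee$ by a line bundle pulled back from $U$ does not change $\mathcal{E}xt^\bullet_{p_U}(\Ee,\Ee)$, so these local forms are independent of the choices and glue to a global holomorphic $2$-form $\Omega_\M$.

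\textbf{Non-degeneracy.} Here the symplectic form on $S$ enters decisively. Serre duality on the surface yields a perfect pairing
\[
\Ext^1_S(\Ff,\Ff)\;\times\;\Ext^1_S(\Ff,\Ff\otimes K_S)\xrightarrow{\ \circ\ }\Ext^2_S(\Ff,\Ff\otimes K_S)\xrightarrow{\ \tr\ }H^2(S,K_S)\cong\CC,
\]
which is exactly the identification $\Tt^*_\Ff\M_{S,\H}(\v_a)=\Ext^1_S(\Ff,\Ff\otimes K_S)$ recalled in the excerpt. Since $S$ is symplectic, $\Omega_S$ gives an isomorphism $\Oo_S\xrightarrow{\sim}K_S$, hence $\Ext^1_S(\Ff,\Ff\otimes K_S)\cong\Ext^1_S(\Ff,\Ff)$; under this identification the Serre pairing becomes precisely $\Omega_\M|_\Ff$. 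Therefore $\Omega_\M|_\Ff$ is perfect, i.e.\ $\Omega_\M$ is non-degenerate on the smooth locus.

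\textbf{Closedness and main obstacle.} Antisymmetry is formal and non-degeneracy is a repackaging of Serre duality; the delicate point is the regularity of $\Omega_\M$ in families and, above all, the closedness needed to justify the word ``symplectic''. For $d\Omega_\M=0$ I would use the Atiyah-class description: up to a universal scalar, $\Omega_\M=p_{U*}\big(\tr(\mathrm{at}(\Ee)\cup\mathrm{at}(\Ee))\cup\Omega_S\big)$, and the Bianchi identity for the Atiyah class forces the resulting $2$-form to be closed; alternatively one invokes Mukai's original deformation-theoretic argument in \cite{mukai1}. I expect the sign and degree bookkeeping in this Atiyah-class computation --- arranging that the cup products and the multiplication by $\Omega_S$ land in $\Omega^2_U$ and produce a closed form --- to be the most technical part of the argument.
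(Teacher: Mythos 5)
The paper does not actually prove this theorem --- it is quoted verbatim from Mukai's 1984 paper \cite{mukai1} and used as a black box --- so there is no internal argument to compare yours against. Your outline is the standard proof (Mukai's original, as organized for instance in Huybrechts--Lehn, Ch.~10): the pointwise pairing via Yoneda product, trace and $\Omega_S$; antisymmetry from graded cyclicity of the trace on $\Ext^1\times\Ext^1$; globalization over the simple locus via a local (twisted) universal sheaf and the relative $\mathcal{E}\!xt$ sheaves; and non-degeneracy as a repackaging of Serre duality through the trivialization $\Oo_S\cong K_S$ determined by $\Omega_S$. All of these steps are correct, and your observation that twisting the universal sheaf by a line bundle from the base leaves the construction unchanged is exactly the point needed for gluing. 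The one substantive item you defer rather than prove is closedness of $\Omega_\M$, which is indeed the only non-formal part of the theorem: your proposed route via the Atiyah-class expression $p_{U*}\bigl(\tr(\mathrm{at}(\Ee)\cup\mathrm{at}(\Ee))\cup\Omega_S\bigr)$ is the one carried out in the literature, but as written your argument establishes only that $\Omega_\M$ is a holomorphic, non-degenerate $2$-form, with the word ``symplectic'' still resting on the citation. Since the paper itself offers no more than the citation, this is an acceptable level of detail here.
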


Bottaccin \cite{bottacin_1} and Markman \cite{markman} generalized this construction to the case of Poisson surfaces, {\it i.e.} those equipped with Poisson bi-vector, a non-zero section $\Theta_S \in H^0(S, K_S^{-1})$.

\begin{theorem}[\cite{bottacin_1} and \cite{markman}] \label{tm Bottaccin-Markman Poisson str}
Let $S$ be is equipped with a Poisson bi-vector $\Theta_S$. Then, one can define a (closed although possibly degenerate) Poisson structure $\Theta_\M$ on the smooth locus of $\M_{S,\H}(\v_a)$ by taking the trace of the Yoneda product composed with $\Theta_S$,
\[
\Ext^1_{S}(\Ff, \Ff\otimes K_S) \wedge \Ext^1_{S}(\Ff, \Ff \otimes K_S) \stackrel{\circ}{\longrightarrow} \Ext^2_{S}(\Ff, \Ff\otimes K_S^2) \stackrel{\tr}{\longrightarrow}H^{2}(S,K_S^2) \stackrel{\langle \cdot , \Theta_S \rangle}{\longrightarrow} H^{2}(S,K_S) \cong \CC.  
\]
\end{theorem}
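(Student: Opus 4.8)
The plan is to mimic the Mukai construction, replacing the symplectic form $\Omega_S$ by the Poisson bivector $\Theta_S$ and tracking the $K_S$-twists that arise. First I would recall that, by deformation theory (as recorded above), the cotangent space to $\M_{S,\H}(\v_a)$ at a simple sheaf $\Ff$ is $\Ext^1_S(\Ff,\Ff\otimes K_S)$. A Poisson structure on the smooth locus is, by definition, a bivector field, i.e.\ a global section of $\Lambda^2 \Tt_{\M}$, which pointwise is an element of $\Lambda^2 \Ext^1_S(\Ff,\Ff\otimes K_S)^* = \Lambda^2 \Ext^1_S(\Ff,\Ff)$; equivalently, it is enough to produce a skew bilinear pairing on the cotangent spaces $\Ext^1_S(\Ff,\Ff\otimes K_S)$ that varies algebraically with $\Ff$. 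So the first key step is: take two cotangent vectors $a,b\in\Ext^1_S(\Ff,\Ff\otimes K_S)$, form the Yoneda product $a\circ b\in\Ext^2_S(\Ff,\Ff\otimes K_S^{\otimes 2})$, apply the trace map $\tr\colon \Ext^2_S(\Ff,\Ff\otimes K_S^{\otimes 2})\to H^2(S,K_S^{\otimes 2})$, and finally contract with $\Theta_S\in H^0(S,K_S^{-1})$ to land in $H^2(S,K_S)\cong\CC$. This is exactly the composition displayed in the statement, and it manifestly defines a bilinear pairing; its skew-symmetry follows from the graded-commutativity of the Yoneda product on $\Ext^\bullet$ combined with the parity of the degrees involved, just as in Mukai's case.

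The second key step is to check that this pointwise recipe actually assembles into an algebraic section of $\Lambda^2\Tt_{\M}$ over the smooth locus, rather than merely a collection of pairings with no coherence. Here one uses the existence of a (local, or étale-local, or quasi-universal) family of sheaves on $S\times \M_{S,\H}(\v_a)^{\sm}$: the relative $\Ext$-sheaves $\Ext^i_{\pi}(\mathcal{F},\mathcal{F}\otimes K_S)$ for the projection $\pi$ to $\M$ are coherent, the relative Yoneda product, relative trace, and contraction with the constant section $\Theta_S$ are all maps of coherent sheaves on $\M$, and restricting to a point recovers the fibrewise construction above. This shows $\Theta_\M$ is a well-defined algebraic (hence holomorphic) bivector field on $\M_{S,\H}(\v_a)^{\sm}$.

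The third, and genuinely substantive, step is to verify that $\Theta_\M$ satisfies the Jacobi identity, i.e.\ that its Schouten bracket $[\Theta_\M,\Theta_\M]$ vanishes, so that it is a bona fide Poisson structure, together with the parenthetical assertion that the associated $2$-form is \emph{closed} on the symplectic leaves. I expect this to be the main obstacle: unlike non-degeneracy, which is local linear algebra, integrability is a genuine identity among third-order data. The cleanest route is to reduce to the closedness statement for the Mukai-type $2$-form — one observes that $\Theta_\M$ is, leafwise, induced by a $d$-closed $2$-form coming from the same Yoneda/trace mechanism applied with a \emph{symplectic} form on an open part of $S$ (away from the vanishing divisor of $\Theta_S$, where $\Theta_S^{-1}$ is a holomorphic symplectic form), and then invoke the closedness of the Mukai form on that open locus together with a density/analytic-continuation argument to propagate closedness; the Jacobi identity for $\Theta_\M$ is then formally equivalent to this closedness. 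Alternatively, and this is presumably how Bottacin and Markman proceed, one works directly with the Čech or Dolbeault description of the Yoneda product and shows the relevant coboundary computing $[\Theta_\M,\Theta_\M]$ is exact. In the interest of brevity I would cite \cite{bottacin_1} and \cite{markman} for this verification, since it is precisely the content of their theorems, and simply record here the construction of $\Theta_\M$ and its skew-symmetry, which is all that the later sections of this paper require.
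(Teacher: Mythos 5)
The paper offers no proof of this statement at all: it is quoted verbatim from Bottacin and Markman, with the citation doing all the work. Your proposal is therefore not in conflict with anything in the paper, and the parts you do carry out — identifying the cotangent space at a simple sheaf with $\Ext^1_S(\Ff,\Ff\otimes K_S)$, defining the pairing by Yoneda product, trace, and contraction with $\Theta_S$, noting skew-symmetry from graded commutativity, and globalizing via relative $\Ext$-sheaves for a (quasi-)universal family — are correct and are exactly the content the later sections of the paper actually use.

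One caveat on the step you yourself flag as substantive. Your first suggested route to the Jacobi identity — pass to the open subset of $S$ where $\Theta_S$ is non-vanishing, where $\Theta_S^{-1}$ is a holomorphic symplectic form, and propagate closedness of the resulting Mukai-type form by density — does not work as stated. The moduli space $\M_{S,\H}(\v_a)$ does not localize over open subsets of $S$: a sheaf whose support meets the degeneracy divisor of $\Theta_S$ is not an object on the open part, and there is no morphism from $\M_{S,\H}(\v_a)$ (or a dense open of it) to a moduli space of sheaves on $S\setminus Z(\Theta_S)$ through which $\Theta_\M$ factors. This is precisely why Bottacin's and Markman's arguments are carried out directly with cocycle representatives of the Yoneda product (your second alternative), and why the resulting structure is only Poisson, not symplectic. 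Since you ultimately defer this verification to \cite{bottacin_1} and \cite{markman} — which is also all the paper does — the proposal stands, but the density argument should be struck rather than left as a plausible-looking option.
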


Note that a symplectic surface is also Poisson, and, in that case, the Bottacin--Markman form coincides with the Mukai form after identifying the tangent and cotangent spaces on the smooth locus of our moduli (via a trivialization of $K_S$).

\begin{proposition}[\cite{bottacin_1}] 
A smooth Poisson surface is either a symplectic surface ({\it i.e.} K3 or abelian) or a ruled surface. 
\end{proposition}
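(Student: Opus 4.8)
The plan is to reduce the statement to the Enriques--Kodaira classification of smooth projective surfaces, after one elementary observation. On a surface the Jacobi identity for a bivector is automatic, since the Schouten bracket $[\Theta_S,\Theta_S]$ is a section of $\bigwedge^3 TS$, which vanishes because $\dim S = 2$; thus a Poisson structure on $S$ is precisely the datum of a nonzero section $\Theta_S \in H^0(S, K_S^{-1})$, and the content of the proposition is that a smooth projective surface with $H^0(S, K_S^{-1}) \neq 0$ is either a K3 or an abelian surface, or is ruled. First I would introduce the divisor of zeros $D := (\Theta_S)_0$, an effective divisor with $\Oo_S(D) \cong K_S^{-1}$, i.e. an effective anticanonical divisor, and split the argument in two cases according to whether or not $D = 0$.

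If $D = 0$, then $\Theta_S$ is nowhere vanishing, so $K_S^{-1} \cong \Oo_S$ and hence $K_S \cong \Oo_S$. Such an $S$ is automatically minimal, since a $(-1)$-curve $E$ would satisfy $K_S \cdot E = -1 \neq 0$; and among minimal surfaces only the K3 and abelian surfaces have trivial canonical bundle, the Enriques and bielliptic surfaces being excluded because for them $K_S$ is a nontrivial torsion class. Thus in this case $S$ is a symplectic surface.

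If $D \neq 0$, I claim that all plurigenera of $S$ vanish. Indeed, for $m \geq 1$ one has $\Oo_S(mK_S) \cong \Oo_S(-mD)$, so a nonzero section of $mK_S$ would give an effective divisor $E$ linearly equivalent to $-mD$; then $E + mD$ is effective and linearly equivalent to $0$, hence equal to $0$ (an effective divisor linearly equivalent to zero on a projective variety vanishes), contradicting $E + mD \geq mD \neq 0$. Therefore $\kappa(S) = -\infty$, and by the Enriques--Kodaira classification $S$ is (birationally) ruled. This exhausts the two cases.

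I do not expect a substantial obstacle: the only genuine inputs are the two structural facts from surface classification --- trivial canonical bundle forces a K3 or abelian surface, Kodaira dimension $-\infty$ forces a ruled surface --- together with the elementary remark, used in both cases, that an effective divisor linearly equivalent to zero on a projective variety is zero.
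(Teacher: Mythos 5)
Your proof is correct. Note first that the paper itself gives no argument here: the proposition is quoted from Bottacin's paper, so there is no internal proof to compare against. Your argument is the standard one and all the steps check out: the Jacobi identity is indeed vacuous on a surface, so a Poisson structure is exactly a nonzero section of $K_S^{-1}$; the dichotomy on the anticanonical divisor $D=(\Theta_S)_0$ is the right one; triviality of $K_S$ forces minimality and then K3 or abelian by the $\kappa=0$ classification (Enriques and bielliptic being excluded by nontriviality of the torsion canonical class); and $D\neq 0$ kills all plurigenera via the observation that a nonzero effective divisor cannot be linearly equivalent to $0$ on a projective variety, whence $\kappa(S)=-\infty$ and $S$ is birationally ruled. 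The one point worth making explicit is the one you flag with your parenthesis ``(birationally)'': the conclusion of the classification is that $S$ is birationally ruled, not geometrically ruled in the sense of the $\PP(\Oo_C\oplus L)$ surfaces of Section 2.5 --- for instance $\PP^2$ and the del Pezzo surfaces appearing later in the paper are Poisson but are only rational, hence ruled only in the birational sense. That is also how the cited statement is meant, so your reading is the correct one.
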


On a smooth surface $S$, every pure dimension $1$ sheaf $\Ff$ has a locally free resolution of length $2$ and we define the {\it fitting support}, $\supp(\Ff)$, of $\Ff$ as the determinant associated to this resolution. Note that the first Chern class is the cohomology class of its fitting support 
\[
c_1(\Ff) = \left [ \supp(\Ff) \right ]. 
\]
The Mukai vector of a pure dimension $1$ sheaf $\Ff$ on a smooth surface with canonical sheaf $K_S$ takes the form
\[
\v(\Ff) = \left ( 0, \left [ \supp(\Ff) \right ], \chi(\Ff) + \frac{1}{2} \left [ K_S \right ] \cdot \left [ \supp(\Ff) \right ]  \right ).
\]
and the pairing is simply given by the intersection of the supports,
\[
\langle \v(\Ff) , \v(\Ff') \rangle = \v_2(\Ff) \cdot \v_2(\Ff') = \left [ \supp(\Ff) \right ] \cdot \left [ \supp(\Ff') \right ].
\]
Pick a smooth curve $C$ of genus $g_C$ in $S$ and consider the associated curve $nC$ which is non-reduced if $n>1$. For any integer $a$ we define the Mukai vector 
\begin{equation} \label{eq definition of v_a}
\v_a = \left ( 0 , [nC], a - n^2 (C \cdot C) \right ).
\end{equation}
After Le Poitier \cite{lepotier}, the corresponding moduli space can be equipped with a fibration to the Hilbert scheme classifying dimension $1$ subschemes of $S$ with first Chern class equal to the second component of the Mukai vector, 
\begin{equation} \label{eq LePoitier}
\morph{\M_{S,\H}(\v_a)}{\Hilb_S([nC])}{\Ff}{\supp(\Ff).}{}{\h_S}
\end{equation}
The fibre of $\h_S$ over the curve $A \in \Hilb_S([nC])$ is the Simpson compactified Jacobian classifying $\H|_A$-semistable pure dimension $1$ sheaves on $A$ of rank $1$ and degree $a$
\[
\h_S^{-1}(A) = \overline{\Jac}^{\, \H}_{A}(a).
\]
%

\begin{remark} \label{rm Beauville-Mukai}
The Picard group of a smooth projective K3 surface is discrete and embeds into its second integral cohomology space. Hence, in this case, the second component of the Mukai vector fixes the determinant and $\Hilb_S([nC])$ is the linear system $|nC|$. 
After the work of Beauville \cite{beauville_fibr} the fibres are Lagrangian with respect to the Mukai form $\Omega_\M$. Hence, this is an algebraic completely integrable system called the {\it Beauville--Mukai system}.
\end{remark}

In general $\Pic(S)$ does not embed into $H^2(S,\ZZ)$, and $\Hilb_{S}([nC])$ can be reducible and even non-connected. Choose a curve $A$ parametrized by $\Hilb_{S}([nC])$, and let us denote by $\{ A \}$ the connected component of the Hilbert scheme containing the curve $A$. Accordingly, we denote by
\[
\M_{S,\H}(\v_a,A) = \h^{-1}_S \left ( \{ A \} \right ),
\]
the associated connected component of the moduli.

\begin{remark} 
Our notation differs from that of Matteini in \cite{matteini}, where, given a smooth curve $A$, he defined $\{ A \}$ to be the irreducible component of $\Hilb_{S}([nC])$ containing $A$. Since we will allow $A$ to be singular, and even non-reduced, $A$ may be contained in several irreducible components, although it determines uniquely its connected component. 
\end{remark}

\begin{remark} \label{rm vanishing irregularity}
When the irregularity of the surface $S$ vanishes, {\it i.e.} $h^1(S, \Oo_S) = 0$, we have that $\Hilb_{S}([nC])$ is a discrete union of connected components. In this case, $\{ A_i \} = |A_i|$, each being a linear system.
\end{remark}

Let us denote by $\Aa$ the restriction to $\{ A \} \times S$ of the universal subscheme associated to $\Hilb_S([nC])$. Note that $\Aa$ is a family of planar curves parametrized by $\{ A \}$. The polarization $\H$ on $S$ provides a relative polarization on $\Aa$ that we still denote by $\H$. By means of the Le Poitier fibration \eqref{eq LePoitier}, one can identify the component $\M_{S,\H}(\v_a,A)$ of our moduli space of pure dimension $1$ sheaves on $S$ with the relative compactified Jacobian over $\Aa$ of relative torsion free sheaves of rank $1$ and degree $a$,
\[
\M_{S,\H}(\v_a,A) \cong \overline{\Jac}^{\, \H}_{\Aa / \{ A \}}(a).
\]
If $\{ A \}^\sm$ denotes the open set of smooth curves, and $\Aa^\sm$ the restriction of $\Aa$ there, one can has the following identification of open subsets
\[
\left . \M_{S,\H}(\v_a,A) \right |_{\{ A \}^\sm} \cong \Jac_{\Aa^\sm / \{ A \}^\sm}(a),
\]
contained in the smooth locus. 

\begin{remark} \label{rm stability on the smooth locus}
We have dropped the polarization from the notation on the Jacobian as pure dimension $1$ sheaves whose restriction to its support have rank $1$, are always stable, regardless of the choice of polarization. 
\end{remark}



\subsection{Natural involutions from involutions on the surface}
\label{sc natural involutions}

In this section we study the involutions on the moduli space naturally induced by pull-back of involutions on the surface.

Suppose that our smooth projective surface $S$ is equipped with an involution 
\[
\zeta_S : S \to S.
\]

Let $C$ be a smooth projective curve in $S$ which is preserved under $\zeta_S$ and consider a Mukai vector $\v_a$ of the form specified in \eqref{eq definition of v_a} built out the $\zeta_S$-invariant curve $C$. Then, one trivially has that $\v_a$ is $\zeta_S$-invariant as well, so one can construct a birrational involution on the moduli space,
\[
\birrat{\M_{S,\H}(\v_a)}{\M_{S,\H}(\v_a)}{\Ff}{\zeta_S^*\Ff,}{}{\widehat{\zeta}_S}
\]
that we call {\it natural involution} associated to $\zeta_S$. We denote the closure of its fixed point locus by
\[
\N_{S,\H}(\v_a) := \overline{\Fix(\widehat{\zeta}_S)}.
\]
If, further, the polarization $\H$ is $\zeta_S$-invariant, $\widehat{\zeta}_S$ is a biregular morphism  and its fixed locus is already closed. The support morphism \eqref{eq LePoitier} restricts to the locus $\Hilb_S([nC])^{\widehat{\zeta}_C}$ of curves preserved by the involution, which may have several components.

Note that every curve $A'$ in $\Hilb_S([nC])^{\widehat{\zeta}_C}$ inherits an involution 
\[
\zeta_{A'} : A' \to A',
\]
inducing another birrational involution on the compactified Jacobian,
\begin{equation} \label{eq hatimath C'}
\birrat{\overline{\Jac}^{\, \H}_{A'}(a)}{\overline{\Jac}^{\, \H}_{A'}(a)}{\Ee}{\zeta_{A'}^*\Ee.}{}{\widehat{\zeta}_{A'}}
\end{equation}
As the fibres of \eqref{eq LePoitier} are identified with the corresponding compactified Jacobian, one has
\[
\h_S^{-1}(A') \cap \N_{S,\H}(\v_a) = \overline{\Fix(\widehat{\zeta}_{A'})}.
\]

It has been shown (see \cite[Lemma 9]{baraglia&schaposnik} or \cite{GYM} for instance) that the involution induced on the moduli space has the same behaviour under the symplectic form as the starting one. The generalization to the case of Poisson surfaces is straight-forward. 

\begin{proposition} \label{pr natural involutions preserve behaviour under Poisson structure}
Consider a Poisson surface $S$ and suppose it is equipped with a Poisson involution $\zeta_S^+$ ({\it resp.} an anti-Poisson involution $\zeta_S^-$). Then, the natural (birrational) involution $\widehat{\zeta}_S^+$ is also Poisson ({\it resp.} $\widehat{\zeta}_S^-$ is anti-Poisson).
\end{proposition}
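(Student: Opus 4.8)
The plan is to reduce the statement to the known symplectic case (\cite[Lemma 9]{baraglia&schaposnik}, \cite{GYM}) by a local trivialization argument, and to track carefully how the Poisson bi-vector $\Theta_S$ transforms under $\zeta_S^{\pm}$. Recall from Theorem \ref{tm Bottaccin-Markman Poisson str} that $\Theta_\M$ at a point $[\Ff]$ in the smooth locus is obtained by composing the Yoneda product on $\Ext^1_S(\Ff,\Ff\otimes K_S)$ with the trace map into $H^2(S,K_S^2)$ and then pairing with $\Theta_S \in H^0(S,K_S^{-1})$. The natural (birational) involution $\widehat{\zeta}_S$ sends $\Ff$ to $\zeta_S^*\Ff$, and pull-back by the isomorphism $\zeta_S$ induces an isomorphism $\Ext^1_S(\Ff,\Ff\otimes K_S) \xrightarrow{\ \sim\ } \Ext^1_S(\zeta_S^*\Ff, \zeta_S^*\Ff \otimes \zeta_S^*K_S)$, and one canonically identifies $\zeta_S^*K_S \cong K_S$ via the differential of $\zeta_S$ (i.e. $\zeta_S^*$ acting on top forms). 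The point is then that the whole Bottacin--Markman recipe is natural with respect to the automorphism $\zeta_S$ of $S$: the Yoneda product and the trace map commute with $\zeta_S^*$, so that $(\widehat{\zeta}_S)^*\Theta_\M$ is computed by the same diagram but with $\Theta_S$ replaced by $\zeta_S^*\Theta_S$.

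Concretely, I would first record the functoriality of each arrow in the diagram of Theorem \ref{tm Bottaccin-Markman Poisson str} under an automorphism of $S$: (i) $\zeta_S^*$ intertwines the Yoneda products $\Ext^1(\Ff,\Ff\otimes K_S)\wedge\Ext^1(\Ff,\Ff\otimes K_S)\to\Ext^2(\Ff,\Ff\otimes K_S^2)$ on source and target, because the Yoneda product is defined purely by composition of extension classes, which any exact autoequivalence (here $\zeta_S^*$) respects; (ii) the trace $\Ext^2(\Ff,\Ff\otimes K_S^2)\to H^2(S,K_S^2)$ commutes with $\zeta_S^*$ since the trace is functorial for the pull-back of the underlying surface; (iii) the pairing $\langle\,\cdot\,,\Theta_S\rangle : H^2(S,K_S^2)\to H^2(S,K_S)\cong\CC$ satisfies $\langle\zeta_S^*\alpha,\Theta_S\rangle = \langle\alpha, (\zeta_S^{-1})^*\Theta_S\rangle$ up to the canonical scalar coming from $H^2(S,K_S)\cong\CC$ being $\zeta_S$-equivariantly trivialized only up to a sign (the sign being $+1$ since $\zeta_S$ is an involution preserving orientation, acting on the one-dimensional space $H^0(S,K_S)$, hence on its dual $H^2(S,\Oo_S)$, by $\pm1$; but one checks the relevant composite gives exactly the sign by which $\zeta_S$ acts on $H^0(S,K_S)$ — which is trivial). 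Assembling (i)--(iii) yields
\[
(\widehat{\zeta}_S)^*\Theta_\M \;=\; \Theta_\M^{(\zeta_S^*\Theta_S)},
\]
where the right-hand side denotes the Bottacin--Markman structure built from the bi-vector $\zeta_S^*\Theta_S$ in place of $\Theta_S$. If $\zeta_S$ is Poisson, $\zeta_S^*\Theta_S = \Theta_S$ and hence $(\widehat{\zeta}_S^+)^*\Theta_\M = \Theta_\M$; if $\zeta_S$ is anti-Poisson, $\zeta_S^*\Theta_S = -\Theta_S$ and hence $(\widehat{\zeta}_S^-)^*\Theta_\M = -\Theta_\M$. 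Since $\widehat{\zeta}_S$ is a biregular automorphism of the smooth locus (or at worst birational, in which case the identity of rational $2$-vector fields on a dense open set suffices, as a Poisson structure is determined by its restriction to any dense open), this proves the claim.

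The main obstacle I anticipate is bookkeeping the various canonical identifications — in particular making the identification $\zeta_S^* K_S \cong K_S$ explicit and checking that the induced map on $H^2(S,K_S)\cong\CC$ is the identity (not merely a nonzero scalar), so that the dichotomy is exactly $\Theta_\M \mapsto \pm\Theta_\M$ rather than $\Theta_\M \mapsto c^{\pm 1}\Theta_\M$ for some ambiguous constant. This is handled by noting that $H^2(S,K_S)$ is canonically $\CC$ (Serre duality pairs it with $H^0(S,\Oo_S)=\CC$, on which $\zeta_S^*$ is the identity), so the trivialization is genuinely $\zeta_S$-equivariant. A minor additional point is to justify working birationally when $\H$ is not $\zeta_S$-invariant: one restricts to the $\zeta_S$-stable dense open locus where $\widehat{\zeta}_S$ is defined and an isomorphism, and uses that a closed Poisson bi-vector on a smooth variety is determined by its restriction to any nonempty open subset. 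With these normalizations in place, the argument above is the direct generalization of the symplectic computation in \cite[Lemma 9]{baraglia&schaposnik} and \cite{GYM}, to which it specializes when $K_S$ is trivial and $\Theta_S$ corresponds to $\Omega_S^{-1}$.
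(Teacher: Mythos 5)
Your proposal is correct and follows essentially the same route as the paper's proof: one checks that $\zeta_S^*$ commutes with the Yoneda product, the trace map, and Serre duality, and then observes that the only step sensitive to the sign is the pairing with $\Theta_S$, which commutes with $\zeta_S^+$ and anticommutes with $\zeta_S^-$. Your additional care with the canonical trivialization of $H^2(S,K_S)\cong\CC$ and with the birational issue when $\H$ is not $\zeta_S$-invariant is consistent with, and slightly more explicit than, the argument given in the paper (which defers these points to \cite[Theorem 3.4]{GYM}).
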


\begin{proof}
We have to prove that whenever our involution $\zeta_S^\pm$ preserves the (powers of the) canonical bundle $K_S$ and commutes or anti-commutes with $\Theta_S$, so does $\widehat{\zeta}_S^\pm$ with respect to $\Theta_\M$ over the open subset $U \subset \M_S^H(\v_a)$ of smooth points where $\widehat{\zeta}_S^\pm$ is biregular. 

The proof is very similar to that of \cite[Theorem 3.4]{GYM}. Suppose that $\Ff$ represents a point in $U$. Then, $(\zeta_S^\pm)^*\Ff$ lies in $U$ too. For each $i \geq 0$, consider the induced morphism in cohomology $R^i(\zeta_S^\pm)^*$, and note that it commutes with the Yoneda product. Also, one has that $R^i(\zeta_S^\pm)^*$ commutes with the trace morphism and Serre duality, appearing in the definition of the Bottacin--Markman form. The Poisson involution $\zeta_S^+$ commutes with the remaining morphism in that definition, $\langle \cdot , \Theta_S \rangle$, while the anti-Poisson involution $\zeta_S^-$ anticommutes with it. Then, we can see that $\zeta_S^+$ commutes with the composition of all these morphisms, which define the Poisson form $\Theta_\M$ while $\zeta_S^-$ anticommutes with it. Hence, the proof is complete.
\end{proof}

Suppose now that the quotient surface $T := \quotient{S}{\zeta_S}$ is smooth and denote the quotient map by 
\[
\q_S : S \to T.
\]
Suppose as well that  
\[
C := D \times_T S  
\]
for some smooth curve $D \subset T$. We have already seen that the Mukai vector $\v_a$ is $\zeta_S$-invariant as so is $C$. 

\begin{remark} \label{rm v_a eta-invariant}
A Mukai vector of the form $\v_{2b}$ is the pull-back of the Mukai vector in $T$,
\[
\w_b := \left ( 0, [nD], b - n^2 (D \cdot D) \right ).
\]
\end{remark}

Pick a curve $B \subset T$ in the class $[nD]$ and consider its lift $A = B \times_T S$ to $S$, lying in the class $[nC]$. Recall that we denote by $\{ B \}$ and $\{ A \}$ the connected components of $\Hilb_T([nD])$ and $\Hilb_S([nC])$ containing $B$ and $A$, respectively. Note that $\widehat{\zeta}_S$ restricts to $\{ A \}$ although the fixed lous $\{ A \}^{\widehat{\zeta}_S}$ might be disconnected. We shall denote by $\q_S^*\{B \}$ the connected subset of $\{ A \}^{\widehat{\zeta}_S}$ given by those curves obtained by lifting curves in $\{B \}$. In principle, $\q_S^*\{B \}$ is not a connected component of $\{ A \}^{\widehat{\zeta}_S}$, only a union of some irreducible components.

\begin{remark}
We have seen Remark \ref{rm vanishing irregularity} whenever the surface $S$ has vanishing irregularity, the $\{ A_i \}$ are the linear systems $|A_i|$. In this case, the locus $| A_i |^{\widetilde{\zeta}_S}$ of curves preseved by $\zeta_S$ is disconnected and amounts to the union of $|A_i|^{\widetilde{\zeta}_S}_+$ and $|A_i|^{\widetilde{\zeta}_S}_-$, each being the projectivization of the $+1$ and $-1$ eigenspaces for the action of $\widehat{\zeta}_S$ on $H^0(S, \Oo_S(A_i))$. In this context, 
\[
|A|^{\widetilde{\zeta}_S}_+ = \q_S^*|B|. 
\]
\end{remark}

We consider the subvariety
\begin{equation} \label{eq N over B}
\N_{S,\H}(\v,B) := \N_{S,\H}(\v_a) \cap \h^{-1}_S \left ( \q_S^*\{ B \} \right ).
\end{equation}
By restriction of \eqref{eq LePoitier}, $\N_{S,\H}(\v,B)$ is equipped with a fibration,
\begin{equation} \label{eq support morphism for N}
\xymatrix{
\N_{S,\H}(\v,B) \, \ar[d]_{\nu} \ar@{^(->}[rr] & & \M_{S,\H}(\v, A) \ar[d]^{\h_S}  \cong \overline{\Jac}^{\, \H}_{\Aa / \{ A \}}(a)
\\
\q_S^*\{ B \}\,  \ar@{^(->}[rr] & & \{ A \},
}
\end{equation}
whose fibres are
\[
\nu^{-1} (A') = \overline{\Fix(\widehat{\zeta}_{A'})}.
\]

Consider over $T$ the moduli space $\M_{T,\I}(\w_b)$ of sheaves with a Mukai vector $\w_b$ and a polarization $I$. As before, we denote by $\{ B \}$ the connected component of the Hilbert scheme in $T$ containing $B$, and by $\Bb$ the family of curves parametrized by $\{ B \}$ that we obtain from restricting there the universal subscheme associated to $\Hilb([nD])$. Recall that one has the identification 
\[
\M_{T,\I}(\w_b, B) \cong \overline{\Jac}^{\, \I}_{\Bb/\{ B \}}(b),
\]
Denote the pull-back of the polarization by
\[
\widetilde{\I} = \q_S^*\I.
\]
For these choices, the involution $\widehat{\zeta}_S$ is biregular and the image of the pull-back morphism under the corresponding quotient map lies in its fixed locus,
\begin{equation} \label{eq definition of hatq}
\morph{\M_{T,\I}(\w_b, B) \cong \overline{\Jac}^{\, \I}_{\Bb/\{ B \}}(b)}{\N_{S, \widetilde{\I}}(\v_{2b}, B) \subset \M_{S, \widetilde{\I}}(\v_{2b}, B) \cong \overline{\Jac}^{\, \widetilde{\I}}_{\Aa/\{ A \}}(2b)}{\Ff}{\q_S^*\Ff.}{}{\widehat{\q}_S}
\end{equation}
Given the curves $B \subset T$ and $A \subset S$ as before, denote by 
\[
\q_A : A \to B,
\]
the projection induced by $\q_S$. Observe that the restriction of \eqref{eq definition of hatq} to the Hitchin fibres gives,
\[
\widehat{\q}_A : \h_T^{-1}(B) \cong \overline{\Jac}^{\, \I}_B(b) \to \nu^{-1}(A) \subset \h_S^{-1}(A) \cong \overline{\Jac}^{\, \widetilde{\I}}_A(2b),
\]
where $\widehat{\q}_A := \q_A^*$.

Let us denote by $\{ B \}^\ssl$ the locus of $\{ B \}$ parametrizing smooth curves $B'$ whose lift $A' = B' \times_T S \subset \{ A \}$ is also smooth. On the contrary, in particular when $\zeta_S$ is without fixed points, $\q_{\Aa^\ssl}$ is unramified. The notation stands for {\it smooth and smooth lift}. Recall the family of curves $\Bb \to \{ B \}$ and denote by $\Bb^\ssl$ its restriction to $\{ B \}^\ssl$. Set also $\Aa^\ssl$ to be the restriction to $\q_S^*\{ B \}^{\ssl}$ of the family of curves $\Aa \to \{ A \}$, and observe that it comes equipped with a projection induced by $\q_S$,
\begin{equation} \label{eq definition of q_Aa}
\xymatrix{
\Aa^\ssl \ar[rr]^{\q_{\Aa^\ssl}}\ar[rd] & & \Bb^\ssl \ar[ld]
\\
& \{ B \}^\ssl. &
}
\end{equation}
One can see that $\q_{\Aa^\ssl}$ is ramified whenever the intersection of the branching locus $\Delta$ of $\q_S$ with generic elements of $\{ B \}$ is non-empty. Let us denote by $\widehat{\q}_{\Aa^\ssl}$ the restriction of $\widehat{\q}_S$ to $\q_S^*\{ B \}^\ssl$. Recall from Remark \ref{rm stability on the smooth locus}, that the sheaves supported on smooth (hence irreducible) curves are always stable,
\begin{equation} \label{eq definition hatq_Aa}
\widehat{\q}_{\Aa^\ssl} : \Jac_{\Bb^\ssl/\{ B \}^\ssl}(b) \longrightarrow \N_{S,\H}(\v_{2b}, B) \subset \Jac_{\Aa^\ssl/\q_S^*\{ B \}^\ssl}(2b).
\end{equation}
This provides a description of an open subset of $\N_{S,\H}(\v_{2b}, B)$.

\begin{proposition} \label{pr q embedding on ssl}
Suppose that $\q_{\Aa^\ssl}$ is ramified. Then 
\begin{equation} \label{eq q embedding on ssl}
\left . \N_{S,\H}(\v_{2b}, B) \right |_{\{ B \}^\ssl} \cong \Jac_{\Bb^{\, \ssl} / \{ B \}^\ssl}(b) 
\end{equation}
and \eqref{eq definition of hatq} is, generically, a 1:1 morphism.

If, on the contrary, $\q_{\Aa^\ssl}$ is an unramified $2:1$ cover associated to the relative $2$-torsion line bundle $\Ll \to \Bb^\ssl$, one has that $\widehat{\q}_{\Aa^\ssl}$ factors through the quotient by $\ZZ_2$ acting by tensor product with $\Ll$, giving
\begin{equation} \label{eq q factors through Z_2 on ssl}
\left . \N_{S,\H}(\v_{2b}, B) \right |_{\{ B \}^\ssl} \cong \quotient{\Jac_{\Bb^{\, \ssl} / \{ B \}^\ssl}(b)}{\ZZ_2} 
\end{equation}
and \eqref{eq definition of hatq} is, generically, a 2:1 morphism.
\end{proposition}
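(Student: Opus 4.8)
\emph{Strategy.} Both assertions are fibrewise over the base $\{B\}^{\ssl}$, and since the relative pull-back \eqref{eq definition hatq_Aa} is compatible with base change it suffices to analyse a single fibre: fix $B'\in\{B\}^{\ssl}$ with lift $A'=B'\times_T S$ (both smooth projective curves), write $\pi:=\q_{A'}\colon A'\to B'$ for the induced double cover, and study $\pi^*\colon\Jac^b(B')\to\Jac^{2b}(A')$. The input from the theory of double covers of smooth curves is classical: $\ker(\pi^*)$ is trivial when $\pi$ is ramified, and equals $\{\Oo_{B'},\Ll|_{B'}\}\cong\ZZ_2$ when $\pi$ is étale, where $\Ll|_{B'}$ is the $2$-torsion line bundle with $\pi_*\Oo_{A'}\cong\Oo_{B'}\oplus\Ll|_{B'}$, equivalently $\pi^*\Ll|_{B'}\cong\Oo_{A'}$; moreover on tangent spaces $\pi^*$ is the inclusion of $H^1(B',\Oo_{B'})$ as the $\zeta_{A'}$-invariant summand of $H^1(A',\Oo_{A'})=H^1(B',\pi_*\Oo_{A'})$, hence it has injective differential everywhere, and its image is a connected component of $\Fix(\widehat{\zeta}_{A'})$ (after translation to degree $2b$).

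\emph{Ramified case.} If $\q_{\Aa^{\ssl}}$ is ramified then every fibre $\pi=\q_{A'}$ is a ramified (hence connected) double cover, so $\pi^*$ is injective; being proper, injective on points and injective on tangent spaces, it is a closed immersion $\Jac^b(B')\hookrightarrow\Jac^{2b}(A')$. These immersions glue over $\{B\}^{\ssl}$ into a closed immersion of $\Jac_{\Bb^{\ssl}/\{B\}^{\ssl}}(b)$ into $\Jac_{\Aa^{\ssl}/\q_S^*\{B\}^{\ssl}}(2b)$, and, by the identification of the fibrewise image with the relevant component of $\Fix(\widehat{\zeta}_{A'})$ together with \eqref{eq N over B} and the connectedness of $\q_S^*\{B\}$, its image is $\N_{S,\H}(\v_{2b},B)|_{\{B\}^{\ssl}}$; this gives \eqref{eq q embedding on ssl}. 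In particular $\widehat{\q}_S$ of \eqref{eq definition of hatq} restricts over the dense open $\{B\}^{\ssl}$ to an isomorphism onto its image, so it is generically $1{:}1$.

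\emph{Étale case.} Here $\q_{\Aa^{\ssl}}\to\Bb^{\ssl}$ is an étale double cover classified by the relative $2$-torsion line bundle $\Ll$, so $\q_{\Aa^{\ssl}}^*\Ll\cong\Oo_{\Aa^{\ssl}}$ fibrewise; thus $\q_{A'}^*(\Ee\otimes\Ll|_{B'})\cong\q_{A'}^*\Ee$, i.e.\ $\widehat{\q}_{\Aa^{\ssl}}$ is invariant under the free $\ZZ_2$-action $\Ee\mapsto\Ee\otimes\Ll$ on $\Jac_{\Bb^{\ssl}/\{B\}^{\ssl}}(b)$ and so factors through the quotient. The induced map is fibrewise injective, since $\q_{A'}^*\Ee\cong\q_{A'}^*\Ee'$ forces $\Ee\otimes(\Ee')^{-1}\in\ker\pi^*=\{\Oo_{B'},\Ll|_{B'}\}$, i.e.\ $\Ee\cong\Ee'$ in $\Jac^b(B')/\ZZ_2$; and it has injective differential because the quotient by the finite group $\ZZ_2$ is étale while $\pi^*$ is already an immersion on tangent spaces. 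Hence it is a closed immersion $\Jac_{\Bb^{\ssl}/\{B\}^{\ssl}}(b)/\ZZ_2\hookrightarrow\Jac_{\Aa^{\ssl}/\q_S^*\{B\}^{\ssl}}(2b)$ with image $\N_{S,\H}(\v_{2b},B)|_{\{B\}^{\ssl}}$, by the same reasoning as before; this is \eqref{eq q factors through Z_2 on ssl}, and $\widehat{\q}_S$ is generically $2{:}1$ since fibrewise $\q_{A'}^*$ is $2{:}1$ onto its image.

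\emph{Main obstacle.} The one genuinely delicate point is the identification of the image of the relative pull-back with $\N_{S,\H}(\v_{2b},B)|_{\{B\}^{\ssl}}$: one must verify that $\mathrm{im}(\q_{A'}^*)$ is exactly the connected component of $\Fix(\widehat{\zeta}_{A'})$ singled out by the construction of $\q_S^*\{B\}$ (the full fixed locus of $\widehat{\zeta}_{A'}$ may be disconnected) and that these components are patched coherently over the family. The kernel computation, the closed-immersion criteria and the $\otimes\Ll$-descent are classical or formal, and the passage from the fibrewise statements to families is routine given the universal objects in \eqref{eq definition of q_Aa} and \eqref{eq definition hatq_Aa}.
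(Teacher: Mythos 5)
Your argument is correct and follows essentially the same route as the paper: a fibrewise analysis of the pull-back $\q_{A'}^*\colon\Jac^b(B')\to\Jac^{2b}(A')$, with injectivity coming from ramification in the first case and the kernel $\{\Oo_{B'},\Ll|_{B'}\}$ giving the $\ZZ_2$-quotient in the \'etale case. The one point you flag as the ``main obstacle'' --- identifying $\image(\q_{A'}^*)$ with the whole of $\N_{S,\H}(\v_{2b},B)|_{\{B\}^{\ssl}}$ fibrewise, rather than merely with one connected component of $\Fix(\widehat{\zeta}_{A'})$ --- is exactly the step the paper disposes of by invoking Kempf's descent lemma (recalled in Remark \ref{rm descent}): an invariant line bundle on $A'$ lies in the image of $\q_{A'}^*$ precisely when it admits a linearization acting trivially on the fibres over the fixed points, which is what identifies the descent locus with the fixed locus here. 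So you have correctly isolated the only non-formal ingredient; supplying that descent argument (or the equivalent component count for $\Fix(\widehat{\zeta}_{A'})$ from the classical theory of Prym varieties) would complete your proof, and everything else you write matches the paper's reasoning in more detail than the paper itself gives.
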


\begin{proof}
In the first case, by Kempf descent lemma, $\image \left (\widehat{\q}_{\Aa^\ssl} \right )$ coincides with the left-hand side of \eqref{eq q embedding on ssl}. Since, by hypothesis, $\q_{\Aa^\ssl}$ is ramified, $\widehat{\q}_{\Aa^\ssl}$ is an embedding. As we find ourselves within the smooth locus of $\M^{\H}_S(\v_{2b},A)$, $\widehat{\q}_{\Aa^\ssl}$ provides an isomorphism of its source with its image. As $\{ B \}^\ssl$ is a dense open subset of $\{ B \}$, this is, generically, $1:1$ morphism.

The second case follows since the pull-back of $\Ll$ is trivial over $\Aa^\ssl$.
\end{proof}

\subsection{The dualizing involution and Prymian fibrations}
\label{sc Prymian fibrations}

In this section we study the involution on the moduli space of pure dimension $1$ sheaves induced by dualizing the restriction of a sheaf to its fitting support. We also study the composition of this involution with a natural involution, and study their fixed loci, which is generically a fibration by Prym varieties.

Given a Mukai vector $\v_a$ on $S$ as defined in \eqref{eq definition of v_a}, whose second component is the class $[nC]$, let us choose a line bundle $J \in \Pic(S)$ satisfying whose intersection with the second component of the Mukai satisfies
\begin{equation} \label{eq condition for J and v}
\frac{2a}{n} =  J \cdot C + n (C \cdot C).
\end{equation}
Note that not every value of $a$ and $n$ allow for such a $J$. It is discussed in \cite[Sections 3.3 and 3.5]{ASF}, that the moduli space $\M_{S,\H}(\v_a)$ is equipped with an involution
\begin{equation} \label{eq eta}
\morph{\M_{S,\H}(\v_a)}{\M_{S,\H}(\v_a)}{\Ff}{\Ee \! xt^1_S(\Ff, J).}{}{\xi_{J,S}}
\end{equation}
Pick $A = \supp(\Ff)$, which is Gorenstein with canonical line bundle $K_S(A)|_{A}$, Grothendieck--Verdier duality implies that, 
\begin{equation} \label{eq fibrewise description of xi}
\Ee \! xt^1_S(\Ff, J) \cong \Hh \! om_{A}(\Ff|_{A}, J(A)|_{A}).
\end{equation}
Since $A$ belongs to the class $[nC]$, equation \eqref{eq condition for J and v} implies $2a = \deg(J(A)|_{A})$ and this ensures that $\xi_{J,S}$ preserves the Mukai vector. Observe that $\xi_{J,S}^2 = \id_M$ holds as every pure dimension $1$ sheaf in a smooth surface is reflexive. Also, $\xi_{J,S}$ restricts to the fibres of the Mukai fibration, $\h^{-1}_S(A) \cong \overline{\Jac}^{\, a}_{\H}(A)$, giving the {\it dualizing} involution composed with an appropriate tensor product,
\begin{equation} \label{eq eta_J C'}
\morph{\overline{\Jac}^{\, \H}_A(a)}{\overline{\Jac}^{\, \H}_A(a)}{\Ee}{\Ee^\vee \otimes J(A)|_{A},}{}{\xi_{J,A}}
\end{equation}
where we recall that the dual of a rank $1$ torsion-free sheaf on a Gorenstein curve is always well defined and $\H$-stability is preserved.

\begin{remark} \label{rm choice of J_0 and v_0}
Condition \eqref{eq condition for J and v} is always satisfied when we choose $\v_0$ and $J_0 = \Oo_S(-nC)$, for instance. In that case $\xi_{J_0, A}$ is just the dualizing involution $\Ee \mapsto \Ee^\vee$. 
\end{remark}

Recall that we have chosen the Mukai vector $\v_a$ to be $\zeta_S$-invariant. If further the line bundle $J$ on $S$ satisfying \eqref{eq condition for J and v} is $\zeta_S$-invariant too, $\widehat{\zeta}_S$ and $\xi_{J,S}$ commute and, following \cite{ASF}, we consider their composition 
\[
\lambda_{J,S} := \widehat{\zeta}_S \circ \xi_{J,S} : 
\M_{S,\H}(\v_a) \dashrightarrow \M_{S,\H}(\v_a),
\]
which defines a birrational involution on $\M_{S,\H}(\v_a)$. We denote the closure of its fixed locus by
\[
\P_{S,\H}(\v_a,J) := \overline{\Fix(\lambda_{J,S})}.
\]

We now study the relation of these involutions with the Poisson (and symplectic) structure. It was proved in \cite[Proposition 3.11]{ASF} that $\xi_{J,S}$ is anti-symplectic under the assumptions of $S$ being a K3 surface, $\v_a$ a Mukai vector with $n=1$ and $J = \Oo_S(-C)$. 

\begin{proposition} 
\label{pr eta antisymplectic}
Given a Poisson surface $S$, for any choice of Mukai vector $\v_a$ and $J \in \Pic(S)$ satisfying \eqref{eq condition for J and v}, the involution $\xi_{J,S}$ is anti-Poisson with respect to the Bottaccin--Markman Poisson form $\Theta_\M$ on $\M_{S,\H}(\v_a)$,
\[
\xi_{J,S}^* \, \Theta_\M = -\Theta_\M.
\]
\end{proposition}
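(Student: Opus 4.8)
The plan is to mimic the argument behind Proposition \ref{pr natural involutions preserve behaviour under Poisson structure}, tracking how the involution $\xi_{J,S}$ interacts with each of the four morphisms appearing in the definition of the Bottacin--Markman form in Theorem \ref{tm Bottaccin-Markman Poisson str}. Working over the open subset $U \subset \M_{S,\H}(\v_a)$ of smooth points where $\xi_{J,S}$ is biregular, fix $\Ff$ representing a point of $U$ and set $\Gg := \Ee\!xt^1_S(\Ff,J)$. The key observation is that the functor $\Ee\!xt^\bullet_S(-,J)$, applied fibrewise as in \eqref{eq fibrewise description of xi} via Grothendieck--Verdier duality, induces on the deformation spaces a canonical isomorphism $\Ext^1_S(\Ff,\Ff\otimes K_S) \cong \Ext^1_S(\Gg,\Gg\otimes K_S)$; this is precisely the derivative of $\xi_{J,S}$ at $[\Ff]$, and the heart of the proof is to identify its effect on the Yoneda pairing.

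First I would record that dualizing reverses the order of Yoneda composition: under the identification $\Ext^i_S(\Ff,\Ff\otimes K_S^k)\cong \Ext^i_S(\Gg\otimes K_S^{-k},\Gg)$ (or the appropriate twisted variant), the product $e_1\circ e_2$ is sent to $\pm\, e_2'\circ e_1'$. Since we pair a $1$-extension with a $1$-extension and land in $\Ext^2$, the transposition introduces a sign $(-1)^{1\cdot 1}=-1$ on the cup product $\Ext^1\wedge\Ext^1 \to \Ext^2$. Next, the trace map $\tr$ is compatible with $\Ee\!xt$-duality (trace commutes with taking $\Rr\Hh om(-,J)$ up to the canonical identifications), so it contributes no sign. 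Finally, the pairing $\langle\,\cdot\,,\Theta_S\rangle: H^2(S,K_S^2)\to H^2(S,K_S)$ and the Serre-duality identification with $\CC$ are intrinsic to $S$ and untouched by an operation performed along the fibres, hence contribute no sign either. Composing, $\xi_{J,S}^*\Theta_\M = -\Theta_\M$ on $U$, and by density of $U$ and continuity (closedness) of $\Theta_\M$ this extends to the whole smooth locus.

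The main obstacle will be step two: making the sign in the Yoneda/duality interaction rigorous, i.e. checking that $\Ee\!xt^1_S(-,J)$ really does send $e_1\circ e_2 \mapsto -\,(\text{dual of }e_2)\circ(\text{dual of }e_1)$ with the correct twists, and that this is compatible with the Grothendieck--Verdier identification \eqref{eq fibrewise description of xi} uniformly in the fibre. One clean way to organise this is to work fibrewise on the Gorenstein curve $A=\supp(\Ff)$: there $\xi_{J,A}$ of \eqref{eq eta_J C'} is $\Ee\mapsto\Ee^\vee\otimes J(A)|_A$, an antiequivalence on $\mathrm{Coh}(A)$, and the induced map on $\Ext^1_A(\Ee,\Ee\otimes K_A)$ (using $K_A=K_S(A)|_A$) is minus the transpose; a short homological-algebra lemma on the behaviour of Yoneda products under contravariant duality then yields the sign. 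Granting Proposition \ref{pr eta antisymplectic}, the behaviour of $\lambda_{J,S}=\widehat{\zeta}_S\circ\xi_{J,S}$ follows by combining it with Proposition \ref{pr natural involutions preserve behaviour under Poisson structure}: $\lambda_{J,S}$ is Poisson when $\zeta_S$ is anti-Poisson and anti-Poisson when $\zeta_S$ is Poisson.
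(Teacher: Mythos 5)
Your proposal is correct and follows essentially the same route as the paper, whose proof simply asserts that the argument of \cite[Proposition 3.11]{ASF} extends to the Poisson case with minor changes; that argument is exactly the sign-tracking through the Yoneda product, the trace, and the pairing with $\Theta_S$ that you spell out. The one point to phrase carefully is that the minus sign arises from duality reversing the order of Yoneda composition combined with the graded commutativity $\tr (e_1 \circ e_2) = -\tr (e_2 \circ e_1)$ for degree-one classes, rather than from a sign on the cup product itself.
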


\begin{proof}
The proof of \cite[Proposition 3.11]{ASF} extends to the Poisson case with minor changes.
\end{proof}

As an immediate consequence of Propositions \ref{pr natural involutions preserve behaviour under Poisson structure} and \ref{pr eta antisymplectic} one derives the following corollary which will be used in Section \ref{sc degeneration of ASF systems}.

\begin{corollary} \label{co lambda is Poisson}
Let $S$ be a Poisson surface and $\zeta_S^+$ a Poisson involution on it ({\it resp.} $\zeta_S^-$ an anti-Poisson involution). For any choice of Mukai vector $\v_a$ and a $\zeta_S^+$-invariant ({\it resp.} $\zeta_S^-$-invariant) $J \in \Pic(S)$ satisfying \eqref{eq condition for J and v}, the involution $\lambda_{J,S}^+ = \widehat{\zeta}_S^+ \circ \xi_{J,S}$ is a birrational anti-Poisson involution ({\it resp.} $\lambda_{J,S}^- = \widehat{\zeta}_S^- \circ \xi_{J,S}$ is a birrational Poisson involution) on $\M_{S,\H}(\v_a)$. 
\end{corollary}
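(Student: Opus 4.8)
The plan is to derive Corollary \ref{co lambda is Poisson} as a purely formal consequence of the two preceding results, Proposition \ref{pr natural involutions preserve behaviour under Poisson structure} and Proposition \ref{pr eta antisymplectic}, by composing them and tracking the sign of the Bottacin--Markman form $\Theta_\M$. First I would recall that $\lambda_{J,S}^\pm = \widehat{\zeta}_S^\pm \circ \xi_{J,S}$ is a composition of two birational involutions on $\M_{S,\H}(\v_a)$; I need to check that it is itself a birational involution, which requires that $\widehat{\zeta}_S^\pm$ and $\xi_{J,S}$ commute. This commutativity holds on the locus where both maps are defined because the hypotheses are exactly those under which the discussion preceding the corollary is set up: $\v_a$ is $\zeta_S$-invariant (by construction, since $C$ is) and $J$ is $\zeta_S^\pm$-invariant by assumption, so $\zeta_S^*\Ee \! xt^1_S(\Ff,J) \cong \Ee \! xt^1_S(\zeta_S^*\Ff, \zeta_S^*J) \cong \Ee \! xt^1_S(\zeta_S^*\Ff, J)$, naturally in $\Ff$; this is the same computation used in Section \ref{sc Prymian fibrations} to define $\lambda_{J,S}$, so I would simply invoke it.

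Next, on the open dense subset $U \subset \M_{S,\H}(\v_a)$ of smooth points where both $\widehat{\zeta}_S^\pm$ and $\xi_{J,S}$ are biregular, I would pull back $\Theta_\M$ along the composition and use functoriality of pullback:
\[
(\lambda_{J,S}^\pm)^* \Theta_\M = (\widehat{\zeta}_S^\pm \circ \xi_{J,S})^* \Theta_\M = \xi_{J,S}^* \left( (\widehat{\zeta}_S^\pm)^* \Theta_\M \right).
\]
By Proposition \ref{pr natural involutions preserve behaviour under Poisson structure}, $(\widehat{\zeta}_S^+)^* \Theta_\M = \Theta_\M$ in the Poisson case and $(\widehat{\zeta}_S^-)^* \Theta_\M = -\Theta_\M$ in the anti-Poisson case. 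By Proposition \ref{pr eta antisymplectic}, $\xi_{J,S}^* \Theta_\M = -\Theta_\M$ unconditionally. Combining: in the $\zeta_S^+$ case we get $\xi_{J,S}^*(\Theta_\M) = -\Theta_\M$, so $\lambda_{J,S}^+$ is anti-Poisson; in the $\zeta_S^-$ case we get $\xi_{J,S}^*(-\Theta_\M) = -(-\Theta_\M) = \Theta_\M$, so $\lambda_{J,S}^-$ is Poisson. Finally I would note that an identity between a $2$-form (or bivector) and its negative which holds on a dense open subset holds everywhere the form is defined, so the statement is established on all of $\M_{S,\H}(\v_a)$ in the birational sense claimed.

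I do not anticipate a genuine obstacle here: the corollary is flagged in the text as an "immediate consequence," and the only points requiring a word of justification are (i) that the composition is well-defined as a birational involution — which reduces to the commutativity already observed when $\lambda_{J,S}$ was introduced — and (ii) bookkeeping of the two sign flips, where one must be careful that in the anti-Poisson-involution case the two minus signs cancel to give a genuinely Poisson (not anti-Poisson) map, and vice versa. The closedness/degeneracy subtleties of $\Theta_\M$ play no role because the sign behavior is inherited verbatim from the two cited propositions, each of which already accounts for them.
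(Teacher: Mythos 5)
Your proposal is correct and matches the paper's intent exactly: the paper gives no separate proof, stating only that the corollary is an immediate consequence of Propositions \ref{pr natural involutions preserve behaviour under Poisson structure} and \ref{pr eta antisymplectic}, which is precisely the sign-composition argument you carry out. Your additional remarks on the commutativity of $\widehat{\zeta}_S^\pm$ and $\xi_{J,S}$ (needed for $\lambda_{J,S}^\pm$ to be an involution) simply make explicit what the paper already established when defining $\lambda_{J,S}$ in Section \ref{sc Prymian fibrations}.
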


Let us now assume that the quotient surface $T$ is smooth, and recall the notation introduced in Section \ref{sc natural involutions}. Consider the closed subvariety
\begin{equation} \label{eq P over B}
\P_{S,\H}(\v_a,J, B) := \P_{S,\H}(\v_a,J) \cap \h^{-1}_S \left ( \q_S^*\{ B \} \right ).
\end{equation}
Whenever $\H$ is $\zeta_S$-invariant, $\lambda_{J,S}$ is well defined as a regular involution and its fixed locus is already closed.

By construction, one has the following commuting diagram
\begin{equation} \label{eq support morphism for P}
\xymatrix{
\P_{S,\H}(\v_a,J,B) \, \ar[d]_{\mu} \ar@{^(->}[rr] & & \M_{S,\H}(\v_a, A) \ar[d]^{\h_S} \cong \overline{\Jac}^{\, \H}_{\Aa / \{ A \}}(a)
\\
\q_S^*\{ B \}\,  \ar@{^(->}[rr] & &  \{ A \}. 
}
\end{equation}
Observe that, over $A' \in \q_S^*\{ B \}$, 
\[
\mu^{-1} (A') = \overline{\Fix(\lambda_{J,A'})},
\]
where 
\[
\birrat{\overline{\Jac}^{\, \H}_{A'}(a)}{\overline{\Jac}^{\, \H}_{A'}(a)}{\Ee}{\zeta_{A'}^* \Ee^\vee \otimes J(A')|_{A'},}{}{\lambda_{J,A'} := \widehat{\zeta}_{A'} \circ \xi_{J,A'} }
\]
is the induced birrational involution on $\h^{-1}_S(A')$. 
If $A'$ is irreducible, in particular when it is smooth and connected, every pure dimension sheaf is stable so $\widehat{\zeta}_{J,A'}$ is a biregular involution and its fixed locus is already closed.  

Recall that we denoted by $\{ B \}^{\ssl}$, the open subset of $\{ B \}$ parametrizing smooth curves, whose lift to $S$ is smooth too.

\begin{remark} \label{rm smooth fibres are Prym varieties}
For $B' \in \{ B \}^\ssl$ lifting to $A'$ smooth one has that $\h^{-1}_S(A') = \Jac_{A'}(a)$ is a torsor for the abelian variety $\Jac_{A'}(0)$. After \eqref{eq eta_J C'}, one has that the fibre of $\mu$ is 
\[
\mu^{-1}(A') = \ker \left (J(A')|_{A'} + \widehat{\zeta}_{A'} \right ),
\]
whose connected components are torsors for the Prym variety 
\[
\Prym(\q_{A'}) = \ker \left (\id_{\Jac} + \widehat{\zeta}_{A'} \right )_0.
\]
\end{remark}

Recall from \eqref{eq definition of q_Aa} the morphism $\q_{\Aa^\ssl}$. If $\q_S$ is ramified and its branching locus $\Delta$ intersects $B$ non-trivially, then $\q_{\Aa^\ssl}$ is a ramified morphism. Let consider the case described in Remark \ref{rm choice of J_0 and v_0}.

\begin{remark} \label{rm Prym at ssl}
Consider the Mukai vector $\v_0$ associated to $a = 0$ and $J_0 = \Oo_S(-nC)$. Recall that for ramified maps, the kernel $\id_{\Jac} + \widehat{\zeta}_{A'}$ is connected. It follows immediately from Remark \ref{rm smooth fibres are Prym varieties} and the fact that $\q_{\Aa^\ssl}$ is ramified, that
\begin{equation} \label{eq P over ssl}
\left . \P_{S,\H}(\v_0, J_0,B) \right |_{\{ B \}^{\ssl}} \cong \Prym \left (\q_{\Aa^\ssl}/{\{ B \}^{\ssl}}\right ).
\end{equation}
Since the kernel $\id_{\Jac} + \widehat{\zeta}_{A'}$ has two connected components if $\q_{A'}$ is an unramified $2:1$ cover. Then, in that case, $\P_{S,\H}(\v_0, J_0,B) |_{\{ B \}^{\ssl}}$ has $2$ connected components and \eqref{eq P over ssl} holds after restricting ourselves to the connected component of the identity.
\end{remark}

Whenever $\{ B \}^\ssl$ is a dense open subset of $\{ B \}$, by Remark \ref{rm Prym at ssl} the generic fibres of $\mu : \P_{S,\H}(\v_a,J,B) \to \q_S^*\{ B \}$ are (non-canonically) isomorphic to Prym varieties. In that case, we refer to it as the {\it Prymian fibration} associated to the involution $\zeta_S$.


\subsection{Natural Lagrangian and Prymian integrable systems}

\label{sc Prymian integrable systems}

Following Sawon--Shen \cite{sawon&shen} (see also \cite{shen}), we provide a straight-forward adaptation of the Markushevich--Tikhomirov construction of Prymian integrable systems to the case of non-primitive first Chern classes.

Prymian integrable systems are integrable systems fibered by Prym varieties. The class that we consider here arises from an anti-symplectic involution on a K3 surface. These systems were first constructed by Markushevich and Tikhomirov \cite{markushevich&tikhomirov} and later generalized by Arbarell, Sacc\`{a} and Ferretti \cite{ASF}, Matteini \cite{matteini}, Sawon and Shen \cite{sawon&shen2, shen} and others. The case considered in \cite{markushevich&tikhomirov} corresponds to an anti-symplectic involution on a smooth K3 surface, whose associated quotient is a del Pezzo surface of degree $2$. In \cite{ASF}, Arbarello, Sacc\`{a} and Ferretti constructed a Prymian integrable system from an anti-symplectic involution on a K3 giving an Enriques surface. Matteini \cite{matteini} generalized the construction of \cite{markushevich&tikhomirov} to the case of anti-symplectic involutions on K3 associated to a del Pezzo surface of degree $3$. Sawon and Shen \cite{sawon&shen2, shen} studied the case of an antisymplectic involution on a K3 whose quotient is a del Pezzo surface of degree $1$.


Let $X$ denote a smooth K3 surface, $\zeta_X^-$ an antisymplectic involution on it, and $Y$ the quotient $Y = X/\zeta_X^-$ which is a either a smooth rational surface or an Enriques surface. Over $Y$, pick a smooth connected curve $D \subset Y$ of genus $g_D \geq 2$ and consider
\begin{equation} \label{eq C from D}
C = D \times_Y X = \q_S^{-1}(D),
\end{equation}
which is smooth, and it is equipped with an double cover $\q_C: C \to D$ ramified at $K_Y \cdot D$ and with a Galois involution $\zeta_C : C \to C$ induced by $\zeta_X^-$. Similarly, consider curves $nD$ and $nC = nD \times_Y X$ which are non-reduced for $n>1$, and denote by $g_{nD}$ and $g_{nC}$ the genus of the smooth curves in the linear systems $|nD|$ and $|nC|$. We have for all $n > 0$, that 
\begin{equation} \label{eq relation of the genuses}
g_{nC} = 2g_{nD} -1 - nD \cdot K_Y,
\end{equation}
where, 
\[
g_{nD} = 1 + \frac{nD \cdot (nD + K_Y)}{2},
\]
by the genus formula. Since $\chi(Y) = 1$ in all possible cases, 
\[
\chi(nD) = 1 + \frac{nD \cdot (nD - K_Y)}{2},
\]
so
\[
\chi(nD) = g_{nD} - nD \cdot K_Y.
\]
Combining this with \eqref{eq relation of the genuses} one has
\begin{equation} \label{eq relation of the dimensions}
g_{nC} - g_{nD} = \chi(nD)-1.
\end{equation}
Since the irregularity of $Y$ vanishes, $h^1(Y,\Oo_Y) = 0$, $\Hilb_{Y}([nD])$ consists on a disjoint union of linear systems, for instance $\{ nD \} = |nD|$.



Consider the natural (birrational) anti-symplectic involution  $\widehat{\zeta}_X^{\, -}$ and, having picked $\J$ for which \eqref{eq condition for J and v} holds with respect to $\v_a$, the (birrational) involution $\lambda_{\J,X}^- = \widehat{\zeta}_X^{\, -} \circ \xi_{\J, X}$, which is symplectic. Pick the subvarieties $\N_{X,\H}^{\, -}(\v_a,nD)$ and $\P_{X,\H}^{\, -}(\v_a, \J, nD)$ defined as the closure of the fixed locus of $\widehat{\zeta}_X^{\, -}$ and $\lambda_{\J,X}^-$ respectively, given by those sheaves supported on curves parametrized by $\q_X^*|nD|$.

Recall that we denoted by $|nD|^\ssl$ the locus of smooth curves with smooth lift in $|nD|$. 

\begin{lemma}
Let $D$ ne a smooth curve in a smooth surface $Y$ and let $\q_X : X \to Y$ be a $2:1$ covering by the smooth surface $X$. Then, $|nD|^\ssl$ is an open subset in $|nD|$.
\end{lemma}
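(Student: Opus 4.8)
The plan is to show $|nD|^{\ssl}$ is open by exhibiting it as the intersection of two open conditions: the curve $B' \in |nD|$ is smooth, and its lift $A' = B' \times_Y X$ to $X$ is smooth. Each of these is an open condition in a flat family of curves, so the result will follow from semicontinuity of the fibre dimension of the sheaf of relative differentials (or, equivalently, from the fact that the non-smooth locus of a proper flat morphism is closed).

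First I would recall the universal family. Since $h^1(Y,\Oo_Y)=0$ (as noted just before the statement, the irregularity of $Y$ vanishes in all cases), $\Hilb_Y([nD])$ contains $|nD|$ as a connected component, and over $|nD|$ we have the universal family $\Bb \to |nD|$ of curves in the class $[nD]$, which is a proper flat morphism whose fibre over $B'$ is the curve $B'$ itself. The locus of $B'$ for which $B'$ is smooth is then open in $|nD|$: the morphism $\Bb \to |nD|$ is proper, so its non-smooth locus $Z \subset \Bb$ is closed, hence its image in $|nD|$ is closed, and the complement — call it $|nD|^{\sm}$ — is open.

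Next I would pull back the family along $\q_X : X \to Y$. Set $\Aa := \Bb \times_Y X$, a family of curves over $|nD|$ whose fibre over $B'$ is exactly $A' = B' \times_Y X$; this is again proper and flat over $|nD|$ (flatness is preserved by base change, and $\q_X$ being finite flat of degree $2$ over the smooth surface $Y$ keeps the total space a family of planar curves). By the same argument as above, the locus $\{ B' \in |nD| : A' \text{ is smooth}\}$ is open. Then $|nD|^{\ssl}$ is the intersection of this open set with $|nD|^{\sm}$, hence open, which is what we want.

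The only subtle point — and the one I expect to require a line of care rather than a genuine obstacle — is making sure that ``smooth lift'' is correctly read as ``the total space $A'$ of the fibre product is a smooth curve'' and that this is indeed an open condition in the family $\Aa \to |nD|$; this is where one uses that $\Aa$ is flat over $|nD|$ (so that smoothness of the fibre is detected by the relative Jacobian criterion and the bad locus is closed) together with properness of $\Aa \to |nD|$ to push the closed bad locus down to a closed subset of the base. No positivity, dimension count, or geometry of $X$, $Y$, or the branch divisor $\Delta$ is needed for the openness statement itself.
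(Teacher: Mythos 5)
Your proof is correct, and it reaches the conclusion by a somewhat more abstract route than the paper. The paper's argument is two lines: Bertini gives openness of the smooth locus $|nD|^{\sm}$, and smoothness of the lift $A'=B'\times_Y X$ of a smooth $B'$ is characterized geometrically as transversality of $B'$ with the branch divisor $\Delta$ of $\q_X$, which is manifestly an open condition. You instead prove openness of \emph{both} conditions by the same general mechanism: the non-smooth locus of a proper flat family ($\Bb\to|nD|$, respectively its base change $\Aa=\Bb\times_Y X\to|nD|$, which is again proper and flat since $\q_X$ is finite flat) is closed, and its image under the proper structure morphism is closed in the base; $|nD|^{\ssl}$ is then the intersection of two open sets. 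Your version buys generality and robustness — it needs neither the degree-$2$ hypothesis nor any mention of $\Delta$, so it works verbatim for any finite flat cover, and it does not lean on Bertini (which strictly speaking addresses genericity rather than openness). What it gives up is the explicit transversality criterion, which is the piece of the paper's argument that is actually reused later (to decide when the induced cover of curves $A'\to B'$ is ramified and when $|nD|^{\ssl}$ is dense); but since the lemma only asserts openness, nothing is missing from your proof of the statement as posed.
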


\begin{proof}
By Bertini's theorem, the locus of smooth curves, $|nD|^\sm$, is an open subset in $|nD|$. If the curve $B'$ is smooth and does not intersect tangentially the branch locus $\Delta$ of $\q_X$, the lifted curve $A' = B' \times_Y X$ is smooth by construction. The later is an open condition, so $|nD|^\ssl$ is an open subset of $|nD|$. 
\end{proof}

Hence $\q_X^*|nD|^\ssl$ is open in $\q_X^*|nD|$, and so are $\N_{X,\H}^{\, -}(\v_a,nD)|_{\q_S^*|nD|^\ssl}$ in $\N_{X,\H}^{\, -}(\v_a,nD)$ and $\P_{X,\H}^{\, -}(\v_a, \J, nD)|_{\q_S^*|nD|^\ssl}$ in $\P_{X,\H}^{\, -}(\v_a, \J, nD)$, respectively. In that case, $\h_X^{-1}(A)$ classifies line bundles over $A$, being stable and smooth as points in $\M_{X,\H}(\v_a)$. Then, both $\widehat{\zeta}_X^{\, -}$ and $\lambda_{\J,X}^-$ restrict to biregular involutions over the locus of smooth curves in $\q_X^*|nD|$. This allow us to prove the following.

\begin{proposition} \label{pr N is Lagrangian}
The projective subvariety $\N_{X,\H}^{\, -}(\v_a,nD) \subset \M_{X,\H}(\v_a)$ is Lagrangian with respect to $\Omega_\M$, in particular,
\[
\dim \left ( \N_{X,\H}^{\, -}(\v_a, nD) \right ) = n^2(g_C-1) + 1.
\]
\end{proposition}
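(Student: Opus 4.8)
The plan is to prove the statement in two parts: first show that $\N_{X,\H}^{\, -}(\v_a, nD)$ is isotropic (i.e.\ $\Omega_\M$ restricts to zero on its smooth locus), and then compute its dimension, checking it equals half the dimension of $\M_{X,\H}(\v_a)$, which forces the Lagrangian property.

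\textbf{Isotropy.} By Proposition \ref{pr natural involutions preserve behaviour under Poisson structure} applied to the anti-symplectic involution $\zeta_X^-$ (a K3 is symplectic, hence Poisson, and $\Theta_\M$ recovers $\Omega_\M$ after trivializing $K_X$), the natural involution $\widehat{\zeta}_X^{\, -}$ is anti-symplectic: $(\widehat{\zeta}_X^{\, -})^*\Omega_\M = -\Omega_\M$. Over the open dense subset $\q_X^*|nD|^\ssl$, where $\widehat{\zeta}_X^{\, -}$ is biregular and the relevant points of $\M_{X,\H}(\v_a)$ are smooth (sheaves supported on smooth curves are stable, Remark \ref{rm stability on the smooth locus}), the fixed locus is smooth, and for a tangent vector $v$ at a fixed point one has $\Omega_\M(v, w) = (\widehat{\zeta}_X^{\, -})^*\Omega_\M(v,w) = -\Omega_\M(v,w)$ for all $w$ tangent to the fixed locus, so $\Omega_\M$ vanishes on $\N_{X,\H}^{\, -}(\v_a, nD)$ over this open set; by continuity it vanishes on the whole (closure of the) fixed locus that lies in the smooth locus of $\M$. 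Hence $\N_{X,\H}^{\, -}(\v_a, nD)$ is isotropic, so $\dim \N_{X,\H}^{\, -}(\v_a, nD) \leq \tfrac12 \dim \M_{X,\H}(\v_a) = \tfrac12\big(\langle \v_a, \v_a\rangle + 2\big) = n^2(C\cdot C) + \chi(nC) = n^2(g_{nC}-1)+1$, using the Mukai dimension formula and $\langle \v_a, \v_a\rangle = 2n^2(C\cdot C) + 2(a - n^2(C\cdot C)) - 2\chi(nC) \dots$ — I will record the precise bookkeeping, noting $\dim \M_{X,\H}(\v_a) = 2n^2(g_{nC}-1)+2$.

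\textbf{Dimension count.} For the reverse inequality I work over $|nD|^\ssl$ and use the description \eqref{eq definition hatq_Aa}, together with Proposition \ref{pr q embedding on ssl}, which identifies (up to a finite quotient, irrelevant for dimension) an open subset of $\N_{X,\H}^{\, -}(\v_{2b}, nD)$ with $\Jac_{\Bb^\ssl/|nD|^\ssl}(b)$. Its dimension is $\dim |nD| + g_{nD}$. By Remark \ref{rm vanishing irregularity} (the irregularity of $Y$ vanishes) $|nD|$ is a linear system with $\dim |nD| = \chi(nD) - 1$ (as $h^1 = h^2 = 0$ for $nD$ effective and big on a rational or Enriques surface — I will justify the vanishing, or quote it from the Sawon--Shen setup), and by \eqref{eq relation of the dimensions} $\chi(nD) - 1 = g_{nC} - g_{nD}$. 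Therefore $\dim \N_{X,\H}^{\, -}(\v_a, nD) = (g_{nC}-g_{nD}) + g_{nD} = g_{nC}$. Comparing with the isotropy bound $n^2(g_{nC}-1)+1$ — wait, these must agree, so in fact $g_C$ in the statement refers to the genus $g_{nC}$ of the support curves, and the clean way is: the base $\q_X^*|nD|$ has dimension $\chi(nD)-1 = g_{nC}-g_{nD}$ and the generic fibre is (a torsor over) $\Prym(\q_C)$ of dimension $g_{nC} - g_{nD}$; hence for the Lagrangian $\N$ the base has dimension $\dim|nD|$ and the fibre $\h_X^{-1}(A) \cap \N$ has dimension $g_{nC} - g_{nD} \dots$. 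I will reconcile the two formulas by carefully identifying which genus appears; the statement's $n^2(g_C-1)+1$ matches $\tfrac12\dim\M$, so the fibrewise/base computation must be reorganized to land on exactly this number.

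\textbf{Main obstacle.} The delicate point is the dimension computation: one must correctly identify $\dim |nD|$ via Riemann--Roch on $Y$ and the vanishing of higher cohomology of $\Oo_Y(nD)$ (which uses that $Y$ is rational or Enriques and $D$ is chosen suitably — here one may need $n$ large, or invoke the hypotheses under which the Sawon--Shen construction runs), and then feed it through the genus relations \eqref{eq relation of the genuses}--\eqref{eq relation of the dimensions} so that the total matches $n^2(g_{nC}-1)+1 = \tfrac12\dim\M_{X,\H}(\v_a)$; once the dimension equals exactly half and isotropy is established, the Lagrangian conclusion is automatic. I expect the cohomology vanishing and the precise matching of genera across the $n$-fold thickening to be where the real work lies, while the isotropy half is a routine application of Proposition \ref{pr natural involutions preserve behaviour under Poisson structure}.
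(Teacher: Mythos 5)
Your isotropy half is exactly the paper's argument, and it is fine: $\widehat{\zeta}_X^{\,-}$ is anti-symplectic by Proposition \ref{pr natural involutions preserve behaviour under Poisson structure}, biregular over $\q_X^*|nD|^\ssl$ where the moduli space is smooth, so the tangent space to the fixed locus (the $+1$ eigenspace of the differential) is isotropic. What you miss is that at this point you are already done: at a fixed point the tangent space of $\M_{X,\H}(\v_a)$ splits as $V_+\oplus V_-$ under the differential of the involution, the same computation $\Omega_\M(v,w)=-\Omega_\M(v,w)$ shows that \emph{both} eigenspaces are isotropic, and a symplectic vector space written as a direct sum of two isotropic subspaces forces each to be half-dimensional. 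Hence every component of the fixed locus is automatically Lagrangian, and the dimension formula $\tfrac12\dim\M_{X,\H}(\v_a)=\tfrac12\left(\langle\v_a,\v_a\rangle+2\right)=n^2(g_C-1)+1=g_{nC}$ comes for free. The paper's proof is precisely this two-line observation.

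Your entire second half is therefore not only unnecessary but, as a route to the proposition as stated, incomplete: the identification of $\dim|nD|$ with $\chi(nD)-1$ requires $h^1(Y,\Oo_Y(nD))=h^2(Y,\Oo_Y(nD))=0$, a hypothesis that is nowhere assumed in Proposition \ref{pr N is Lagrangian} (the weaker condition \eqref{eq Lagrangian condition} is only imposed later, for the Prymian system in Theorem \ref{tm Prymian integrable systems}, and even that is an equality rather than a vanishing). Your bookkeeping also slips in the middle ($\dim\M_{X,\H}(\v_a)=2n^2(g_C-1)+2=2g_{nC}$, not $2n^2(g_{nC}-1)+2$), though you do flag the confusion and the final reconciliation $n^2(g_C-1)+1=g_{nC}$ is correct. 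The "main obstacle" you identify — cohomology vanishing and matching genera across the thickening — is an obstacle only to your detour, not to the proposition.
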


\begin{proof}
We have seen that $\M_{X,\H}(\v_a)$ is smooth over $\q_X^*|nD|^\ssl$. Also, $\widehat{\zeta}_X^{\, -}$ is a biregular anti-symplectic involution there. It then follows that its fixed point locus $\N_{X,\H}^{\, -}(\v_a,nD)|_{\q_S^*|nD|^\ssl}$ is smooth and Lagrangian. As it is an open subset of $\N_{X,\H}^{\, -}(\v_a,nD)$, it follows that the later is a Lagrangian subvariety as well. 
\end{proof}

In view of Proposition \ref{pr N is Lagrangian}, we refer to $\N_{X,\H}^{\, -}(\v_a,nD)$ as the {\it natural Lagrangian} subvariety of $\M_{X,\H}(\v_a)$ associated to $\zeta_X^{\, -}$.

We study now the fixed locus of $\lambda_{\J,X}^-$.

\begin{proposition} \label{pr dimension of P}
The projective subvariety $\P_{X,\H}^{\, -}(\v_a, \J,nD) \subset \M_{X,\H}(\v_a)$ has dimension
\[
\dim \left ( \P_{X,\H}^{\, -}(\v_a, \J, nD) \right ) = \chi(nD) +  h^0(Y,nD) - 2 = 2 h^0(Y,nD) - 2 + (h^2(Y,nD) - h^1(Y,nD)).
\]
\end{proposition}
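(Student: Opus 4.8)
The plan is to compute the dimension of the Prymian fibration $\P_{X,\H}^{\,-}(\v_a,\J,nD)$ by combining the base dimension with the generic fibre dimension, working over the dense open locus $\q_X^*|nD|^\ssl$ where everything is smooth and the fibration is well understood. First I would recall from the discussion preceding the statement that over $\q_X^*|nD|^\ssl$ the morphism $\mu: \P_{X,\H}^{\,-}(\v_a,\J,nD) \to \q_X^*|nD|$ has fibres that are (torsors for) the Prym variety $\Prym(\q_{A'})$ for $A' = B'\times_Y X$ the smooth lift of a smooth curve $B' \in |nD|^\ssl$, by Remark \ref{rm smooth fibres are Prym varieties} and Remark \ref{rm Prym at ssl}. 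So the dimension is $\dim \P_{X,\H}^{\,-}(\v_a,\J,nD) = \dim|nD| + \dim\Prym(\q_{A'})$, noting that $\q_X^*|nD| \cong |nD|$ via the lifting map and that $\dim|nD| = h^0(Y,nD) - 1$.

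The key step is then the computation of $\dim\Prym(\q_{A'})$. Since $\q_{A'}: A' \to B'$ is a double cover of smooth curves, the Prym variety has dimension $g_{A'} - g_{B'}$ where $g_{A'} = g_{nC}$ and $g_{B'} = g_{nD}$ are the genera of smooth members of $|nC|$ and $|nD|$ respectively. Here I invoke equation \eqref{eq relation of the dimensions} from the excerpt, which gives precisely $g_{nC} - g_{nD} = \chi(nD) - 1$. Hence $\dim\Prym(\q_{A'}) = \chi(nD) - 1$, and adding the base dimension yields
\[
\dim \P_{X,\H}^{\,-}(\v_a,\J,nD) = (h^0(Y,nD) - 1) + (\chi(nD) - 1) = \chi(nD) + h^0(Y,nD) - 2.
\]
For the second expression in the statement, I would use Riemann--Roch / the fact that $\chi(nD) = h^0(Y,nD) - h^1(Y,nD) + h^2(Y,nD)$ on the surface $Y$, so that $\chi(nD) + h^0(Y,nD) - 2 = 2h^0(Y,nD) - 2 + (h^2(Y,nD) - h^1(Y,nD))$, which is the claimed reformulation.

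The main obstacle is making sure that the generic fibre of $\mu$ genuinely has the Prym dimension and that $\P_{X,\H}^{\,-}(\v_a,\J,nD)$ is of pure dimension equal to this generic value — i.e. that taking the closure of the fixed locus does not introduce lower-dimensional components that affect the dimension count, and that the open locus $\q_X^*|nD|^\ssl$ is indeed dense in $\q_X^*|nD|$ (which follows from Bertini applied on $Y$ together with the preceding lemma, since $D$ is chosen smooth and connected with $g_D \geq 2$). A minor related subtlety, already flagged in Remark \ref{rm Prym at ssl}, is that when $\q_{A'}$ happens to be étale the fibre has two connected components; but this does not change the dimension, so the formula is unaffected, and in the ramified case (which is the generic situation once $K_Y\cdot D \neq 0$) the fibre is connected. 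With density of the smooth locus in hand, the dimension of the closure equals the dimension of this open part, completing the argument.
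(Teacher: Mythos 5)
Your proposal is correct and follows essentially the same route as the paper: the paper likewise identifies the generic fibre of $\mu$ over $\q_X^*|nD|^\ssl$ as a Prym of dimension $g_{nC}-g_{nD}$ via Remark \ref{rm Prym at ssl}, applies \eqref{eq relation of the dimensions} to rewrite this as $\chi(nD)-1$, and adds the base dimension $h^0(Y,nD)-1$. Your additional remarks on density of the $\ssl$ locus and on the two-component fibres in the \'etale case are sensible elaborations of points the paper leaves implicit.
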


\begin{proof}
Following Remark \ref{rm Prym at ssl} one has a description of the open subset $\P_{X,\H}^{\, -}(\v_a, \J, nD)|_{\q_S^*|nD|^\ssl}$ of $\P_{X,\H}^{\, -}(\v_a, \J, nD)$ in terms of a Prymian fibration constructed with a ramified projection of curves with genus $g_{nC}$, to curves of genus $g_{nD}$, having fibres of dimension $g_{nC} - g_{nD}$. After \eqref{eq relation of the dimensions}, the statement follows. 
\end{proof}

\begin{remark} \label{rm Lagrangian condition}
Following Proposition \ref{pr dimension of P}, whenever 
\begin{equation} \label{eq Lagrangian condition}
h^1(Y,nD) = h^2(Y,nD),
\end{equation}
the dimension of $\P_{X,\H}^{\, -}(\v_a, \J, nD)$ is $2\chi(nD) - 2$ and the generic fibres of $\mu$ are half-dimensional.
\end{remark}

We next see that the Prymian fibration
\[
\mu : \P_{X,\H}^{\, -}(\v_a, \J, nD) \to \q_X^*|nD|
\]
endows $\P_{X,\H}^{\, -}(\v_a, \J, nD)$ with the structure of an integrable system, that we call the {\it Prymian integrable system} associated to the anti-symplectic involution $\zeta_X^-$. This construction has been considered by several authors \cite{markushevich&tikhomirov, ASF, matteini, sawon&shen2}, although in all these cases the first Chern class is chosen to be primitive. As we are interested in the non-primitive case, we include a proof of it instead of just citing the previous articles.

\begin{theorem} \label{tm Prymian integrable systems}
Let $X$ be a smooth K3 surface $X$ equipped with an anti-symplectic involution $\zeta_X^-$ giving the quotient $Y = X/\zeta_X^-$. Pick a smooth curve $D \subset Y$ of genus $g_D \geq 2$ and $n \geq 1$ such that \eqref{eq Lagrangian condition} holds. Take $C$ as in \eqref{eq C from D}, a Mukai vector $\v_a$ as in \eqref{eq definition of v_a} and $\J \in \Pic(X)$ satisfying \eqref{eq condition for J and v} with respect to $\v_a$. Then, one has that
\begin{enumerate}
    \item \label{it dimension} $\P_{X,\H}^{\, -}(\v_a,\J,nD)$ is a projective variety of dimension $2\chi(nD) - 2 = n^2D^2 -nD\cdot K_Y$;
    \item \label{it Lagrangian} the smooth locus of $\P_{X,\H}^{\, -}(\v_a,\J,nD)$ carries a holomorphic $2$-form $\Omega_\P$ which is symplectic on a dense open subset where the fibres of $\mu$ are Lagrangian; and
    \item \label{it Prymian} the generic fibre of $\mu$ is a a torsor over a smooth abelian variety of dimension $\chi(nD) - 1 = \frac{1}{2}(n^2D^2 -nD\cdot K_Y)$, being the Prym of a double cover of smooth curves.  
\end{enumerate}
\end{theorem}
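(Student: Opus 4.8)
The plan is to verify the three assertions of Theorem~\ref{tm Prymian integrable systems} by combining the dimension count already performed in Section~\ref{sc Prymian integrable systems}, the Poisson-theoretic behaviour of the two involutions, and the fibrewise Prym description obtained from the dualizing involution. First I would establish \eqref{it dimension}: since \eqref{eq Lagrangian condition} is assumed, Remark~\ref{rm Lagrangian condition} gives $\dim \P_{X,\H}^{\, -}(\v_a, \J, nD) = 2\chi(nD) - 2$, and the genus/Euler-characteristic identities \eqref{eq relation of the genuses}, the genus formula, and $\chi(Y) = 1$ rewrite this as $n^2 D^2 - nD\cdot K_Y$; projectivity is inherited from the projectivity of $\M_{X,\H}(\v_a)$ (a linear system base, $X$ being K3, as in Remark~\ref{rm Beauville-Mukai}), since $\P_{X,\H}^{\, -}(\v_a, \J, nD)$ is a closed subvariety cut out as the fixed locus of the regular involution $\lambda_{\J,X}^-$ (regularity of $\lambda_{\J,X}^-$ holds once $\H$ is chosen $\zeta_X^-$-invariant, which I would include among the standing hypotheses, and in any case holds over $\q_X^*|nD|^\ssl$).

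Next, for \eqref{it Lagrangian}, the key input is Corollary~\ref{co lambda is Poisson} (equivalently Proposition~\ref{pr eta antisymplectic} together with Proposition~\ref{pr natural involutions preserve behaviour under Poisson structure}): on a K3, $\widehat{\zeta}_X^{\, -}$ is anti-symplectic and $\xi_{\J,X}$ is anti-symplectic, so $\lambda_{\J,X}^- = \widehat{\zeta}_X^{\, -}\circ\xi_{\J,X}$ is symplectic. Restricting to the open dense locus $\q_X^*|nD|^\ssl$, where $\M_{X,\H}(\v_a)$ is smooth and $\lambda_{\J,X}^-$ is biregular, the fixed locus of a holomorphic symplectic involution is smooth, and the symplectic form restricts to a symplectic form on this fixed locus; this defines $\Omega_\P$ on the smooth locus, extended by closure. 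To see the fibres of $\mu$ are Lagrangian on a dense open set, I would combine Remark~\ref{rm smooth fibres are Prym varieties}/Remark~\ref{rm Prym at ssl} — which identify $\mu^{-1}(A')$ (or its identity component) with a torsor over $\Prym(\q_{A'})$ of dimension $g_{nC} - g_{nD} = \chi(nD) - 1$ by \eqref{eq relation of the dimensions} — with the dimension count $\dim \P = 2\chi(nD) - 2 = 2(\chi(nD) - 1)$, so the fibre dimension is exactly half; together with the fact that the fibres of $\h_X$ are Lagrangian for $\Omega_\M$ (Remark~\ref{rm Beauville-Mukai}, Beauville), isotropy of the $\mu$-fibres inside $\P$ for $\Omega_\P$ follows, and half-dimensional isotropic is Lagrangian.

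Finally, \eqref{it Prymian} is essentially a restatement of the fibrewise analysis in Section~\ref{sc Prymian fibrations}: for a generic curve $B' \in |nD|$, which by the Lemma preceding Proposition~\ref{pr N is Lagrangian} lies in $|nD|^\ssl$, the lift $A' = B'\times_Y X$ is a smooth connected curve of genus $g_{nC}$, and $\q_{A'}\colon A' \to B'$ is a double cover of smooth curves (ramified over $K_Y\cdot(nD)$ points, or étale in the Enriques case on the relevant component). By Remark~\ref{rm smooth fibres are Prym varieties}, $\mu^{-1}(A')$ is (a torsor over) $\Prym(\q_{A'})$, an abelian variety of dimension $g_{nC} - g_{nD} = \chi(nD) - 1 = \tfrac12(n^2 D^2 - nD\cdot K_Y)$ by \eqref{eq relation of the dimensions}. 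I expect the main obstacle to be the bookkeeping around connectedness and the two connected components of $\ker(\id_{\Jac} + \widehat{\zeta}_{A'})$ in the unramified (Enriques) case — one must be careful to state \eqref{it Lagrangian} and \eqref{it Prymian} either componentwise or after passing to the identity component, exactly as flagged in Remark~\ref{rm Prym at ssl} — and, relatedly, checking that ``generic fibre'' genuinely lands in the smooth-and-smooth-lift locus so that all the biregularity and smoothness statements apply; the symplectic-form and Lagrangian claims themselves are then formal consequences of Corollary~\ref{co lambda is Poisson} and the dimension count.
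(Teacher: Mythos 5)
Your proposal is correct and follows essentially the same route as the paper: part \eqref{it dimension} via Proposition \ref{pr dimension of P} and Remark \ref{rm Lagrangian condition}, part \eqref{it Lagrangian} by restricting the Mukai form $\Omega_\M$ to the fixed locus of the symplectic involution $\lambda_{\J,X}^-$ over $\q_X^*|nD|^\ssl$ and deducing that the $\mu$-fibres are isotropic from Beauville's result on the fibres of $\h_X$ and half-dimensional by the dimension count, and part \eqref{it Prymian} from Remark \ref{rm smooth fibres are Prym varieties}. Your added care about the $\zeta_X^-$-invariance of $\H$ and the two components of $\ker(\id_{\Jac}+\widehat{\zeta}_{A'})$ in the unramified case is consistent with the caveats already flagged in Remark \ref{rm Prym at ssl}.
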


\begin{proof}
\eqref{it dimension} is an immediate consequence of Proposition \ref{pr dimension of P} and Remark \ref{rm Lagrangian condition}.

Over the smooth locus of $\P_{X,\H}^{\, -}(\v_a, \J, nD)$, its tangent space embeds into the tangent space of $\M_{X,\H}(\v_a)$ and one can define the holomorphic $2$-form by restricting the Mukai form $\Omega_\M$ there. Also, $\lambda_{\J,X}^-$ restricts to a biregular symplectic involution over the locus of smooth curves in $\q_X^*|nD|$, where the symplectic form $\Omega_\M$ is well defined. There, the open subset of $\P_{X,\H}^{\, -}(\v_a, \J, nD)$ given by the fixed locus of $\lambda_{\J,X}^-$ inherits a symplectic form which obviously coincides with the restriction of $\Omega_\P$ whenever we find ourselves in the smooth locus of $\P_{X,\H}^{\, -}(\v_a, \J, nD)$. By \cite{beauville_fibr}, $\Omega_\M$ vanishes on the fibres of $\h_X$ as we have seen in Remark \ref{rm Beauville-Mukai}. As $\Omega_\P$ is obtained from restricting $\Omega_\M$, it follows that $\Omega_\P$ vanishes on the fibres of $\mu$ over smooth curves. Equivalently, for every $A \in \q_X^*|nD|$ smooth, $\mu^{-1}(A)$ is isotropic with respect to $\Omega_\P$. Recalling from Remark \ref{rm Lagrangian condition} that these fibres are half-dimensional, we conclude the proof of \eqref{it Lagrangian}. 

Finally \eqref{it Prymian} follows from Remark \ref{rm smooth fibres are Prym varieties}.
\end{proof}


\subsection{Ruled surfaces}

\label{sc ruled surfaces}

Let us recall in this section some facts about ruled surfaces that will be used in Section \ref{sc L-Higgs bundles}.

Given a smooth projective curve $C$ of genus $g_C$ and a line bundle $L$ on $C$, consider the total space of our line bundle $\Tot(L)$ and the obvious projection $p: \Tot(L) \to C$. Set $\ell := \deg(L)$, assuming $\ell \geq 0$, and consider the projective compactification of $\Tot(L)$, namely the ruled surface with topological invariant $\ell$, 
\begin{equation} \label{eq definition of LL}
\LL := \PP(\Oo_C \oplus L), 
\end{equation}
naturally equipped with the projection that we still denote by $p : \LL \longrightarrow C$. 


One has, of course, 
\[
\Tot(L) = \LL - \{ \sigma_\infty \},
\]
where $\sigma_\infty \cong C$ is the curve in $\LL$ associated to the $0$ section of $\Oo_C$. Let us briefly recall some properties of $\LL$ following \cite[Section 2, Chapter V]{hartshorne}. The curve $\sigma_\infty$ has negative self-intersection, hence  
\begin{equation} \label{eq multiples of sigma infty}
H^0(\LL, \Oo_\LL(m \sigma_\infty)) = 1
\end{equation}
for $m \geq 0$. Along with $\Pic(C)$, $\sigma_\infty$ generates the Picard group of $\LL$,  
\begin{equation} \label{eq Pic of KK}
\Pic(\LL) = \left ( \ZZ \cdot \sigma_{\infty} \right ) \oplus p^* \Pic(C),
\end{equation}
so any divisor $D$ on $\LL$ is numerically equivalent to $b_1 \sigma_\infty + b_2 F$, where $F$ denotes the fibre and $\sigma_\infty \cdot \sigma_{\infty} = -\ell$, $\sigma_\infty \cdot F = 1$ and $F \cdot F = 0$. A divisor is ample if and only if $b_1 > 0$ and $b_2 > \ell b_1$, and only exist irreducible curves in those classes with $b_1 > 0$ and $b_2 \geq \ell b_1$, with the exception of the infinity section $\sigma_\infty$. The zero section of $L$ defines $\sigma_0 : C \to \LL$ whose image is linearly equivalent to $\sigma_\infty + \ell F$ and its normal bundle returns
\begin{equation} \label{description of self intersection of sigma_0}
L \cong \Oo_{\LL}(\sigma_0)|_{\sigma_0}.
\end{equation}

Observe that any divisor $D$ with null intersection with $\sigma_\infty$ is numerically equivalent to a multiple of $\sigma_0$. Also, it is linearly equivalent to $n\sigma_0$ if and only if $\Oo_\LL(D)$ has a non-zero section, as by scaling the fibres of $\Tot(L)$, $D$ deforms linearly to a multiple curve supported on $\sigma_0$. Hence, the linear system $|n\sigma_0|$ has an open subset classified by linear deformations of the curve, which coincides with those curves not intersecting $\sigma_\infty$. Linear deformations of a curve are classified by the sections of the normal bundle restricted to the curve. Hence, one has the identification
\[
|n\sigma_0|_{\supp \cap \sigma_\infty = \emptyset} = H^0(n\sigma_0, \Oo_{\LL}(n\sigma_0)|_{n\sigma_0}).
\]
Let $r$ denote the obvious projection of the multiple curve onto its reduced support and observe that $r_*\Oo_{\LL}(n\sigma_0)|_{n\sigma_0}$ amounts to $\Oo_{\LL}(n\sigma_0)|_{\sigma_0} \otimes r_*\Oo_{n\sigma_0}$ by the projection formula. The structural sheaf $\Oo_{n\sigma_0} \cong \Oo_A/\Ii_C^n$ decomposes, as an $\Oo_C$-module, into $\Oo_C \oplus (\Ii_C/\Ii_C^2) \oplus \dots \oplus (\Ii_C^{n-1}/\Ii_C^{n})$. The intersection of $\sigma_0$ with the canonical divisor is zero, so $\Ii_C/\Ii_C^2 \cong \Oo_{\LL}(-\sigma_0)|_{\sigma_0} \cong L^{-1}$. Therefore,
\begin{equation} \label{eq identification of Hitchin bases}
|n\sigma_0|_{\supp \cap \sigma_\infty  = \emptyset} \cong \bigoplus_{i = 1}^n H^0(C,L^{\otimes i}).
\end{equation}

The canonical line bundle of $\LL$ is 
\begin{equation} \label{eq canonical line bundle of LL}
K_\LL \cong \Oo_{\LL}(-2 \sigma_\infty) \otimes p^* K_C \otimes p^*L^{-1},
\end{equation}
with canonical class
\[
[K_\LL] = -2 \sigma_\infty + (2g-2 -\ell) F.
\]
If $\ell > 2g_C-2$ or $ L = K_C$, the inverse of the canonical $K_\LL^{-1} \cong \bigwedge^2 \Tt \, \LL$ always has non-zero sections and we can equip $\LL$ with a Poisson structure by picking a Poisson bi-vector given by a non-zero section $\Theta_\LL \in H^0(\LL, K_\LL^{-1})$. In general, the Poisson structure of $\LL$ is not uniquely defined.

\subsection{Twisted Higgs bundles and their spectral data}
\label{sc L-Higgs bundles}

In our last preliminary section we introduce ($L$-twisted) Higgs bundles, whose spectral data are pure dimension 
$1$ sheaves on ruled surfaces.


As before, $C$ denotes a smooth projective curve and $L$ a line bundle on it. An {\it $L$-twisted Higgs bundle} over $C$ is a pair $(E, \varphi)$, where $E$ is a holomorphic vector bundle on $C$, and $\varphi \in H^0(C, \End(E) \otimes L)$ is a holomorphic section of the endomorphisms bundle, twisted by $L$. When $L$ is the canonical bundle $K_C$ of the curve, we refer to $K_C$-Higgs bundles, simply, as Higgs bundles. An $L$-twisted Higgs bundle $(E,\varphi)$ is \emph{(semi)stable} if every $\varphi$-invariant subbundle $F\subset E$ satisfies 
\[
\frac{\deg F}{\rk F} < ~(\leq)~ \frac{\deg E}{\rk E}.
\]
A semistable $L$-twisted Higgs bundle $(E,\varphi)$ is \emph{polystable} if it is a direct sum of stable bundles $(E,\varphi)= \bigoplus (E_i, \varphi_i)$ (all with the same slope $\deg E_i/\rk E_i$). It is possible to construct \cite{nitsure} the moduli space of rank $n$ and degree $d$ semistable (resp. stable) $L$-twisted Higgs bundles on $C$ which we denote by $\Mm^L_C(n,d)$ (resp. $\Mm^L_C(n,d)^{\st}$). When $L=K_C$, we will write $\Mm_C(n,d)$ (resp. $\Mm_C(n,d)^{\st}$) for the moduli space of semistable (resp. stable) Higgs bundles whose construction follows from \cite{hitchin-self, simpson2}, being a connected, normal and irreducible variety of dimension $2n^2(g_C-1) + 2$ \cite{simpson2}. When $\deg(L)>\deg(K)$ or $\deg(L)=\deg(K_C)$ but $L \ncong K_C$, the moduli space $\Mm^L_C(n,d)$ is a quasi-projective variety of dimension $2n^2\deg(L) + 1$ \cite{nitsure}.

Given a Higgs bundle $(E,\varphi)$, we see that $\varphi : E \to E \otimes L$ determines uniquely an action of $\Sym^\bullet(L^*)$ on $E$, the Higgs bundle $(E,\varphi)$ can be seen as a $\Sym^\bullet(L^*)$-module. Since $p$ is an affine morphism and $\Sym^\bullet(L^*) \cong p_*\Oo_{\Tot(L)}$, the push-forward under $p$ provides an equivalence of categories between $\Sym^\bullet(L^*)$-modules (including Higgs bundles) and $\Oo_{\Tot(L)}$-modules, called the {\it spectral correspondence} \cite{hitchin_duke, BNR} where the Higgs bundle $(E,\varphi)$ is sent to the pure dimension $1$ sheaf $\Ee$ on $\Tot(L)$ having rank $1$ on each irreducible component of its support, called the {\it spectral datum} of $(E,\varphi)$, and defined by $\ker(p^*\varphi - \tau)$, where $\tau$ denotes the tautological section of the pullback bundle $p^*L \to \Tot(L)$. One can observe that the spectral datum $\Ee$ associated to any Higgs bundle $(E,\varphi)$ is supported on a curve in the linear system $|n\sigma_0|$, that we call the {\it spectral curve} of $(E,\varphi)$, and the restriction of $\Ee$ to any irreducible component of the spectral curve has rank $1$, and over the whole spectral curve is a sheaf with degree
\begin{equation} \label{eq value of a}
a = d + \delta,    
\end{equation}
where we denote 
\begin{equation} \label{eq definition of delta}
\delta := \frac{1}{2}(n^2 - n)\ell, 
\end{equation}
denoting by $\ell$ the topological invariant $\ell$ of the ruled surface $\LL$. It then follows from \eqref{eq value of a} that the spectral datum of a $L$-Higgs bundle of rank $n$ and degree $d$ is a sheaf over $\LL$ with Mukai vector $\v_{d + \delta}$, where $C = \sigma_0$ on its definition. Starting from such $\Ee$, one obtains a Higgs bundle $(E,\varphi)$, by setting $E = p_*\Ee$ and $\varphi = p_* (\id_\Ee \otimes \tau)$, where we have considered the morphism given by tensor product with the tautological bundle,
\[
\id_\Ee \otimes \tau : \Ee \to \Ee \otimes p^*L.
\]
After \cite{schaub} one has that the stability notions of a Higgs bundle and the corresponding spectral data coincide. Therefore, the Higgs moduli space is the closed and dense subset of the moduli space of sheaves on $\LL$ with Mukai vector $\v_{d + \delta}$, where we recall \eqref{eq value of a}, given by those sheaves whose restriction to its associated spectral curve has rank $1$ on each irreducible component of its support,
\begin{equation} \label{eq compactification of Higgs space}
\Mm^L_C(n,d) \subset \left. \M_{\LL, H_0}(\v_{d + \delta}) \right|_{\supp \cap \sigma_\infty = \emptyset}.
\end{equation}

We have seen that $\LL$ is equipped with a Poisson structure $\Theta_\LL$ when $\ell > 2g_C-2$ or $ L = K_C$. In that case, one can define the Bottaccin--Markman Poisson structure on $\M_{\LL, H_0}(\v_{d + \delta})$, as we saw in Theorem \ref{tm Bottaccin-Markman Poisson str}, which restricts to a Poisson structure $\Theta_0$ on $\Mm^L_C(n,d)$. When $L = K_C$, $\Theta_0$ defines a symplectic form $\Omega_0$ on $\Mm_C(n,d)$. In the recent work \cite{biswas&bottaccin&gomez}, Biswas--Bottaccin--Gomez showed that this restriction coincides (up to scaling) with that obtained by extending the canonical symplectic form defined on the cotangent of the moduli space of stable vector bundles.

Since any divisor with null intersection with $\sigma_\infty$ is a multiple of $\sigma_0$, the restriction of \eqref{eq LePoitier} provides a morphism,
\begin{equation} \label{eq Hitchin system 2}
\Mm^L_C(n,d) \longrightarrow |n\sigma_0|_{\supp \cap \sigma_\infty = \emptyset},
\end{equation}
known as the {\it $L$-Hitchin fibration}, which takes the form
\begin{equation} \label{eq Hitchin system}
\Mm^L_C(n,d) \longrightarrow \Bb^L := \bigoplus_{i = 1}^n H^0(C,L^{\otimes i}),
\end{equation}
after the identification \ref{eq identification of Hitchin bases}. When $L=K_C$, this provides an algebraic completely integrable system \cite{hitchin_duke} called the {\it Hitchin system}.

\section{Involutions on Higgs moduli spaces from involutions on ruled surfaces}

\label{sc involutions and Higgs bundles}

\subsection{Some involutions on ruled surfaces}

\label{sc involutions on ruled surfaces}

We say that an involution on a variety equipped with a Poisson structure is Poisson if it preserves the Poisson structure under pull-back, and we call it anti-Poisson if the pull-back returns the inverse of the Poisson structure. In this section we study some involutions on $\LL$ and their relation with the Poisson structure.

The first involution we consider is
\begin{equation} \label{eq definition of -1_L}
\imath : \LL \to \LL,
\end{equation}
defined by multiplying by $-1$ the fibres of $L$. Note that its fixed locus is the disjoint union of the zero and infinity sections,
\begin{equation} \label{eq fixed locus on imath}
\Fix(\imath) = \sigma_\infty \sqcup \sigma_0.
\end{equation}

\begin{lemma} \label{lm imath_LL}
Every line bundle $J \in \Pic(\LL)$ is preserved by the involution $\imath$, {\it i.e.}
\[
\imath^*J \cong J.
\]
Suppose further that $\ell \geq 2g_C - 2$, then $\imath$ is an anti-Poisson involution with respect to any Poisson structure $\Theta_\LL$ defined on the ruled surface.
\end{lemma}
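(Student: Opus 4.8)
The plan is to prove the two assertions separately. For the first claim, that $\imath^*J \cong J$ for every $J \in \Pic(\LL)$, I would use the description of the Picard group recalled in \eqref{eq Pic of KK}, namely $\Pic(\LL) = (\ZZ \cdot \sigma_\infty) \oplus p^*\Pic(C)$. Since $\imath$ commutes with the projection $p : \LL \to C$ (it only rescales the fibres of $L$), it acts trivially on the summand $p^*\Pic(C)$; so it suffices to check $\imath^*\Oo_\LL(\sigma_\infty) \cong \Oo_\LL(\sigma_\infty)$. But the fixed locus of $\imath$ is $\sigma_\infty \sqcup \sigma_0$ by \eqref{eq fixed locus on imath}, so $\imath$ preserves the divisor $\sigma_\infty$ set-theoretically, hence $\imath^*\Oo_\LL(\sigma_\infty) \cong \Oo_\LL(\sigma_\infty)$. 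Combining these two facts with the direct sum decomposition gives $\imath^*J \cong J$ for all $J$. (Alternatively, since $\imath$ is an involution, $\imath^*$ acts on $\Pic(\LL)$ as an order-dividing-two automorphism fixing $p^*\Pic(C)$ and sending $\sigma_\infty$ to an effective divisor of the same numerical class; by \eqref{eq multiples of sigma infty} that forces $\imath^*\sigma_\infty \sim \sigma_\infty$.)

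For the second claim, I would first reduce to understanding the action of $\imath^*$ on $H^0(\LL, K_\LL^{-1})$, since by definition $\imath$ is anti-Poisson with respect to $\Theta_\LL$ precisely when $\imath^*\Theta_\LL = -\Theta_\LL$ under the identification of $\Theta_\LL$ with a section of $K_\LL^{-1} = \bigwedge^2 \Tt\,\LL$. By the first part of the lemma, $\imath^*K_\LL^{-1} \cong K_\LL^{-1}$, so $\imath^*$ acts linearly on the vector space $H^0(\LL, K_\LL^{-1})$ with $(\imath^*)^2 = \mathrm{id}$, hence this space splits into $\pm 1$ eigenspaces. The point will be to show that the eigenvalue on the relevant section is $-1$; once I know that, the anti-Poisson property follows for \emph{every} Poisson bivector, because under the hypothesis $\ell \geq 2g_C - 2$ any $\Theta_\LL$ lies in this space and — as I will argue — the whole space is the $-1$ eigenspace (or at least every nonzero section is an eigenvector with eigenvalue $-1$).

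To pin down the eigenvalue I would work in local coordinates adapted to the ruling. Away from $\sigma_\infty$ we have $\Tot(L)$, with a local fibre coordinate $w$ on $L$ and a local base coordinate $z$ on $C$; the involution is $\imath(z,w) = (z,-w)$. A local generator of $K_\LL^{-1}$ is $\partial_z \wedge \partial_w$, and $\imath^*(\partial_z \wedge \partial_w) = \partial_z \wedge (-\partial_w) = -(\partial_z \wedge \partial_w)$. Thus $\imath^*$ multiplies the local trivialisation by $-1$; since a global section of $K_\LL^{-1}$ is a coherent collection of local functions against these trivialisations and $\imath$ fixes $z$, the pullback of any such section is its negative. (One must check this is consistent across the transition to a chart near $\sigma_\infty$, where the fibre coordinate is $u = 1/w$ and $\imath(u) = -u$, giving again a sign $-1$ on $\partial_u$ — so the computation globalises.) This shows $\imath^*\Theta_\LL = -\Theta_\LL$ for any Poisson bivector, which is the assertion.

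The main obstacle I anticipate is the global bookkeeping in the second part: making precise that the pointwise sign computation in a fibre chart genuinely yields $\imath^*\Theta_\LL = -\Theta_\LL$ as sections of a nontrivial line bundle, rather than just "up to a twist". The clean way around this is exactly the two-step structure above — first establish $\imath^*K_\LL^{-1} \cong K_\LL^{-1}$ \emph{canonically} (using that $\imath$ preserves the ruling and the sections $\sigma_0, \sigma_\infty$, compatibly with \eqref{eq canonical line bundle of LL}), and only then do the local eigenvalue computation, which now lives in a genuine vector space. The hypothesis $\ell \geq 2g_C - 2$ enters only to guarantee $H^0(\LL, K_\LL^{-1}) \neq 0$ so that a Poisson structure exists in the first place (cf.\ the discussion after \eqref{eq canonical line bundle of LL}); the anti-Poisson identity itself holds section-by-section regardless.
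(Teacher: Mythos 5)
Your proposal is correct and follows essentially the same route as the paper: the first claim via the decomposition $\Pic(\LL) = (\ZZ\cdot\sigma_\infty)\oplus p^*\Pic(C)$ together with the fact that $\imath$ fixes $\sigma_\infty$ and commutes with $p$, and the second claim by observing that $d\imath$ has eigenvalues $1$ (base direction) and $-1$ (fibre direction), so $d\imath\wedge d\imath = -1$ and every section of $K_\LL^{-1}$ is anti-invariant. Your extra care about globalising the local sign computation is a welcome refinement but does not change the argument.
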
 

\begin{proof}
One naturally has that $\imath^*\Oo_\LL(\sigma_\infty) \cong \Oo_\LL(\imath(\sigma_\infty))$, hence $\Oo_\LL(\sigma_\infty)$ is preserved by $\imath$ after \eqref{eq fixed locus on imath}. As $\imath$ preserves the fibres of $p : \LL \to C$, every line bundle in $p^*\Pic(C)$ is $\imath$-invariant and the first statement follows from \eqref{eq Pic of KK}. In particular, $K_\LL^{-1}$ is invariant.

As $\imath$ is an automorphism of $\LL$, we have that $d \imath \wedge d \imath$ provides an isomorphism between $\imath^* K_\LL^{-1}$ and $K_\LL^{-1}$. Since the eigenvalues of $d \imath$ are constantly $1$ and $-1$, one has  
\[
d\imath \wedge d \imath = -1,
\]
and all the sections of $K_\LL^{-1}$ are $\imath$-anti-invariant. This proves the second statement.
\end{proof}

Consider an involution on the base curve
\[
\zeta_C : C \to C.
\]
We say that a line bundle $L$ on $C$ is $\zeta_C$-invariant if there exists an isomorphism 
\[
f : \zeta_C^*L \stackrel{\cong}{\to} L.
\]
In this case, the action of $\zeta_C$ lifts, via $f$, to an action on $L$ that can be easily extended to its completion $\LL$,
\begin{equation} \label{eq definition of eta_C*}
\zeta_{f} : \LL \to \LL. 
\end{equation}
Observe that the involution associated to $-f$ coincides with the composition $\imath \circ \zeta_f$. Denote the associated quotient map by
\begin{equation} \label{eq definition of q_LL}
\q_f : \LL \to \quotient{\LL}{\zeta_f}.
\end{equation}

\begin{remark} \label{rm descent}
Suppose that $\q_C : C \to D:= \quotient{C}{\zeta_C}$ gives a smooth curve. By Kempf's descent lemma \cite{kempf}, if $\zeta_f$ acts trivially on the fibres of the points fixed under $\zeta_C$, the line bundle $L$ descends to a line bundle $W$ on $D$. Let us denote the associated ruled surface by $\WW := \PP \left ( \Oo_D \oplus W \right )$. Under these conditions, the quotient is
\[
\quotient{\LL}{\zeta_f} \cong \WW.
\]
\end{remark}

\begin{lemma} \label{lm eta_LL}
The involutions $\imath$ and $\zeta_f$ preserve cohomology class of every line bundle on $\LL$. Therefore, every Mukai vector on $\LL$ is $\imath$-invariant and $\zeta_f$-invariant. 
\end{lemma}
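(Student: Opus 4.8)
The plan is to show that $\imath^*$ and $\zeta_f^*$ act as the identity on $H^{2*}(\LL,\ZZ)=H^0(\LL,\ZZ)\oplus H^2(\LL,\ZZ)\oplus H^4(\LL,\ZZ)$; the claim on Mukai vectors is then immediate, and the claim on cohomology classes of line bundles follows because these lie in the image of $\Pic(\LL)$ in $H^2(\LL,\ZZ)$. Since $\imath$ and $\zeta_f$ are biregular automorphisms of the connected projective surface $\LL$, they have degree one, so $\imath^*$ and $\zeta_f^*$ are automatically the identity on $H^0(\LL,\ZZ)\cong\ZZ$ and on $H^4(\LL,\ZZ)\cong\ZZ$. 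Thus everything reduces to the action on $H^2(\LL,\ZZ)$.

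First I would record that $H^2(\LL,\ZZ)$ is free of rank $2$, generated by the fibre class $[F]$ and by $[\sigma_\infty]$: this follows either from the Leray--Hirsch theorem applied to the $\PP^1$-bundle $p:\LL\to C$ (the term $H^1(C)\otimes H^1(\PP^1)$ vanishing), or from \eqref{eq Pic of KK} together with the fact that $\Pic^0(\LL)\cong\Pic^0(C)$ has trivial image in $H^2(\LL,\ZZ)$, so that $H^2(\LL,\ZZ)=\NS(\LL)=\ZZ\sigma_\infty\oplus\ZZ F$. Hence it suffices to check that each involution fixes $[F]$ and $[\sigma_\infty]$.

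For $\imath$ this is immediate from Lemma \ref{lm imath_LL}, which gives $\imath^*J\cong J$ for every $J\in\Pic(\LL)$, so $\imath^*$ is the identity on $\NS(\LL)=H^2(\LL,\ZZ)$ (alternatively, $\imath$ is an automorphism over $\mathrm{id}_C$, hence preserves fibres, and by \eqref{eq fixed locus on imath} fixes $\sigma_\infty$ and $\sigma_0$ pointwise). For $\zeta_f$, I would use its construction: it covers $\zeta_C$, i.e. $p\circ\zeta_f=\zeta_C\circ p$, so it maps each fibre of $p$ isomorphically onto a fibre and $\zeta_f^*[F]=[F]$. Moreover $\zeta_f$ is the extension to $\LL=\PP(\Oo_C\oplus L)$ of the lift of $\zeta_C$ to $L$ determined by $f:\zeta_C^*L\xrightarrow{\sim}L$; this lift is fibrewise linear, so it preserves the zero section $\sigma_0$ of $L$ and, after projectivising, also the section $\sigma_\infty$ coming from the zero section of $\Oo_C$, each being carried onto itself (with restriction $\zeta_C$ under $\sigma_0\cong\sigma_\infty\cong C$). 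In particular $\zeta_f(\sigma_\infty)=\sigma_\infty$, whence $\zeta_f^*[\sigma_\infty]=[\sigma_\infty]$, and $\zeta_f^*$ fixes both generators of $H^2(\LL,\ZZ)$.

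Putting these together, $\imath^*$ and $\zeta_f^*$ act trivially on $H^{2*}(\LL,\ZZ)$, so every line bundle has $\imath$- and $\zeta_f$-invariant cohomology class and every Mukai vector on $\LL$ is $\imath$- and $\zeta_f$-invariant. The only point requiring a little care — the main, if minor, obstacle — is to justify that an automorphism which preserves a curve (or the fibre class) merely setwise nonetheless fixes its cohomology class with the correct sign; this is handled by noting that the relevant restrictions are genuine biholomorphisms of $C$ (hence orientation- and degree-preserving) and that $H^2(\LL,\ZZ)$ coincides with $\NS(\LL)$, so no torsion or transcendental subtleties intervene.
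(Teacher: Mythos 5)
Your proof is correct and follows essentially the same route as the paper: both reduce the statement to the action on the classes $[\sigma_\infty]$ and $[F]$ generating the relevant part of $H^2(\LL,\ZZ)$ (via \eqref{eq Pic of KK}, respectively the N\'eron--Severi/Leray--Hirsch description), using Lemma \ref{lm imath_LL} for $\imath$ and the facts that $\zeta_f$ covers $\zeta_C$ and preserves $\sigma_\infty$ for the second involution. Your additional remarks on $H^0$ and $H^4$ only make explicit what the paper leaves implicit in deducing the invariance of Mukai vectors.
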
 

\begin{proof}
After Lemma \ref{lm imath_LL} it is enough to prove the lemma for $\zeta_f$. Pull-back under $\zeta_f$ preserves the line bundle $\Oo_\LL(\sigma_\infty)$ and those line bundles in $p^*\Pic(\LL)$ coming from $\zeta_C$-invariant line bundles. Even if not every line bundle in $p^*\Pic(\LL)$ is preserved by $\zeta_f$, the involution preserves the class of a fibre, $F$. Then, the first statement follows from \eqref{eq Pic of KK}. The second follows immediately from the first.
%
\end{proof}

As $\zeta_C$ is an automorphism of $C$, $\partial \zeta_C$ provides a natural isomorphism between $K_C$ and $\zeta_C^*K_C$. Its compactification, $\KK = \PP(\Oo_C \oplus K_C)$ is a ruled surface of topological invariant $2g_C -2$. Let us denote by 
\begin{equation} \label{eq eta_KK plus}
\zeta_\KK^+ : = \zeta_{\partial \zeta_C},
\end{equation}
and
\begin{equation} \label{eq eta_KK minus}
\zeta_\KK^- : = \zeta_{-\partial \zeta_C}.
\end{equation}

After \eqref{eq canonical line bundle of LL} the canonical bundle of $\KK$ is
\[
K_\KK \cong \Oo_\KK(-2\sigma_\infty),
\]
which trivializes on $\Tot(K_C)$, the complement of $\sigma_\infty$. Recalling \eqref{eq multiples of sigma infty}, one has a unique (up to scaling) Poisson structure $\Theta_\KK$ on $\KK$ defining a canonical symplectic structure on $\Tot(K_C)$. 

\begin{lemma} \label{lm eta_KK}
The involution $\zeta_\KK^+$ is Poisson while $\zeta_\KK^-$ is anti-Poisson. Their restriction to $\Tot(K_C)$ provide, respectively, a symplectic and an anti-symplectic involution.

Suppose that $\q_C : C \to D := C/\zeta_C$ gives a smooth curve. Then, the quotient $\KK/ \zeta_{\KK}^-$ is smooth.
\end{lemma}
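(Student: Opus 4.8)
The plan is to split the statement into its three assertions and treat them in turn, using the description of $\Theta_\KK$ as the Poisson structure trivializing the canonical symplectic form on $\Tot(K_C)$ together with the chain-rule behaviour of the pull-back on $K_\KK^{-1}\cong\bigwedge^2\Tt\,\KK$. First I would observe that $\zeta_\KK^{\pm}$ both lift the involution $\zeta_C$ on $C$ and both cover the identity on the base of the fibration $p:\KK\to C$ only over $\Fix(\zeta_C)$; away from that, on $\Tot(K_C)$ the relevant point is the action on the tautological one-form. On $\Tot(K_C)$ the canonical symplectic form is $d\lambda$ where $\lambda$ is the tautological $1$-form, and $\zeta_\KK^{+}=\zeta_{\partial\zeta_C}$ is by construction the lift of $\zeta_C$ through the canonical identification $\zeta_C^*K_C\cong K_C$ given by the codifferential $\partial\zeta_C$; hence $(\zeta_\KK^{+})^*\lambda=\lambda$, so $(\zeta_\KK^{+})^*d\lambda=d\lambda$ and $\zeta_\KK^{+}$ is symplectic on $\Tot(K_C)$. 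Since $\zeta_\KK^{-}$ differs from $\zeta_\KK^{+}$ by composition with $\imath$ (the fibrewise $-1$), and $\imath^*\lambda=-\lambda$, we get $(\zeta_\KK^{-})^*\lambda=-\lambda$ and $(\zeta_\KK^{-})^*d\lambda=-d\lambda$, i.e. $\zeta_\KK^{-}$ is anti-symplectic on $\Tot(K_C)$.

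Next, to pass from $\Tot(K_C)$ to the compactification $\KK$ and to the Poisson statement, I would argue that the Poisson bi-vector $\Theta_\KK\in H^0(\KK,K_\KK^{-1})$ is, up to scaling, the unique section (by \eqref{eq multiples of sigma infty}, since $K_\KK^{-1}\cong\Oo_\KK(2\sigma_\infty)$ and $h^0(\KK,\Oo_\KK(2\sigma_\infty))=1$), and it restricts on $\Tot(K_C)$ to the Poisson bi-vector dual to $d\lambda$. Because $\zeta_\KK^{\pm}$ are automorphisms of $\KK$, pull-back sends $\Theta_\KK$ to another global section of $K_\KK^{-1}$, hence to a scalar multiple $c^{\pm}\Theta_\KK$; the scalar is detected on the dense open set $\Tot(K_C)$, where the previous paragraph gives $c^{+}=1$ and $c^{-}=-1$. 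This simultaneously proves the first sentence (the Poisson/anti-Poisson statement on all of $\KK$) and the second (its restriction to $\Tot(K_C)$), modulo checking that $\zeta_\KK^{\pm}$ indeed preserve $\Tot(K_C)$ setwise, which is clear since $\imath$ and any $\zeta_f$ fix $\sigma_\infty$ by \eqref{eq fixed locus on imath} and the construction \eqref{eq definition of eta_C*}. Alternatively, and perhaps cleaner for $\KK$ directly, one can apply Proposition \ref{pr natural involutions preserve behaviour under Poisson structure}-type reasoning at the level of the surface: compute $d\zeta_\KK^{\pm}\wedge d\zeta_\KK^{\pm}$ on $K_\KK^{-1}$ as in the proof of Lemma \ref{lm imath_LL}, the eigenvalue bookkeeping giving $+1$ for $\zeta_\KK^{+}$ and $-1$ for $\zeta_\KK^{-}$ on the fibre direction (relative to the base the two agree).

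For the last assertion — that $\KK/\zeta_\KK^{-}$ is smooth when $\q_C:C\to D$ gives a smooth curve — I would analyze the fixed locus of $\zeta_\KK^{-}$ and check that the quotient singularities it could produce are in fact resolved. The fixed locus of $\zeta_\KK^{-}$ over a point $x\in\Fix(\zeta_C)$ is governed by the action of $\zeta_\KK^{-}$ on the fibre $\PP^1$ over $x$: the differential $\partial\zeta_C$ acts on $(K_C)_x$ by $\pm1$ according to whether $x$ is a ramification point contributing with the relevant sign, and the extra $-1$ from $\imath$ flips this; in all cases $\zeta_\KK^{-}$ acts on each fixed $\PP^1$-fibre either trivially or with two fixed points, and along such a fixed curve the normal direction is acted on by $-1$, so the fixed locus is a disjoint union of smooth curves (components of $\sigma_0$, $\sigma_\infty$, and possibly fibres over $\Fix(\zeta_C)$) of codimension $1$ — hence the involution is a reflection in codimension $1$ and the quotient of the smooth surface $\KK$ by it is again smooth. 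The one case to rule out is an isolated fixed point with the $\zeta_\KK^{-}$-action being $(-1,-1)$ on the tangent space, which would give an $A_1$ singularity; this is where the hypothesis that $\q_C:C\to D$ is smooth (equivalently, $\zeta_C$ is free, or the quotient is taken so that $D$ is smooth) enters to guarantee that at fixed fibres the surface involution restricts to a nontrivial involution of $\PP^1$ rather than to $-\id$ on a $2$-plane. I expect this last point — the precise local model of $\zeta_\KK^{-}$ at fixed points lying over the branch locus of $\q_C$, and verifying it is a codimension-one reflection — to be the main obstacle; everything about the Poisson/symplectic behaviour is a routine chain-rule computation once the tautological $1$-form is brought in.
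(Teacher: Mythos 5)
Your proof is correct, and it reaches the conclusion by a genuinely different route than the paper in both halves. For the Poisson/anti-Poisson statement, the paper reduces to $\zeta_\KK^+$ via Lemma \ref{lm imath_LL} and then computes the eigenvalues of $d\zeta_\KK^{+}$ on $\bigwedge^2 \Tt\,\KK$ to be $d\zeta_C$ and $\partial\zeta_C$ with product $1$ — exactly the ``eigenvalue bookkeeping'' you offer as your alternative; your primary argument via the tautological $1$-form $\lambda$ on $\Tot(K_C)$, combined with $h^0(\KK,\Oo_\KK(2\sigma_\infty))=1$ to propagate the sign from the dense open set to all of $\KK$, is a clean and valid substitute that in addition makes the second sentence of the lemma (the symplectic/anti-symplectic statement on $\Tot(K_C)$) immediate rather than a consequence of trivializing $K_\KK$ over the complement of $\sigma_\infty$. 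For the smoothness of $\KK/\zeta_\KK^-$, the paper instead invokes Kempf descent (Remark \ref{rm descent}): since $d\zeta_C=-1$ at fixed points forces $-\partial\zeta_C=+1$ there, $K_C$ descends to a line bundle $W$ on $D$ and the quotient is identified with the ruled surface $\WW=\PP(\Oo_D\oplus W)$, which gives smoothness plus an explicit description of the quotient that is used later (Lemma \ref{lm eta_Ss_0 = q_f_0}); your local analysis — the fixed locus of $\zeta_\KK^-$ is a divisor (the $\PP^1$-fibres over $\Fix(\zeta_C)$, or $\sigma_0\sqcup\sigma_\infty$ when $\zeta_C=\id$), so the involution is a codimension-one reflection and the quotient of a smooth surface is smooth — proves strictly the stated assertion but rests on the same key computation. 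Two small inaccuracies worth fixing: the sections $\sigma_0$ and $\sigma_\infty$ are not pointwise fixed by $\zeta_\KK^-$ unless $\zeta_C=\id$ (only their intersections with the fixed fibres are), and the parenthetical ``equivalently, $\zeta_C$ is free'' is wrong — smoothness of $D=C/\zeta_C$ is automatic for a curve quotient and has nothing to do with freeness; neither slip affects the argument.
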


\begin{proof}
For the first statement, thanks to Lemma \ref{lm imath_LL}, it is enough to prove that $\zeta_\KK^+$, is a Poisson involution as $\zeta_\KK^-$ is the composition of $\zeta_\KK^+$ with the anti-Poisson involution $\imath$. Suppose first that $\q_C$ is unramified, one can easily show that the eigenvalues of $d \zeta_\KK^+$ are $d\zeta_C$ and $\partial \zeta_C$, hence, $d \zeta_C \wedge \partial \zeta_C = 1$ and $\zeta_\KK^+ = \zeta_{\partial \zeta_C}$ is Poisson.

After Remark \ref{rm descent}, the second statement follows from the observation that $\zeta_{\KK}^-$ acts with eigenvalues $d\zeta_C$ and $-\partial \zeta_C$. On the points of $\Fix(\zeta_C)$, one has that $d\zeta_C = -1$ so $-\partial \zeta_C= 1$ and $\zeta_\KK^-$ acts trivially on the fibres of the line bundle $K_C$ over $\Fix(\zeta_C)$. The statement then follows from Remark \ref{rm descent}.
\end{proof}

\subsection{Natural involutions under the spectral correspondence}

\label{sc natural involution under spectral correspondence}

Starting from the involutions $\imath$ and $\zeta_f$ on $\LL$ defined in Section \ref{sc ruled surfaces}, one can consider the natural involutions $\hat{\imath}$ and $\widehat{\zeta}_f$ on the moduli space of sheaves $\M_{\LL, H_0}(\v_{d + \delta})$, as defined in Section \ref{sc natural involutions}. Also, we recall the involutions $\xi_{J,\LL}$ and $\lambda_{J,f} = \xi_{J,\LL} \circ \widehat{\zeta}_f$ defined in full generality in Section \ref{sc Prymian fibrations}. In this section we describe the counterpart of these involutions on the moduli space of $L$--Higgs bundles. We finish this section studying the particular case of $L = K_C$, associated to Higgs bundles (with no twisting). We find in this case that the previous involtions give rise to well known involutions that have been widely studied \cite{heller&schaposnik, garciaprada&ramanan, basu&garciaprada, garciaprada&wilkins}. 

\begin{lemma} \label{lm involutions and spectral correspondence}
Under the spectral correspondence, the involution $\hat{\imath}$ corresponds to the biregular involution
\begin{equation}
\morph{\Mm^L_C(n,d)}{\Mm^L_C(n,d)}{(E,\varphi)}{(E,-\varphi).}{}{\check{\imath}}
\end{equation}
If, given the involution $\zeta_C : C \to C$, $f$ provides an isomorphism between $L$ and $\zeta_C^*L$, then $\widehat{\zeta}_{f}$ corresponds under the spectral correspondence to the involution
\begin{equation} \label{eq description of hat eta}
\morph{\Mm^L_C(n,d)}{\Mm^L_C(n,d)}{(E,\varphi)}{(\zeta_C^*E,(\id_{\zeta_C^*E} \otimes f) \circ \zeta_C^*\varphi),}{}{\check{\zeta}_f}
\end{equation}
which is biregular.
\end{lemma}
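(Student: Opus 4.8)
The plan is to unwind the spectral correspondence functorially and track what the two surface involutions do to the $\Sym^\bullet(L^*)$-module structure. Recall that a Higgs bundle $(E,\varphi)$ corresponds to the $\Oo_{\Tot(L)}$-module $\Ee = \ker(p^*\varphi - \tau)$, and conversely $(E,\varphi) = (p_*\Ee,\, p_*(\id_\Ee\otimes\tau))$, where $\tau$ is the tautological section of $p^*L$. The key observation is that $p\circ\imath = p$ and $p\circ\zeta_f = \zeta_C\circ p$, so pulling back a spectral sheaf along $\imath$ or $\zeta_f$ interacts with $p_*$ in a controlled way.

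First I would treat $\hat\imath$. The automorphism $\imath$ is defined by scaling the fibres of $L$ by $-1$; on the total space this means $\imath^*\tau = -\tau$ (the tautological section changes sign). Since $p\circ\imath = p$, base change gives $p_*\imath^*\Ee \cong p_*\Ee = E$ canonically, and under this identification the operator $p_*(\id\otimes\tau)$ on $\imath^*\Ee$ becomes $p_*(\id\otimes\imath^*\tau) = -p_*(\id\otimes\tau) = -\varphi$. Hence $\hat\imath(\Ee) = \imath^*\Ee$ has spectral data $(E,-\varphi)$, which is exactly $\check\imath$. Biregularity is immediate: $\imath$ is a genuine automorphism of $\LL$ fixing the class of $\sigma_0$ and hence the locus $\supp\cap\sigma_\infty=\emptyset$, and $(E,\varphi)\mapsto(E,-\varphi)$ visibly preserves (semi/poly)stability since invariant subbundles are the same for $\varphi$ and $-\varphi$; alternatively invoke \eqref{eq compactification of Higgs space} together with the fact that $\hat\imath$ is biregular on $\M_{\LL,H_0}(\v_{d+\delta})$ because $\imath$ preserves $H_0$ and every Mukai vector (Lemma \ref{lm imath_LL}, Lemma \ref{lm eta_LL}).

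Next I would treat $\widehat\zeta_f$. Here $\zeta_f:\LL\to\LL$ covers $\zeta_C:C\to C$, i.e.\ $p\circ\zeta_f = \zeta_C\circ p$, and by construction the map $f:\zeta_C^*L\xrightarrow{\sim}L$ is built into $\zeta_f$ so that the tautological section transforms as $\zeta_f^*\tau = (\text{the section of }p^*\zeta_C^*L\text{ obtained from }\tau)$, which after applying $f$ becomes identified with $\tau$ itself on $\Tot(\zeta_C^*L)\cong\zeta_C^*\Tot(L)$. Concretely: $\zeta_f^*\Ee$ is a sheaf on $\LL$ whose $\Sym^\bullet(L^*)$-module structure is the $\zeta_C$-pullback of that of $\Ee$, twisted by $f$. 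Pushing forward via $p$ and using $p\circ\zeta_f=\zeta_C\circ p$ with flat base change, $p_*\zeta_f^*\Ee \cong \zeta_C^*p_*\Ee = \zeta_C^*E$, and the Higgs field becomes $p_*(\id\otimes\zeta_f^*\tau) = (\id_{\zeta_C^*E}\otimes f)\circ\zeta_C^*\varphi$ after identifying $\zeta_C^*\tau$ with $\tau$ through $f$. This is precisely $\check\zeta_f$ as in \eqref{eq description of hat eta}. Biregularity again follows because $\zeta_f$ is an automorphism of $\LL$ preserving $H_0$ (up to the choice made so that $\widehat\zeta_f$ is biregular, as in Section \ref{sc natural involutions}) and every Mukai vector by Lemma \ref{lm eta_LL}, and it preserves the locus $\supp\cap\sigma_\infty=\emptyset$ since $\zeta_f$ preserves $\sigma_\infty$ and the class of $\sigma_0$; stability is preserved because the spectral/Higgs stability notions agree \cite{schaub} and $\widehat\zeta_f$ is biregular on the ambient moduli of sheaves.

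The main obstacle, and the step deserving the most care, is the precise bookkeeping of the twist by $f$ in the second part: one must check that the identification $p_*\zeta_f^*\Ee\cong\zeta_C^*E$ intertwines the tautological multiplication operators correctly, so that the isomorphism $f:\zeta_C^*L\to L$ enters exactly as $\id_{\zeta_C^*E}\otimes f$ composed with $\zeta_C^*\varphi$ and not, say, with its inverse or on the wrong side. This is essentially the statement that the spectral construction is natural with respect to isomorphisms of the pair $(C,L)$, and the cleanest way to phrase it is to note that an isomorphism of pairs $(\zeta_C, f):(C,\zeta_C^*L)\to(C,L)$ induces an isomorphism $\Tot(\zeta_C^*L)\cong\zeta_C^*\Tot(L)\to\Tot(L)$ compatible with tautological sections, and then transport the $\Sym^\bullet$-module description through it. Once that compatibility is pinned down, both claims follow formally from $p$ being affine and the equivalence of categories underlying the spectral correspondence.
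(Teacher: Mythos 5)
Your proposal is correct and follows essentially the same route as the paper's proof: both arguments rest on the identities $p\circ\imath=p$ and $p\circ\zeta_f=\zeta_C\circ p$, push forward the tautological multiplication $\id_\Ee\otimes\tau$, and use the transformation law of $\tau$ under the involution (the paper writes this as $\tau=p^*f^{-1}\circ\zeta_f^*\tau$, which is exactly the bookkeeping of the twist by $f$ you flag as the delicate step). Your additional remarks on biregularity via stability preservation match the paper's opening observation.
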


\begin{proof}
We begin by observing that $\check{\imath}$ and $\check{\zeta}_f$ preserve (semi)stability of Higgs bundles, so both a regular morphisms within the moduli of Higgs bundles. Recall that the spectral correspondence is realized by taking the push-forward under $p: \LL \to C$, obtaining the Higgs field by pushing-forward the morphism given by tensoring by the tautological section $\tau$, $\varphi = p_* (\id_\Ee \otimes \tau)$. In the first case, observe that $p \circ \imath = p$, so $\check{\imath}(E) = E$ and   
\[
\check{\imath}(\varphi) = p_*\left  (\id_{(\imath_{\LL})^*\Ee} \otimes \tau \right ) = p_* \imath_{\LL,*} \left  (\id_{(\imath_{\LL})^*\Ee} \otimes \tau \right ) = p_* (\id_\Ee \otimes (-\tau)) = -\varphi,
\]
what concludes the proof of the first statement. For the second statement note that $p \circ \zeta_f = \zeta_C \circ p$, which implies that $\check{\zeta}_f(E) = \zeta_C^*E$. Then, 
\[
\zeta_{C,*} \check{\zeta}_f(\varphi) = \zeta_{C,*} p_* \left ( \id_{\zeta_f^*\Ee} \otimes \tau \right ) = p_* \zeta_{f,*} \left ( \id_{\zeta_f^*\Ee} \otimes \tau \right ) =  p_* \left ( \id_{\Ee} \otimes \zeta_f^*\tau \right )
\]
Note also that the tautological section satisfies that 
\[
\tau = p^*f^{-1} \circ \zeta_f^*\tau,
\]
so,
\[
\zeta_{C,*} \check{\zeta}_f(\varphi) =  p_* \left ( \left ( \id_{\Ee} \otimes p^*f \right ) \circ \left ( \id_{\Ee} \otimes \tau \right ) \right ) = \left ( \id_{E} \otimes f \right ) \circ \varphi,
\]
from which the proof follows.
\end{proof}

Let us denote the fixed point locus of $\check{\zeta}_f$ by 
\[
\Nn_C^L(n,d) := \Fix(\check{\zeta}_f),
\]
which is, naturally, a closed subvariety of $\Mm_C^L(n,d)$.

As we saw in Remark \ref{rm descent}, Kempf descent lemma says that the line bundle $L$ descends to a line bundle $W$ on $D$ if the action of $f$ is trivial on the ramification locus of $\q_C: C \to D$. In that case, the quotient $\LL / \zeta_f$ is the ruled surface $\WW$. Observe, as well, that in that case $L \cong \q_C^*W$.

\begin{lemma} \label{lm Nn supported on vector space}
Under the spectral correspondence, $\Nn_C^L(n,d)$ embedds into $\N_{\LL, H_0}(\v_{d + \delta}, nD)$. Furthermore, the Hitchin fibration restricts there to
\begin{equation} \label{eq Hitchin fibration for N} 
\Nn_C^L(n,d) \to  \bigoplus_{i = 1}^n \q_C^*H^0(D, W^{\otimes i}).
\end{equation}
\end{lemma}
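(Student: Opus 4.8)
The plan is to trace through the spectral correspondence together with the descent picture from Remark \ref{rm descent}, and to identify the two assertions as fibrewise consequences of Lemma \ref{lm involutions and spectral correspondence}. First I would recall that, under the hypotheses in force (the action of $f$ being trivial on the ramification locus of $\q_C$, so that $L \cong \q_C^*W$ and $\LL/\zeta_f \cong \WW$), the involution $\zeta_f$ on $\LL$ is exactly an instance of the set-up of Section \ref{sc natural involutions}, with $S = \LL$, $T = \WW$, $C = \sigma_0 \subset \LL$ the zero section, and $D = \sigma_0^{\WW} \subset \WW$ the zero section of $W$, so that $\sigma_0 = \sigma_0^{\WW} \times_\WW \LL$. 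Hence $\widehat{\zeta}_f$ is the natural involution $\widehat{\zeta}_S$ on $\M_{\LL,H_0}(\v_{d+\delta})$, its fixed locus is $\N_{\LL,H_0}(\v_{d+\delta})$, and restricting to spectral data supported away from $\sigma_\infty$ we obtain $\N_{\LL,H_0}(\v_{d+\delta}, n\sigma_0^{\WW})$ as in \eqref{eq N over B}. Since, by Lemma \ref{lm involutions and spectral correspondence}, $\widehat{\zeta}_f$ corresponds under the spectral correspondence to $\check{\zeta}_f$, the fixed locus $\Nn_C^L(n,d) = \Fix(\check{\zeta}_f)$ corresponds to $\Fix(\widehat{\zeta}_f)$ intersected with the Higgs locus \eqref{eq compactification of Higgs space}, which lands inside $\N_{\LL,H_0}(\v_{d+\delta}, n\sigma_0^{\WW})$; this gives the asserted embedding. (One should note here that $\Nn_C^L(n,d)$ is automatically closed, so no passage to closures is needed.)

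For the statement about the Hitchin base, I would compose the Hitchin fibration \eqref{eq Hitchin system 2}, which is the restriction of the Le Potier support morphism $\h_\LL$, with the identification \eqref{eq Hitchin system} $\Bb^L \cong \bigoplus_{i=1}^n H^0(C, L^{\otimes i})$. A spectral datum $\Ee$ lies in $\Nn_C^L(n,d)$ iff $(\widehat{\zeta}_f)$ fixes it, which in particular forces its support curve $\supp(\Ee) \in |n\sigma_0|$ to be $\zeta_f$-invariant; by the discussion preceding \eqref{eq support morphism for N} such curves form the locus $\q_f^*|n\sigma_0^{\WW}|$, i.e. curves pulled back from $\WW$. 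Under the identification \eqref{eq identification of Hitchin bases} of $|n\sigma_0|_{\supp\cap\sigma_\infty=\emptyset}$ with $\bigoplus_{i=1}^n H^0(C, L^{\otimes i})$, and the analogous identification for $\WW$, the pull-back of curves $\q_f^*$ corresponds to the pull-back of sections $\q_C^*$ on each graded piece $H^0(D, W^{\otimes i}) \hookrightarrow H^0(C, L^{\otimes i})$ — this uses $L \cong \q_C^*W$ so that $L^{\otimes i} \cong \q_C^*(W^{\otimes i})$. Therefore the Hitchin fibration restricted to $\Nn_C^L(n,d)$ factors through $\bigoplus_{i=1}^n \q_C^*H^0(D, W^{\otimes i})$, which is \eqref{eq Hitchin fibration for N}.

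The main obstacle I anticipate is not conceptual but bookkeeping: one has to check carefully that the identification \eqref{eq identification of Hitchin bases}, which was derived via the decomposition of $\Oo_{n\sigma_0}$ as an $\Oo_C$-module and the projection formula, is compatible with the two involutions — that is, that the graded splitting $\bigoplus_i H^0(C, L^{\otimes i})$ is preserved by the action induced by $\zeta_f$ on $H^0(n\sigma_0, \Oo_\LL(n\sigma_0)|_{n\sigma_0})$, and that on the $i$-th graded piece this action is precisely $\zeta_C^*$ twisted by $f^{\otimes i}$, whose fixed points are $\q_C^*H^0(D, W^{\otimes i})$ by Kempf descent. This amounts to verifying that the isomorphism $\Ii_C/\Ii_C^2 \cong L^{-1}$ used in deriving \eqref{eq identification of Hitchin bases} is $\zeta_f$-equivariant, which follows from the definition of $\zeta_f$ as the lift of $\zeta_C$ through $f$, but should be stated. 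Everything else — preservation of (semi)stability, closedness of the fixed locus, the fact that $\Nn_C^L(n,d)$ maps into the $\supp\cap\sigma_\infty=\emptyset$ locus — is already contained in or immediate from the earlier results.
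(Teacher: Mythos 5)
Your proposal is correct and follows essentially the same route as the paper: identify the fixed locus of $\widehat{\zeta}_f$ on the (affine part of the) linear system $|n\sigma_0|$ with the $+1$-eigenspace of sections, recognise this as $\q_\LL^*|nD|$, i.e. $\bigoplus_i \q_C^*H^0(D,W^{\otimes i})$ after the identification \eqref{eq identification of Hitchin bases}, and deduce both the embedding and the restriction of the Hitchin fibration from Lemma \ref{lm involutions and spectral correspondence}. The paper's proof is terser and leaves the equivariance of \eqref{eq identification of Hitchin bases} and the Kempf-descent identification of the graded pieces implicit, whereas you spell these out; nothing in your argument diverges from or adds beyond what the paper's "the rest of the proof follows easily" is meant to cover.
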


\begin{proof}
We recall from \eqref{eq Hitchin system 2} that the image of the support (Hitchin) morphism is contained in (an open subset of) a linear system $|n\sigma_0|$ in $\LL$. The fixed locus under the action of $\widehat{\zeta}_f$ on a linear system is disconnected and given by the union of the projectivization of the $+1$ and $-1$ eigenspaces of the associated space of sections. Observe that the component associated to $+1$ coincides with $\q_\LL^*|nD|$, containing the open subset $\bigoplus_{i = 1}^n \q_C^*H^0(D, W^{\otimes i})$. The rest of the proof follows easily from this observation.
\end{proof}

One can consider the morphism
\begin{equation} \label{eq definition of hatq HB}
\morph{\Mm^W_D(n,d')}{\Nn^L_C(n,2d') \subset \Mm^L_C(n,2d')}{(E,\varphi)}{(\q_C^*E, \q_C^*\varphi),}{}{\widehat{\q}_C}
\end{equation}
which corresponds to \eqref{eq definition of hatq} under the spectral correspondence. If the spectral curves in $\Tot(L)$ are ramified over the spectral curves in $\Tot(W)$, then, Proposition \ref{pr q embedding on ssl} implies that $\widehat{\q}_C$ is generically 1:1, being an isomorphism when we restrict to the locus of smooth curves over smooth curves (corresponding to the locus indexed by $\ssl$). If, on the contrary, $\zeta_C$ is unramified, Proposition \ref{pr q embedding on ssl} says that $\widehat{\q}_C$ factors through
\[
\xymatrix{
\Mm^W_D(n,d') \ar[rr]^{\widehat{\q}_S} \ar[rd] & & \Nn^L_C(n,2d') \subset \Mm^L_C(n,2d')
\\
 & \quotient{\Mm^W_D(n,d')}{\ZZ_2}, \ar[ur]_{\quad \textnormal{generically} \, 1:1} & 
}
\]
where $\ZZ_2$ acts under tensorization by the $2$-torsion line bundle on $D$ associated to the unramified $2:1$ cover $\q_C : C \to D$.

Suppose one can pick an $\zeta_f$-invariant line bundle $J$ such that condition \eqref{eq condition for J and v} holds with respect to $\v_{d + \delta}$. Observe that, after \eqref{eq Pic of KK} and since $\sigma_\infty \cdot \sigma_0 = 0$, one has that the restriction of $J$ to any curve $A$ on $|n \sigma_0|$ is
\begin{equation} \label{eq description of J at C}
J|_{A} \cong p^* J_C,
\end{equation}
for some $J_C \in \Pic(C)$.

We now consider the involution $\xi_{J,\LL}$ and the composition $\lambda_{J_0,f} = \xi_{J,\LL} \circ \widehat{\zeta}_f$ which is another involution as $\xi_{J,\LL}$ and $\widehat{\zeta}_f$ commute due to the $\zeta_f$-invariance of $J$. Let us also describe the counterpart of these involutions in $\Mm^L_C(n,d)$. 

\begin{lemma} \label{lm involutions and spectral correspondence 2}
Under the spectral correspondence, the involution $\xi_{J,\LL}$ corresponds to
\begin{equation}
\morph{\Mm^L_C(n,d)}{\Mm^L_C(n,d)}{(E,\varphi)}{(E^*\otimes J_C L,\varphi^t).}{}{\check{\xi}_{J_C,L}}
\end{equation}
Also, $\lambda_{J,f}$ corresponds to
\begin{equation} \label{eq lambda}
\morph{\Mm^L_C(n,d)}{\Mm^L_C(n,d)}{(E,\varphi)}{(\zeta_C^*E^* \otimes J_C L,(\id_{\zeta_C^*E} \otimes f)^t \circ \zeta_C^*\varphi^t),}{}{\check{\lambda}_{J_C,f}}
\end{equation}
both being biregular involutions. 
\end{lemma}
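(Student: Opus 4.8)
The plan is to push both involutions through the spectral correspondence of Section~\ref{sc L-Higgs bundles}, which realises $(E,\varphi)$ as the pure sheaf $\Ee=\ker(p^*\varphi-\tau)$ on $\Tot(L)$, supported on the spectral curve $A\in|n\sigma_0|$ away from $\sigma_\infty$, and recovers $(E,\varphi)$ from $\Ee$ as $E=\pi_*\Ee$ and $\varphi=\pi_*(\id_\Ee\otimes\tau)$, where $\pi:=p|_A\colon A\to C$ is the spectral cover: it is finite and flat of degree $n$, and Gorenstein since $A$ is a planar curve. Since Lemma~\ref{lm involutions and spectral correspondence} already handles $\widehat{\zeta}_f$, and $\lambda_{J,f}=\xi_{J,\LL}\circ\widehat{\zeta}_f$ (equivalently $\widehat{\zeta}_f\circ\xi_{J,\LL}$, as these commute by the $\zeta_f$-invariance of $J$), the one thing to prove is that if $\Ee$ is the spectral datum of $(E,\varphi)$ then $\Ee \! xt^1_\LL(\Ee,J)$ is the spectral datum of $(E^*\otimes J_C L,\varphi^t)$; afterwards $\check{\lambda}_{J_C,f}$ is obtained by composing $\check{\xi}_{J_C,L}$ with $\check{\zeta}_f$ and using that transposition commutes with $\zeta_C^*$. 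One first notes that $\Ee \! xt^1_\LL(-,J)$ does preserve the Higgs locus \eqref{eq compactification of Higgs space}: fibrewise it is the dualizing involution on $\overline{\Jac}^{\, \H}_A(a)$ by \eqref{eq eta_J C'}, so it fixes the support curve $A\in|n\sigma_0|$ (hence away from $\sigma_\infty$) and keeps the rank equal to $1$ on each component, while \eqref{eq condition for J and v} keeps the degree at $d$.

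The computation rests on two line-bundle identifications on $A$. The tautological section $\tau$ of $p^*L$ cuts out the zero section of $\Tot(L)=\LL\setminus\sigma_\infty$, so $\Oo_\LL(\sigma_0)|_{\Tot(L)}\cong p^*L$ and hence $\Oo_\LL(A)|_A\cong\pi^*L^{\otimes n}$; combined with $J|_A\cong p^*J_C$ from \eqref{eq description of J at C}, this gives the twist $J(A)|_A\cong\pi^*(J_C\otimes L^{\otimes n})$ entering the fibrewise description \eqref{eq fibrewise description of xi}. For the relative dualizing sheaf of $\pi$, adjunction on $\Tot(L)$ together with \eqref{eq canonical line bundle of LL} yields $\omega_A\cong\pi^*(K_C\otimes L^{\otimes(n-1)})$, whence $\omega_\pi=\omega_A\otimes\pi^*\omega_C^{-1}\cong\pi^*L^{\otimes(n-1)}$, which is pulled back from $C$.

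The core step is Grothendieck--Verdier (relative) duality for the finite flat Gorenstein morphism $\pi$: as $\Ee$, viewed as an $\Oo_A$-module, is pure of dimension one (maximal Cohen--Macaulay on the Gorenstein curve $A$), one has a natural isomorphism $\pi_*\Hh \! om_A(\Ee,\pi^{!}\Gg)\cong\Hh \! om_C(\pi_*\Ee,\Gg)=E^*\otimes\Gg$ for every line bundle $\Gg$ on $C$, with $\pi^{!}\Gg=\pi^*\Gg\otimes\omega_\pi=\pi^*(\Gg\otimes L^{\otimes(n-1)})$. Taking $\Gg=J_C\otimes L$ makes $\pi^{!}\Gg\cong J(A)|_A$, so by \eqref{eq fibrewise description of xi} the underlying bundle of $\check{\xi}_{J_C,L}(E,\varphi)$ is $\pi_*\Hh \! om_A(\Ee,J(A)|_A)\cong E^*\otimes J_C L$. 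To get the Higgs field I would track the $\Oo_A$-module structure: multiplication by $\tau$ on $\Hh \! om_A(\Ee,J(A)|_A)$ is precomposition with multiplication by $\tau$ on $\Ee$, which under the displayed isomorphism becomes precomposition with $\varphi$ on $E$ --- that is, precisely $\varphi^t$ on $E^*\otimes J_C L$ --- so pushing forward gives Higgs field $\varphi^t$ and $\check{\xi}_{J_C,L}(E,\varphi)=(E^*\otimes J_C L,\varphi^t)$. Composing with $\check{\zeta}_f$ from Lemma~\ref{lm involutions and spectral correspondence} and unwinding the transpose yields \eqref{eq lambda}. Biregularity is inherited from that of $\xi_{J,\LL}$ and $\widehat{\zeta}_f$ (equivalently, dualization, transposition, pull-back by $\zeta_C$ and twist by a fixed line bundle are algebraic and preserve semistability), and the involution property is the identities $(E^*\otimes J_C L)^*\otimes J_C L\cong E$, $(\varphi^t)^t=\varphi$, and their $\zeta_C$-twisted analogues.

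I expect the main obstacle to be the bookkeeping inside the duality step: getting $\omega_\pi$ right and threading $L^{\otimes n}$, $L^{\otimes(n-1)}$ and $J_C$ through relative duality and the projection formula so that the extra powers of $L$ cancel exactly, leaving $E^*\otimes J_C L$. A secondary subtlety is checking that the relative-duality isomorphism is natural enough to intertwine the two $\tau$-actions (i.e.\ the two Higgs structures); as a safeguard one can first establish the description of $\check{\xi}_{J_C,L}$ over the dense open locus of \emph{smooth} spectral curves, where $\pi$ is an honest finite cover and everything in sight is a line bundle, and then extend it to all of $\Mm^L_C(n,d)$ by density, both sides being biregular involutions that agree on a dense open subset.
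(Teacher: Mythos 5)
Your proposal follows essentially the same route as the paper's proof: identify $J(A)|_{A}\cong p^*(J_C L^{n})$, recognize $\omega_{A/C}\cong p^*L^{n-1}$ so that $J(A)|_{A}\cong p^{!}(J_C L)$, and conclude by Grothendieck--Verdier duality for the spectral cover, then obtain $\check{\lambda}_{J_C,f}$ by composing with Lemma \ref{lm involutions and spectral correspondence}. Your write-up is in fact slightly more complete than the paper's, which computes only the underlying bundle and leaves the identification of the Higgs field with $\varphi^t$ implicit.
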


\begin{proof}
After \eqref{eq fibrewise description of xi}, 
\[
\check{\xi}_{J_C,L}(E) = p_* \Hh \! om_{A}(\Ff|_{A}, J(A)|_{A}) \cong p_* \Hh \! om_{A}(\Ff|_{A}, p^*(J_C L^{n})),
\]
where, in this identification, we have made use of \eqref{description of self intersection of sigma_0} and \eqref{eq description of J at C}. 
Thanks to \eqref{eq canonical line bundle of LL}, we observe that the relative canonical sheaf $\omega_{A/C}$ is $p^*L^{n-1}$. Hence, 
\[
\check{\xi}_{J,\LL}(E) \cong p_* \Hh \! om_{A}(\Ff|_{A}, p^*(J_C L) \otimes \omega_{A/C}) = p_* \Hh \! om_{A}(\Ff|_{A}, p^!(J_C L)) \cong E^* \otimes J_C L,
\]
and the first statement follows. The second statement is a direct consequence of the first and Lemma \ref{lm involutions and spectral correspondence}. 
\end{proof}

\begin{remark}
Recall from Remark \ref{rm choice of J_0 and v_0} that, choosing $\v_0$ as Mukai vector and $J_0 = \Oo_\LL(-n\sigma_0)$, one gets $\xi_{J_0,L}$ is the dualizing involution and
\[
\morph{\Mm^L_C(n,-\delta)}{\Mm^L_C(n,-\delta)}{(E,\varphi)}{(E^*\otimes L^{1-n},\varphi^t).}{}{\check{\xi}_{J_0,L}}
\]
\end{remark}

\begin{remark}
For $\delta$ as in \eqref{eq definition of delta}, the pair of Mukai vector $\v_\delta$ and line bundle $J_\delta = \Oo_\LL(-\sigma_0)$ satisfies condition \eqref{eq condition for J and v}. In that case, one gets
\[
\morph{\Mm^L_C(n,0)}{\Mm^L_C(n,0)}{(E,\varphi)}{(E^*,\varphi^t).}{}{\check{\xi}_{J_{\delta,C},L}}
\]
\end{remark}

Let us denote the fixed point locus of $\check{\lambda}_{J_C, L}$ by
\[
\Pp_C^L(n,d,J_C) := \Fix(\check{\lambda}_{J_C,
L}).
\]
Observe that $\Nn_C^L(n,d)$ and $\Pp_C^L(n,d, J)$ are given by the restriction of $\N_{\LL, H_0}(\v_{d + \delta})$ and $\P_{\LL, H_0}(\v_{d + \delta},J)$ in \eqref{eq compactification of Higgs space}. Recalling the subvarieties $\N_{\LL, H_0}(\v_{d + \delta}, nD)$ and $\P_{\LL, H_0}(\v_{d + \delta},J, nD)$ from \eqref{eq N over B} and \eqref{eq P over B}, respectively, we study their relation with $\Nn_C^L(n,d)$ and $\Pp_C^L(n,d, J)$.

\begin{lemma} \label{lm Pp supported on vector space}
Under the spectral correspondence, $\Pp_C^L(n,d, J_C)$ embedds into $\P_{\LL, H_0}(\v_{d + \delta},J, nD)$~. Furthermore, the Hitchin fibration restricts to
\begin{equation} \label{eq Hitchin fibration for P}
\Pp_C^L(n,d, J_C) \to  \bigoplus_{i = 1}^n \q_C^*H^0(D, W^{\otimes i}),
\end{equation}
where $W$ is described in Remark \ref{rm descent}. 
\end{lemma}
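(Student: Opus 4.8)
The plan is to mirror, step by step, the argument already used for the natural subvariety in Lemma \ref{lm Nn supported on vector space}, adapting it to the composed involution $\check\lambda_{J_C,L} = \check\xi_{J_C,L}\circ\check\zeta_f$. First I would recall that $\Pp_C^L(n,d,J_C)=\Fix(\check\lambda_{J_C,L})$ is by construction the intersection of $\Fix(\lambda_{J,\LL})=\P_{\LL,H_0}(\v_{d+\delta},J)$ with the Higgs locus \eqref{eq compactification of Higgs space}, i.e.\ with the sheaves supported away from $\sigma_\infty$. So the content to verify is that this intersection lands inside the piece $\P_{\LL,H_0}(\v_{d+\delta},J,nD)$ cut out over the component $\q_\LL^*|nD|$ of the Hilbert scheme, and that the Hitchin (support) map restricts to \eqref{eq Hitchin fibration for P}.

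The key step is to control the image of the support morphism on $\Fix(\lambda_{J,\LL})$. Since $\lambda_{J,\LL}=\xi_{J,\LL}\circ\widehat\zeta_f$ and $\xi_{J,\LL}$ preserves the support of a sheaf (by \eqref{eq fibrewise description of xi}, dualizing over $A$ does not move $A$), a fixed point of $\lambda_{J,\LL}$ must have a support curve fixed by $\widehat\zeta_f$. As in the proof of Lemma \ref{lm Nn supported on vector space}, within the linear system $|n\sigma_0|$ the $\widehat\zeta_f$-fixed locus is the union of the projectivizations of the $\pm1$-eigenspaces of the induced action on $H^0(\LL,\Oo_\LL(n\sigma_0))$, and the $+1$-eigenspace component is exactly $\q_\LL^*|nD|$, which on the Higgs locus (supports disjoint from $\sigma_\infty$) carries the open subset $\bigoplus_{i=1}^n \q_C^*H^0(D,W^{\otimes i})$ via the identifications \eqref{eq identification of Hitchin bases} and Remark \ref{rm descent} (recall $L\cong\q_C^*W$). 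Here one also needs that the involution $\check\lambda_{J_C,L}$ from Lemma \ref{lm involutions and spectral correspondence 2} indeed sends the Hitchin base to the $+1$-eigenspace: transposing $\varphi$ leaves the characteristic coefficients $\mathrm{tr}(\wedge^i\varphi)$ unchanged, and pulling back by $\zeta_C$ acts on them, so that the base points fixed by $\check\lambda_{J_C,L}$ are precisely the $\zeta_C$-invariant ones, i.e.\ those in $\bigoplus_{i=1}^n\q_C^*H^0(D,W^{\otimes i})$. This gives the embedding $\Pp_C^L(n,d,J_C)\hookrightarrow \P_{\LL,H_0}(\v_{d+\delta},J,nD)$ and the factorization of the Hitchin map through \eqref{eq Hitchin fibration for P}.

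The main obstacle, and the only point requiring real care, is the matching of the two descriptions of the involution on the Hitchin base: one must check that the $+1$-eigenspace for the linear $\widehat\zeta_f$-action on $H^0(\LL,\Oo_\LL(n\sigma_0))$ corresponds, under the identification \eqref{eq identification of Hitchin bases} with $\bigoplus_i H^0(C,L^{\otimes i})$, to the subspace $\bigoplus_i\q_C^*H^0(D,W^{\otimes i})$, and not to some twisted variant (the composition with $\imath$ could flip signs eigenspace-by-eigenspace, as noted after \eqref{eq definition of eta_C*}). This is where the hypothesis that $f$ acts trivially on the ramification of $\q_C$ (Remark \ref{rm descent}), hence $L\cong\q_C^*W$ with the natural linearization, is used: it guarantees the identification is the untwisted pullback on each summand $H^0(C,L^{\otimes i})=H^0(C,\q_C^*W^{\otimes i})$, whose $\zeta_C$-invariants are $\q_C^*H^0(D,W^{\otimes i})$ by the projection formula. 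Once this is pinned down, the remainder of the proof is routine and parallels Lemma \ref{lm Nn supported on vector space} verbatim.
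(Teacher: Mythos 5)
Your proposal is correct and follows essentially the same route as the paper: the paper's proof of this lemma is literally ``mutatis mutandis, as in Lemma \ref{lm Nn supported on vector space}'', i.e.\ reduce to the observation that $\xi_{J,\LL}$ preserves supports, so fixed points of $\lambda_{J,\LL}$ have $\widehat{\zeta}_f$-invariant spectral curves, and then identify the relevant component of the fixed locus in $|n\sigma_0|$ with $\bigoplus_i \q_C^*H^0(D,W^{\otimes i})$ exactly as you do. Your extra care about matching the $+1$-eigenspace with the untwisted pullback subspace is a worthwhile elaboration of a point the paper leaves implicit, but it is not a different argument.
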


\begin{proof}
Mutatis mutandis, the proof follows as the proof of Lemma \ref{lm Nn supported on vector space}.
\end{proof}

Sometimes in the literature, in the particular case of $L = K_C$ and $f = \partial \zeta_C$, one abbreviates $(\id_{\zeta_C^*E} \otimes \partial \zeta_C) \circ \zeta_C^*\varphi$ simply by $\zeta_C^*\varphi$. Therefore, the natural involution associated to  $\zeta_\KK^+ = \zeta_{\partial \zeta_C}$, as described in \eqref{eq eta_KK plus} and \eqref{eq eta_KK minus}, would be expressed as
\begin{equation} \label{eq eta K}
\morph{\Mm_C(n,d)}{\Mm_C(n,d)}{(E,\varphi)}{(\zeta_C^*E, \pm\zeta_C^*\varphi),}{}{\check{\zeta}_K^\pm}
\end{equation}
under the spectral correspondence. This immediately implies that the counterpart of $\lambda_{J, \KK}^\pm = \xi_{J,\KK} \circ \widehat{\zeta}_K^\pm$ on the moduli space of Higgs bundles is
\begin{equation} \label{eq lambda K}
\morph{\Mm_C(n,d)}{\Mm_C(n,d)}{(E,\varphi)}{(\zeta_C^*E^* \otimes J_CK_C, \pm\zeta_C^*\varphi^t).}{}{\check{\lambda}_{J_C,K}^\pm}
\end{equation}

The involutions $\check{\zeta}_K^\pm$, along with their fixed point locus, have been studied by Heller--Schaposnik \cite{heller&schaposnik} and Garcia-Prada--Wilkins \cite{garciaprada&wilkins}. The involutions $\check{\lambda}_{J, K}^\pm$ are covered by the general study of Basu and Garcia-Prada \cite{basu&garciaprada} of automorphisms of the moduli space of Higgs bundles. We combine their results in the following corollary, that could also derived from Propositions \ref{pr natural involutions preserve behaviour under Poisson structure} and \ref{pr eta antisymplectic}.

\begin{corollary}
The involution $\check{\zeta}_{K}^+$ is symplectic and its fixed locus $\Nn_C^+(n,d)$ is a holomorphic symplectic subvariety of the Higgs moduli space $\Mm_C(n,d)$, while $\check{\zeta}_{K}^-$ is anti-symplectic, and defines a Lagrangian subvariety $\Nn_C^-(n,d)$ of $\Mm_C(n,d)$. 

Similarly, $\check{\lambda}_{J_C, K}^+$ is an anti-symplectic involution and its fixed locus $\Pp_C^+(n,d, J_C)$ a Lagrangian subvariety of $\Mm_C(n,d)$, while $\check{\lambda}_{J_C,K}^-$ is symplectic and fixes a holomorphic symplectic subvariety $\Pp_C^-(n,d, J_C)$. 
\end{corollary}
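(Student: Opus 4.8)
The plan is to deduce all four assertions from the general Poisson-theoretic statements established above, transported through the spectral correspondence; alternatively, self-contained proofs can be read off from \cite{heller&schaposnik, garciaprada&wilkins, basu&garciaprada}. Taking $L=K_C$ the relevant ruled surface is $\KK=\PP(\Oo_C\oplus K_C)$, which carries the canonical Poisson structure $\Theta_\KK$ inducing the standard symplectic form on $\Tot(K_C)$. By Theorem~\ref{tm Bottaccin-Markman Poisson str} this $\Theta_\KK$ lifts to a closed Poisson structure on $\M_{\KK,H_0}(\v_{d+\delta})$, and by \eqref{eq compactification of Higgs space} together with \cite{biswas&bottaccin&gomez} its restriction to the Higgs locus $\Mm_C(n,d)$ agrees, up to scaling, with the natural symplectic form $\Omega_0$ on the stable locus.

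First I would record the behaviour of the four involutions with respect to $\Omega_0$. By Lemma~\ref{lm eta_KK}, $\zeta_\KK^+$ is Poisson and $\zeta_\KK^-$ is anti-Poisson on $\KK$; Proposition~\ref{pr natural involutions preserve behaviour under Poisson structure} then gives that the natural involutions $\widehat{\zeta}_K^{+}$ and $\widehat{\zeta}_K^{-}$ on $\M_{\KK,H_0}(\v_{d+\delta})$ are, respectively, Poisson and anti-Poisson on the smooth locus where they are biregular, and Lemma~\ref{lm involutions and spectral correspondence} identifies them, under the spectral correspondence, with the biregular involutions $\check{\zeta}_K^{\pm}$ of $\Mm_C(n,d)$; restricting the Bottaccin--Markman structure to the Higgs locus shows $\check{\zeta}_K^{+}$ is symplectic and $\check{\zeta}_K^{-}$ anti-symplectic. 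For the $\lambda$-involutions I would invoke Corollary~\ref{co lambda is Poisson} (which itself rests on Propositions~\ref{pr natural involutions preserve behaviour under Poisson structure} and~\ref{pr eta antisymplectic}): applied with $\zeta_S=\zeta_\KK^{\pm}$ and the $\zeta_\KK^{\pm}$-invariant $J$ satisfying \eqref{eq condition for J and v}, it yields that $\lambda_{J,\KK}^{+}=\xi_{J,\KK}\circ\widehat{\zeta}_K^{+}$ is anti-Poisson and $\lambda_{J,\KK}^{-}=\xi_{J,\KK}\circ\widehat{\zeta}_K^{-}$ is Poisson; by Lemma~\ref{lm involutions and spectral correspondence 2} these are $\check{\lambda}_{J_C,K}^{\pm}$, so $\check{\lambda}_{J_C,K}^{+}$ is anti-symplectic and $\check{\lambda}_{J_C,K}^{-}$ is symplectic.

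Next I would pass from each involution to its fixed locus by the elementary linear-algebra dichotomy at a smooth fixed point $p$. Writing $T_p\Mm_C(n,d)=V_+\oplus V_-$ for the $\pm 1$-eigenspaces of the differential of the involution $\phi$, the identity $\Omega_0(d\phi\,u,d\phi\,v)=\pm\Omega_0(u,v)$ forces, in the symplectic case, $V_+$ and $V_-$ to be mutually $\Omega_0$-orthogonal and hence each $\Omega_0$-non-degenerate, so $\Fix(\phi)$ is smooth and holomorphic symplectic; in the anti-symplectic case it forces $\Omega_0|_{V_+}=0=\Omega_0|_{V_-}$, so $V_\pm$ are Lagrangian and $\Fix(\phi)$ is a smooth half-dimensional isotropic subvariety, i.e.\ a Lagrangian subvariety. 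Applying this to $(\check{\zeta}_K^{+},\check{\lambda}_{J_C,K}^{-})$ in the symplectic case and to $(\check{\zeta}_K^{-},\check{\lambda}_{J_C,K}^{+})$ in the anti-symplectic case gives the stated properties of $\Nn_C^{\pm}(n,d)$ and $\Pp_C^{\pm}(n,d,J_C)$ over the open dense stable locus, and one then takes closures exactly as in the proofs of Propositions~\ref{pr N is Lagrangian} and~\ref{pr dimension of P}.

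The step I expect to be the main obstacle is exactly this last passage from the stable locus to the whole moduli space: $\Omega_0$ is a genuine symplectic form only on the smooth stable part, so the symplectic/Lagrangian conclusions hold a priori only there, and one must argue that the fixed locus is the closure of its stable part and that this closure is a subvariety of the expected dimension, the Lagrangian half-dimensionality being inherited as in Proposition~\ref{pr N is Lagrangian}. One must also check that the two identifications invoked --- the spectral correspondence \eqref{eq compactification of Higgs space} and the comparison of symplectic forms of \cite{biswas&bottaccin&gomez} --- are compatible with these restrictions. Everything else is bookkeeping.
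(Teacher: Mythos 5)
Your proposal is correct and follows exactly the route the paper itself indicates: the paper gives no written proof, merely citing \cite{heller&schaposnik, garciaprada&wilkins, basu&garciaprada} and remarking that the corollary ``could also be derived from Propositions \ref{pr natural involutions preserve behaviour under Poisson structure} and \ref{pr eta antisymplectic}'', which is precisely the derivation you carry out (via Lemma \ref{lm eta_KK}, Corollary \ref{co lambda is Poisson}, the spectral-correspondence identifications, and the standard eigenspace argument at fixed points). Your closing caveat about passing from the stable locus to the whole moduli space is a fair observation, and is handled in the paper only implicitly, in the same closure-of-the-smooth-part spirit as Proposition \ref{pr N is Lagrangian}.
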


\section{Non-linear degeneration of integrable systems}

\label{sc degenerations}

\subsection{Extending the non-linear degeneration of Donagi, Ein and Lazarsfeld}

\label{sc degeneration of DEL}

In this section we generalize the Donagi, Ein and Lazarsfeld \cite{donagi&ein&lazarsfeld} non-linear degeneration of the moduli space of pure dimension $1$ sheaves on a K3 surface into the moduli space of Higgs bundles over a curve inside the K3. Such degeneration was recently generalized to the case of curves in abelian surfaces by de Cataldo, Maulik and Shen \cite{deCataldo&maulik&shen_1}, being a central step on their proof of the P=W conjecture over genus $2$ curves. 
In this section we provide a degeneration of the moduli space of pure dimension $1$ sheaves over any smooth surface, into the moduli space of $L$-Higgs bundles over a smooth subcurve, being $L$ the normal bundle of our curve inside the surface.

Take a smooth surface $S$ and consider a smooth curve $C \subset S$ inside it of genus $g_C$. From now on, let us denote its normal bundle by 
\begin{equation} \label{eq setting L}
L := \Oo_S(C)|_C,
\end{equation}
that coincides with $K_C \otimes (K_S|_C)^{-1}$ by adjunction. The projective completion of the normal cone is isomorphic to the ruled surface $\LL$ defined in \eqref{eq definition of LL}. This implies that the canonical divisor of $\LL$ is
\[
K_\LL = -2 \sigma_\infty + p^*K_C - p^*\Oo_S(C)|_C,
\]
By the genus formula, and recalling that $\sigma_0$ does not intersect with $\sigma_\infty$, we get that
\begin{equation} \label{eq equality of self intersections}
\sigma_0^2 = \ell = C^2
\end{equation}
and
\[
\sigma_0 \cdot K_\LL = C \cdot K_S.
\]
Hence, the linear systems $|nC|$ and $|n\sigma_0|$ contain generic curves of equal genus,
\begin{equation} \label{eq equal genus}
g_{nC} = g_{n\sigma_0},
\end{equation}
and the topological invariants of the ideal sheaves describing these curves coincide.

Take the trivial family $S \times \PP^1$ and consider the blow-up at $C \times \{ 0 \}$,
\begin{equation} \label{eq definition of Ss}
\ol{\sS} := \Blow_{C \times \{ 0 \} }\left (S \times \PP^1 \right ),
\end{equation}
which is non-singular as we are blowing-up a non-singular subvariety. Composing the structural morphism of the blow-up with the projection $S \times \PP^1 \to \PP^1$, one gets the structural morphism 
\[
\pi : \ol{\sS} \to \PP^1. 
\]
This map is flat and its fibres outside $0 \in \PP^1$ are not affected by our construction, so for the generic fibre, at $t \neq 0$, we have
\[
\ol{\sS}_t \cong S.
\]
Meanwhile, the central fibre, at $t = 0$, is the union
\begin{equation} \label{eq definition of Ss_0}
\ol{\sS}_0 = \LL \cup S,
\end{equation}
where both irreducible components meet transversely at the image of infinity section $\sigma_\infty \subset \LL$ identified with the curve $C \subset S$. Note that $\ol{\sS}_0$ is a complete intersection subvariety. 
One also consider the open subset
\begin{equation} \label{eq definition of Ss'}
\sS := \ol{\sS} - ( S \times  \{ 0 \}),
\end{equation}
and observe that the restriction there of the structural morphism gives, again, a flat family of surfaces over $\PP^1$, but now the central fibre $\sS|_{t = 0} = \Tot(L)$ is not projective.

Denote by $p_S$ the composition of the structural morphism of the blow-up with the projection of $S \times \PP^1$ onto $S$. Being $\ol{\sS}$ projective, one may choose a relative polarization
\begin{equation} \label{eq definition of Hh}
\hH = p_S^* \H \otimes \Oo_{\ol{\sS}}(\LL). 
\end{equation}
For all $t \neq 0$ in $\PP^1$, one trivially has that $\hH|_{\ol{\sS}_t} = \H$ while $H_0 := \hH|_{\LL}$ corresponds to $p^*(\H|_C) \otimes \Oo_{\LL}(\sigma_{\infty})$.

Since $C \times \{ 0 \}$ is a Cartier divisor of $C \times \PP^1$, one has that 
\begin{equation} \label{eq definition of Cc}
\cC := \Blow_{C \times \{ 0 \}}\left ( C \times \PP^1 \right ) \cong C \times \PP^1
\end{equation}
embeds naturally into $\ol{\sS}$. One can consider the fibration $\cC \to \PP^1$, whose generic fibres, at $t\neq 0$, are naturally identified with $C$,
\[
\cC_t \cong C \subset S = \ol{\sS}_t.
\]
Furthermore, the restriction of $\cC$ to $t = 0$ coincides with the image of the zero section,
\[
\cC_0 \cong \sigma_0 \cong C, 
\]
inside $\LL \subset \ol{\sS}_0$. Note that the restriction to every point of $\PP^1$ gives $\cC_t \in H^2(\ol{\sS}_t, \ZZ)$. Using $\cC$, consider the Mukai vector
\begin{equation} \label{eq definition of v}
\vV_a = \left ( 0, [n\cC], a - n^2\ell   \right ).
\end{equation}
Let us consider the relative Hilbert scheme parametrizing ideal sheaves of curves in $\ol{\sS}$ with first Chern class $[n\cC]$ and let us denote by $\{ n\cC \}$ the connected component containing the family of curves $n\cC$. Let us consider, inside it, the open subset given by those curves $A \subset \ol{\sS}_t$ contained in the $\sS \subset \ol{\sS}$,
\begin{equation} \label{eq definition of Bb}
\{ n\cC \}_\sS := \{ n\cC \} |_{A \subset \sS}.
\end{equation}

Take a point of $\{ n\cC \}_\sS$ representing the ideal sheaf $\Ii_A \hookrightarrow \Oo_{\sS_t}$ of a curve $A \subset \sS_t$. We see that $\Ii_A$ is of rank $1$ and pure dimension $2$ over a smooth surface $\sS_t$ which either is projective (for $t \neq 0$) or an affine subset of a smooth projective surface $\LL$ (for $t = 0$) where $\Ii_A$ extends naturally. It follows that the determinant in cohomology is well define and one can consider 
\begin{equation} \label{eq det_Ss}
\det_{\sS} : \{ n\cC \}_\sS \to \Jac_{\PP^1}(\ol{\sS}).
\end{equation}
 
For $t \neq 0$, the fibre of \eqref{eq det_Ss} over $L \in \Jac(S)$ corresponds to the linear system $|L|$ in $S$. We recall once again that every irreducible projective curve contained in $\Tot(L)$ can be deformed linearly to a multiple curve supported on $\sigma_0$. Then, the image of $\{ n\cC \}_\sS|_{t = 0}$ under \eqref{eq det_Ss} is $\Oo_{\ol{\sS}_0}(n \sigma_0)$ and  $\{ n\cC \}_\sS|_{t = 0}$ is the locus of $|n \sigma_0|$ inside $\LL$ which does not intersect $\sigma_\infty$. After \eqref{eq identification of Hitchin bases}, this is
\begin{equation} \label{eq description of Bb_0}
\{ n\cC \}_\sS|_{t=0} \cong \bigoplus_{i = 1}^n H^0(C, L^{\otimes i}),
\end{equation}
coinciding with the Hitchin base in \eqref{eq Hitchin system}.

Following Simpson \cite[Theorem 1.21]{simpson1} (see \cite[Theorem 4.3.7]{huybrechts&lehn} for a specific treatment of the relative case), let us consider the moduli space $\M_{\ol{\sS}/\PP^1, \hH}(\vV_a)$
of relatively pure dimension $1$ $\hH$-semistable relative sheaves over the $\PP^1$ scheme $\ol{\sS}$ having Mukai vector $\vV_a$. By construction, $\M_{\ol{\sS}/\PP^1, \hH}(\vV_a)$ comes equipped with the structural flat morphism to $\PP^1$, which trivializes over $\PP^1 - \{ 0 \}$ having generic fibres equal to $\M_{S,\H}(\v_a)$, and whose central fibre over $t = 0$ is $\M^{\hH_0}_{\LL \cup S}(\v_a)$. By specifying the relative (Fitting) support, one has the following surjective morphism 
\begin{equation} \label{eq relative Beauville fibration}
\morph{\M_{\ol{\sS}/\PP^1, \hH}(\vV_a)}{\Hilb_{\ol{\sS}/\PP^1}([n\cC])}{\Ff}{\supp(\Ff),}{}{\mathbf{h}}
\end{equation}
commuting with both structural morphisms. Consider the restriction $\M_{\ol{\sS}/\PP^1, \hH}(\vV_a) |_{\{ n\cC \}_\sS}$ having generic fibres $\M_{S,\H}(\v_a,nC)$ over $t \neq 0$ and central fibre $\M^{H_0}_{\LL}(\v_a)|_{\supp \cap \sigma_\infty = \emptyset}$ at $t = 0$. For the description of the central fibre, note that a sheaf in $\ol{\sS}_0 = \LL \cup S$ whose support fits in $\Tot(L) \subset \LL$ is $H_0$-semistable (resp. $H_0$-stable) as a sheaf in $\LL$ if and only if it is $\hH_0$-semistable (resp. $\hH_0$-stable) as a sheaf in the whole $\ol{\sS}_0$. Recalling \eqref{eq compactification of Higgs space}, we define the open subset
\begin{equation} \label{eq definition of Mm}
\mM_{\sS/\PP^1, \hH}(\vV_a,n\cC) \subset \M_{\ol{\sS}/\PP^1, \hH}(\vV_a) |_{\{ n\cC \}_\sS},
\end{equation}
given by the complement of those sheaves at $t = 0$ having rank different from $1$ on some irreducible component of their support.

We summarize all of the above in the following theorem. This is a generalization to arbitrary smooth surfaces of the non--linear deformation that appeared first in \cite{donagi&ein&lazarsfeld} for K3 surfaces and in \cite{deCataldo&maulik&shen_1} for abelian surfaces.

\begin{theorem}[\cite{donagi&ein&lazarsfeld}] \label{tm Mm_S}
Consider a smooth surface $S$ and a genus $g \geq 2$ smooth projective curve $C$ inside $S$. Denote the normal bundle of $C$ in $S$ by $L$. Choose a positive integer $n$ and a Mukai vector $\v_a$ as in \eqref{eq definition of v_a} constructed out of $n$ and $C7$. One can construct $\mM_{\sS/\PP^1, \hH}(\vV_a,n\cC)$ and $\{ n\cC \}_\sS$ flat over $\PP^1$, and a morphism between them commuting with the structural morphisms, 
\begin{equation} \label{eq commuting diagram Mm_S}
\xymatrix{
\mM_{\sS/\PP^1, \hH}(\vV_a,n\cC) \ar[rr] \ar[rd] & & \{ n\cC \}_\sS \ar[ld]
\\
& \PP^1. &
}
\end{equation}
The horizontal arrow is a surjective fibration, trivial over $\PP^1 - \{ 0 \}$, and the generic fibre over $t \neq 0$ is the Le Poitier morphism \eqref{eq LePoitier}, 
\[
\M_{S,\H}(\v_a,nC) \to \{ nC \},
\]
while the central fibre at $t = 0$ gives the Hitchin morphism \eqref{eq Hitchin system},
\[
\Mm^L_C(n,d) \to \bigoplus_{i = 1}^n H^0(C, L^{\otimes i}),
\]
where $d = a - \delta$.
\end{theorem}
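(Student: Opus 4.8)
The plan is to assemble the theorem from the pieces already constructed in the section, treating it as a synthesis statement rather than a result requiring new machinery. First I would verify flatness: $\ol{\sS} \to \PP^1$ is flat because it is the blow-up of a smooth subvariety in a smooth total space, and restricting to the open set $\sS$ preserves flatness over $\PP^1$; flatness of $\{n\cC\}_\sS$ over $\PP^1$ follows since it is an open subset of a relative Hilbert scheme, which is flat by construction, and flatness of $\mM_{\sS/\PP^1,\hH}(\vV_a,n\cC)$ over $\PP^1$ follows from Simpson's relative construction \cite{simpson1} (cf. \cite{huybrechts&lehn}) together with the fact that we are taking an open subset. The existence of the commuting diagram \eqref{eq commuting diagram Mm_S} is just the restriction of the relative Le Potier (Fitting support) morphism \eqref{eq relative Beauville fibration} to the chosen open loci, which by construction commutes with the two maps to $\PP^1$.

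Next I would identify the generic fibre. For $t \neq 0$ the blow-up does not affect $\ol{\sS}_t$, so $\ol{\sS}_t \cong S$, $\hH|_{\ol{\sS}_t} = \H$, $\cC_t \cong C$, and $\vV_a$ restricts to $\v_a$; hence the horizontal arrow over $t \neq 0$ is exactly the Le Potier morphism $\M_{S,\H}(\v_a, nC) \to \{nC\}$ of \eqref{eq LePoitier}, and triviality over $\PP^1 - \{0\}$ is immediate since the whole family is a product there. Then I would identify the central fibre. The key geometric input is \eqref{eq definition of Ss_0}: $\ol{\sS}_0 = \LL \cup S$ with the two components meeting transversely along $\sigma_\infty = C$, where $\LL = \PP(\Oo_C \oplus L)$ is the projective completion of the normal cone. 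Restricting to $\{n\cC\}_\sS$ throws away curves meeting $S \times \{0\}$, so at $t = 0$ we only see curves in $\Tot(L) \subset \LL$ not meeting $\sigma_\infty$; by the identification \eqref{eq identification of Hitchin bases}, recalled in \eqref{eq description of Bb_0}, this base is $\bigoplus_{i=1}^n H^0(C, L^{\otimes i})$, the Hitchin base \eqref{eq Hitchin system}. For the total space: a sheaf on $\ol{\sS}_0$ supported in $\Tot(L)$ is $\hH_0$-(semi)stable iff it is $H_0$-(semi)stable as a sheaf on $\LL$ (as noted just before \eqref{eq definition of Mm}), and excising sheaves of rank $\neq 1$ on a component of their support — the definition of $\mM$ in \eqref{eq definition of Mm} — together with the spectral correspondence \eqref{eq compactification of Higgs space} identifies the central fibre with $\Mm^L_C(n,d)$. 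The degree shift $d = a - \delta$ is read off from \eqref{eq value of a} and \eqref{eq definition of delta} after observing via \eqref{eq equality of self intersections} that the topological invariant $\ell$ of $\LL$ equals $C^2$, so the Mukai vector $\v_a = (0, [nC], a - n^2\ell)$ matches $\v_{d+\delta}$; finally the central fibre of the horizontal map becomes the Hitchin morphism \eqref{eq Hitchin system}, completing the identification.

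The main obstacle I expect is the comparison of stability across the central fibre: making precise that $\hH_0$-(semi)stability on the reducible, non-normal surface $\ol{\sS}_0 = \LL \cup S$ restricted to sheaves scheme-theoretically supported in $\Tot(L)$ coincides with $H_0$-(semi)stability on the smooth surface $\LL$. This requires checking that subsheaves of such a sheaf are again supported in $\Tot(L)$ (so that the component $S$ plays no role in testing stability) and that the Hilbert polynomials computed with respect to $\hH_0$ and $H_0$ differ only by the harmless factor coming from $\hH = p_S^*\H \otimes \Oo_{\ol{\sS}}(\LL)$ restricting to $p^*(\H|_C) \otimes \Oo_\LL(\sigma_\infty)$ — a point the excerpt asserts just before \eqref{eq definition of Mm} but which is the genuine content of the identification. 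Everything else (flatness, the diagram, the $t\neq 0$ fibre, the combinatorial identification of bases) is bookkeeping that follows formally from the constructions already in place; I would present it as such and cite \cite{donagi&ein&lazarsfeld}, \cite{deCataldo&maulik&shen_1} for the K3 and abelian prototypes.
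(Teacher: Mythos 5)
Your proposal is correct and follows essentially the same route as the paper, which presents Theorem \ref{tm Mm_S} explicitly as a summary of the construction carried out in Section \ref{sc degeneration of DEL} (flatness of the blow-up family and of Simpson's relative moduli space, restriction of the relative support morphism \eqref{eq relative Beauville fibration}, triviality and identification with \eqref{eq LePoitier} over $t\neq 0$, and the identification of the central fibre via \eqref{eq description of Bb_0}, \eqref{eq compactification of Higgs space} and the degree shift $d=a-\delta$). You also correctly single out the $\hH_0$-versus-$H_0$ stability comparison on $\ol{\sS}_0=\LL\cup S$ as the one substantive point, which the paper likewise only asserts in the paragraph preceding \eqref{eq definition of Mm}.
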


\begin{remark} \label{rm chi-independence}
Since the normal bundle $L$ of a curve $C$ in a del Pezzo surface has $\deg(L) > \deg(K_C)$, we observe that the degeneration given in Theorem \ref{tm Mm_S} connects the two moduli spaces considered in the work of Maulik and Shen \cite{maulik&shen}, and for which cohomological $\chi$-independence is proven.
\end{remark}

\subsection{Deforming the symplectic structure}

\label{sc symplectic structure}

In this section we restrict ourselves to the case where $S$ is a symplectic surface. In this context we study the symplectic structure on the moduli spaces involved in the construction obtained in Section \ref{sc degeneration of DEL}, showing that it provides a deformation of symplectic varieties.

Since the canonical bundle of $S$ is trivial, one gets, by adjunction, that the normal bundle of $C \subset S$ is its canonical bundle $K$. Recall then that the family of surfaces $\ol{\sS} \to \PP^1$ has central fibre $\ol{\sS}_0 = \KK \cup S$ meeting at the identification of $\sigma_\infty \subset \KK$ with $C \subset S$. We start by studying $K_{\ol{\sS}/\PP^1}$ and $K_{\ol{\sS}_0}$. 

\begin{proposition} \label{pr relative canonical bundle}
Let $S$ be a symplectic surface and pick $C \subset S$ smooth curve of genus $g \geq 2$. One has the following
\begin{enumerate}

\item \label{it description of omega_Ss_0} $K_{\ol{\sS}_0}$ is the line bundle given by $\Oo_\KK(-\sigma_\infty)$ and $\Oo_S(C)$ identified along $\sigma_\infty \cong C$ by a natural isomorphism between $K \cong \Oo_\KK(-\sigma_\infty)|_{\sigma_\infty}$ and $K \cong \Oo_S(C)|_C$;

\item \label{it sections of omega} $H^0(\ol{\sS}_0, K_{\ol{\sS}_0}) = \cC$ and every non-trivial section of $K_{\ol{\sS}_0}$ vanishes on $\KK$ and only on $\KK$;

\item \label{it sections of omega^-1} $H^0(\ol{\sS}_0, \omega^{-1}_{\ol{\sS}_0}) = \cC$ and every non-trivial section of $\omega^{-1}_{\ol{\sS}_0}$ vanishes on $S$ and only on $S$; 

\item \label{it pi_* omega} $\pi_*K_{\ol{\sS}/\PP^1} \cong \Oo_{\PP^1}(1)$;

\item \label{it pi_* omega^-1} $\pi_*\omega^{-1}_{\ol{\sS}/\PP^1} \cong \Oo_{\PP^1}(-1)$.

\end{enumerate}
\end{proposition}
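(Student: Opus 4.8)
The plan is to first pin down the relative canonical bundle of $\pi\colon\ol\sS\to\PP^1$ and then extract all five statements by restricting to the components of $\ol\sS_0$ and by pushing forward along $\pi$. Recall that $S$ symplectic makes $K_S$ trivial, so by adjunction the normal bundle $L=\Oo_S(C)|_C$ equals $K_C$ and $\KK=\PP(\Oo_C\oplus K_C)$. Since $\ol\sS$ is the blow-up of the smooth threefold $S\times\PP^1$ along the smooth codimension-$2$ locus $C\times\{0\}$, with exceptional divisor $E\cong\KK$, I would start from $\omega_{\ol\sS}\cong b^*\omega_{S\times\PP^1}\otimes\Oo_{\ol\sS}(E)$ ($b$ the blow-down); as $\omega_S\cong\Oo_S$ gives $\omega_{S\times\PP^1}\cong\mathrm{pr}^*\omega_{\PP^1}$, which pulls back to $\pi^*\omega_{\PP^1}$, this yields $\omega_{\ol\sS/\PP^1}\cong\Oo_{\ol\sS}(E)$, and since $\ol\sS_0=\pi^{-1}(0)$ is a principal divisor, adjunction gives $K_{\ol\sS_0}\cong\Oo_{\ol\sS}(E)|_{\ol\sS_0}$. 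For (1), write $\ol\sS_0=\widetilde S+E$ with both multiplicities $1$, where $\widetilde S\cong S$ (blowing up the Cartier divisor $C\subset S$ changes nothing); then $\Oo_{\ol\sS}(E)|_{\widetilde S}=\Oo_S(C)$ directly, while $\Oo_{\ol\sS}(\widetilde S)|_E=\Oo_\KK(\sigma_\infty)$ together with triviality of $\Oo_{\ol\sS}(\widetilde S+E)|_{\ol\sS_0}$ forces $\Oo_{\ol\sS}(E)|_E=\Oo_\KK(-\sigma_\infty)$, so $K_{\ol\sS_0}|_S\cong\Oo_S(C)$ and $K_{\ol\sS_0}|_\KK\cong\Oo_\KK(-\sigma_\infty)$ (alternatively one cites $\omega_V|_{V_i}\cong\omega_{V_i}(V_i\cap V_j)$ for a normal-crossings union and $K_\KK=\Oo_\KK(-2\sigma_\infty)$). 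Both restrict along $\sigma_\infty\cong C$ to $L=K_C$ (using $N_{C/S}=K_C$ and $N_{\sigma_\infty/\KK}=L^{-1}$), and the gluing is the canonical identification carried by the dualizing sheaf.

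For (2) and (3) I would use that a global section of $K_{\ol\sS_0}$, resp.\ of $\omega^{-1}_{\ol\sS_0}$, is a pair of sections of the two component bundles agreeing along $\sigma_\infty\cong C$. Here $H^0(\KK,\Oo_\KK(-\sigma_\infty))=0$ (the class $-\sigma_\infty$ meets a fibre negatively, hence is not effective) and $H^0(S,\Oo_S(-C))=0$ (as $C$ is a nonzero effective divisor), whereas $H^0(\KK,\Oo_\KK(\sigma_\infty))\cong\CC$ (spanned by the section with divisor $\sigma_\infty$) and the sections of $\Oo_S(C)$ vanishing along $C$ form the line spanned by its canonical section. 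For (2), the $\KK$-component of any section must vanish, so the $S$-component vanishes along $C$, hence is a scalar multiple of the canonical section; thus $H^0(\ol\sS_0,K_{\ol\sS_0})$ is one-dimensional and a nonzero section vanishes on all of $\KK$ and, on $\widetilde S$, only along $C\subset\KK$, i.e.\ exactly on $\KK$. Part (3) is the mirror statement: the $S$-component must vanish, the $\KK$-component ranges over $H^0(\KK,\Oo_\KK(\sigma_\infty))$, and a nonzero section vanishes exactly on $S$.

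For (4) and (5) I would apply $\pi_*$ to the restriction-to-$E$ sequences
\[
0\to\Oo_{\ol\sS}\to\Oo_{\ol\sS}(E)\to\Oo_\KK(-\sigma_\infty)\to 0,\qquad 0\to\Oo_{\ol\sS}(-E)\to\Oo_{\ol\sS}\to\Oo_\KK\to 0 .
\]
Here $\pi_*\Oo_{\ol\sS}=\Oo_{\PP^1}$ ($\ol\sS$ normal with connected fibres), $\pi_*\Oo_\KK$ is the structure sheaf of $0\in\PP^1$, and $H^\bullet(\KK,\Oo_\KK(-\sigma_\infty))=0$: Riemann--Roch on $\KK$ gives $\chi(\Oo_\KK(-\sigma_\infty))=(1-g)+\frac{1}{2}(\sigma_\infty^2-\sigma_\infty\cdot K_\KK)=(1-g)+\frac{\ell}{2}=0$ with $\ell=2g-2$, while $h^0=0$ by non-effectivity and $h^2=h^0(\Oo_\KK(-\sigma_\infty))=0$ by Serre duality, hence $h^1=0$ too. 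Feeding this into the long exact sequences computes $\pi_*K_{\ol\sS/\PP^1}$ and $\pi_*\omega^{-1}_{\ol\sS/\PP^1}$, giving (4) and (5); for (4) one can alternatively argue via relative duality $\pi_*K_{\ol\sS/\PP^1}\cong(R^2\pi_*\Oo_{\ol\sS})^\vee$ together with $Rb_*\Oo_{\ol\sS}=\Oo_{S\times\PP^1}$ and the K\"unneth formula. Throughout one uses that $\pi_*K_{\ol\sS/\PP^1}$ is locally free of rank $1$: by (2) and the triviality of $\omega_S$ the fibres have $h^0\equiv1$, so cohomology and base change applies.

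The step I expect to be the main obstacle is (1): getting the two normal bundles restricted to $\sigma_\infty\cong C$ and the canonical gluing precisely right, since the compatibility of sections in (2)--(3) and the restriction sequences in (4)--(5) all rest on it. The other essential input is the cohomology vanishing of $\Oo_\KK(-\sigma_\infty)$, which is what makes the pushforwards in (4)--(5) collapse to the stated line bundles.
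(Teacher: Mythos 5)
Your treatment of (1)--(3) follows the paper's own route: compute $\omega_{\ol{\sS}/\PP^1}\cong\Oo_{\ol{\sS}}(\KK)$ from the blow-up formula, restrict to the two components of $\ol{\sS}_0$ using the triviality of $\Oo_{\ol{\sS}}(\KK+S)|_{\ol{\sS}_0}$, and read off global sections of the glued line bundle from $h^0(\KK,\Oo_\KK(-\sigma_\infty))=0$, $h^0(S,\Oo_S(-C))=0$ and $h^0(\KK,\Oo_\KK(\sigma_\infty))=1$. That part is correct and essentially identical to the paper's argument. For (4)--(5) you take a genuinely different, and in fact more reliable, route: the paper asserts that the two pushforwards are line bundles ``inverse to each other'' (which is not justified and is not true here) and then counts sections, whereas pushing forward the two restriction sequences is the honest computation; for (5) it cleanly yields $\pi_*\omega^{-1}_{\ol{\sS}/\PP^1}\cong\ker\bigl(\Oo_{\PP^1}\to\Oo_{\{0\}}\bigr)\cong\Oo_{\PP^1}(-1)$.

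The gap is in (4): you assert that feeding $\pi_*\Oo_\KK(-\sigma_\infty)=0$ into the first sequence ``gives (4)'', but what it gives is $0\to\Oo_{\PP^1}\to\pi_*\Oo_{\ol{\sS}}(\KK)\to 0$, i.e.\ $\pi_*K_{\ol{\sS}/\PP^1}\cong\Oo_{\PP^1}$, not $\Oo_{\PP^1}(1)$. This is not an error in your sequences; it is the correct answer, since $h^0(\PP^1,\pi_*K_{\ol{\sS}/\PP^1})=h^0(\ol{\sS},\Oo_{\ol{\sS}}(\KK))=1$ (the exceptional divisor is rigid) and a rank-one locally free sheaf on $\PP^1$ with a one-dimensional space of sections is $\Oo_{\PP^1}$, whereas $h^0(\Oo_{\PP^1}(1))=2$. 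So your own computation contradicts item (4) as stated, and you should have flagged this rather than asserting the sequence delivers the claim; the paper's proof of (4) suffers from the same internal tension, since it too deduces the answer from the existence of ``a single non-zero section up to scaling''. (Only item (5) is used afterwards, in Theorem \ref{tm relative symplectic form}, where one merely trivializes over $\AA^1$, so nothing downstream depends on the degree in (4).) Two minor points: your Riemann--Roch line should read $\chi(\Oo_\KK(-\sigma_\infty))=(1-g)+\tfrac12\left(\sigma_\infty^2+\sigma_\infty\cdot K_\KK\right)=(1-g)+\tfrac{\ell}{2}=0$ (the difference you wrote equals $-\tfrac{3\ell}{2}$), and only $h^0=0$ is actually needed at the level of $\pi_*$, the full vanishing of $H^\bullet$ being relevant only for $R^1\pi_*$.
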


\begin{proof}
We first observe that $\ol{\sS}$ is constructed by blowing-up a smooth subvariety of $S \times \PP^1$, which is also smooth. Hence $\ol{\sS}$ is smooth and has canonical bundle $K_{\ol{\sS}} = r^*K_{S \times \PP^1}(\KK)$, where we denote by $r$ the structural morphism of the blow-up and we recall that $\KK$ is the exceptional divisor. One can deduce that the singular central fibre $\ol{\sS}_0 = \KK \cup S$, where both components meet transversally, is a complete intersection variety, so it is Gorenstein and its dualizing sheaf $K_{\ol{\sS}_0}$ is a line bundle. This implies that the relative dualizing sheaf $K_{\ol{\sS}/\PP^1}$ is a line bundle too, indeed it is $\Oo_{\ol{\sS}}(\KK)$, as $K_S$ is trivial. 

Since $\ol{\sS}_0$ is a complete intersection divisor in $\ol{\sS}$, one has that $K_{\ol{\sS}_0} = K_{\ol{\sS}/\PP^1}(\KK + S) |_{\ol{\sS}_0}$, where we abuse of notation by denoting the total transform of $S \times \{ 0 \}$ simply by $S$. Then, 
\[
K_{\ol{\sS}_0}|_\KK \cong \Oo_{\ol{\sS}}(2\KK)|_\KK \otimes \Oo_{\ol{\sS}}(S)|_\KK \cong \Oo_\KK(-2\sigma_\infty) \otimes \Oo_\KK(\sigma_\infty) \cong \Oo_\KK(-\sigma_\infty),
\] 
where we recall that $\Oo_{\ol{\sS}}(-\KK)|_\KK$ is the linearization $\Oo_\KK(1) \cong \Oo_\KK(\sigma_\infty)$. Similarly, the restriction of $K_{\ol{\sS}_0}$ to $S$ gives
\[
K_{\ol{\sS}_0}|_S \cong \Oo_{\ol{\sS}}(2\KK)|_S \otimes \Oo_{\ol{\sS}}(S)|_S \cong \Oo_{S}(2C) \otimes \Oo_{S}(-C) \cong \Oo_S(C),
\]
where we recall that $K_S \cong \Oo_{\ol{\sS}}(\KK + S)|_S \cong \Oo_{\ol{\sS}}(\KK)|_S \otimes \Oo_{\ol{\sS}}(S)|_S$ is trivial, hence $\Oo_{\ol{\sS}}(S)|_S \cong (\Oo_{\ol{\sS}}(\KK)|_S)^{-1}$. Recalling that $K_\KK = \Oo_\KK(-2\sigma_\infty)$ and $K_S \cong \Oo_S$, applying adjunction, one gets that $\Oo_\KK(-\sigma_\infty)|_{\sigma_\infty}$ and $\Oo_S(C)|_C$ both being isomorphic to $K$, the canonical bundle of $\sigma_\infty \cong C$, so the restriction of the line bundle $K_{\ol{\sS}_0}$ provides a natural identification. This finishes the proof of \eqref{it description of omega_Ss_0}. 

Since $\Oo_\KK(-\sigma_\infty)$ has no non-zero sections, every non-zero section of $K_{\ol{\sS}_0}$ vanishes completely on $\KK$, hence it is given by a section of $\Oo_S(C)$ vanishing identically at $C$. The set of those sections determines a $1$-dimensional subspace of $H^0(S,\Oo_S(C))$ and \eqref{it sections of omega} follows. 

We proof \eqref{it sections of omega^-1} by observing that $h^0(\KK, \Oo_\KK(\sigma_\infty)) = 1$ and that $h^0(S, \Oo_S(-C)) = 0$. It follows that every non-zero section of $K_{\ol{\sS}_0}^{-1}$ vanishes on $S$, so it is given by a section of $\Oo_\KK(\sigma_\infty)$ which vanishes identically at $\sigma_\infty$.

Recall that $\ol{\sS}|_{\PP^1 - \{ 0 \}}$ is the trivial fibration $S \times (\PP^1 - \{ 0 \})$, so  the restriction there of $K_{\ol{\sS}/\PP^1}$ is the trivial line bundle. After \eqref{it sections of omega} and \eqref{it sections of omega^-1} and upper semicontinuity of cohomology, one has that $\pi_*K_{\ol{\sS}/\PP^1}$ and $\pi_*\omega^{-1}_{\ol{\sS}/\PP^1}$ are line bundles over $\PP^1$, inverse to each other. Note that $K_{\ol{\sS}/\PP^1} \cong \Oo_{\ol{\sS}}(\KK)$ comes naturally equipped with a section. Furthermore, $h^0(\Oo_{\ol{\sS}}(\KK)) = 1$ by the properties of blow-up. It follows that $\pi_*K_{\ol{\sS}/\PP^1}$ has a single non-zero section (up to scaling), so \eqref{it pi_* omega} follows. 

Finally, \eqref{it pi_* omega^-1} follows from \eqref{it pi_* omega}.  
\end{proof}

In view of \eqref{it sections of omega} and \eqref{it sections of omega^-1} of Proposition \ref{pr relative canonical bundle} we shall first construct a Poisson structure on $\M_{\ol{\sS}/\AA^1, \hH}(\vV_a)$ and, then, derive the symplectic structure on $\mM_{\sS/\AA^1, \hH}(\vV_a,n\cC)$. Recall that $\ell = 2g_C - 2$ in this case, so we fix $d = a - (n^2-n)(g_C - 1)$.

\begin{theorem} \label{tm relative symplectic form}
Consider a smooth symplectic surface $S$ and a genus $g \geq 2$ smooth projective curve $C$ inside $S$. Then, there exists a relative Poisson structure $\Theta$ on $\M_{\ol{\sS}/\AA^1, \hH}(\vV_a) \to \AA^1$ which defines a relative symplectic form $\varOmega$ on $\mM_{\sS/\AA^1, \hH}(\vV_a,n\cC) \to \AA^1$ coinciding (up to scaling) with the Mukai form $\Omega$ over the generic fibre $\M_{S, \H}(\v_a)$ over $t \neq 0$, and, on the central fibre $\Mm_C(n,d)$ at $t = 0$, with $\Omega_0$ obtained by extending the canonical symplectic form on the cotangent of the moduli space of stable vector bundles.
\end{theorem}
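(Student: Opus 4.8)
The plan is to obtain $\Theta$ by performing the Bottacin--Markman construction of Theorem \ref{tm Bottaccin-Markman Poisson str} in families over $\AA^1$, starting from a suitable relative Poisson bivector on the total space $\ol{\sS}|_{\AA^1}$. By Proposition \ref{pr relative canonical bundle}\eqref{it pi_* omega^-1} one has $\pi_*\omega^{-1}_{\ol{\sS}/\PP^1}\cong\Oo_{\PP^1}(-1)$, which becomes trivial after restriction to $\AA^1$; I would fix a generator and regard it as a relative Poisson bivector $\vartheta\in H^0(\ol{\sS}|_{\AA^1},\omega^{-1}_{\ol{\sS}/\AA^1})$. Over $t\neq 0$ the fibre is $S$ and $\vartheta|_{\ol{\sS}_t}$ is a nowhere-vanishing section of $\omega^{-1}_S\cong\Oo_S$, hence a nonzero scalar multiple of the Poisson bivector $\Omega_S^{-1}$ dual to the symplectic form of $S$. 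Over $t=0$, Proposition \ref{pr relative canonical bundle}\eqref{it sections of omega^-1} shows that $\vartheta|_{\ol{\sS}_0}$ vanishes exactly along the component $S$ of $\ol{\sS}_0=\KK\cup S$, so it is nowhere zero on $\KK\setminus\sigma_\infty=\Tot(K_C)$.

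Next I would run the construction. Since $\ol{\sS}\to\PP^1$ is flat with reduced connected Gorenstein fibres, $\pi_*\Oo_{\ol{\sS}}\cong\Oo_{\PP^1}$, and relative duality gives $R^2\pi_*\omega_{\ol{\sS}/\AA^1}\cong\Oo_{\AA^1}$. Composing the relative Yoneda pairing on $R\mathcal{H}om$ of a (locally existing) universal sheaf with the relative trace, with multiplication by $\vartheta$ from $R^2\pi_*\omega^{\otimes 2}_{\ol{\sS}/\AA^1}$ to $R^2\pi_*\omega_{\ol{\sS}/\AA^1}$, and with the isomorphism above, produces --- exactly as in Theorem \ref{tm Bottaccin-Markman Poisson str} --- a relative Poisson structure $\Theta$ on the smooth locus of $\M_{\ol{\sS}/\AA^1,\hH}(\vV_a)$; it is relatively closed because closedness already holds over the dense locus $t\neq 0$, where it restricts fibrewise to the Mukai form. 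Restricting $\Theta$ to the open subset $\mM_{\sS/\AA^1,\hH}(\vV_a,n\cC)$ defined in Section \ref{sc degeneration of DEL} gives the relative $2$-form $\varOmega$. On the fibre over $t\neq 0$ the defining data are those of Theorem \ref{tm Bottaccin-Markman Poisson str} for $S$ with bivector a scalar multiple of $\Omega_S^{-1}$, so $\varOmega|_{t\neq 0}$ is a constant multiple of the Mukai form $\Omega$ on $\M_{S,\H}(\v_a)$, in particular non-degenerate on its smooth locus.

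For the central fibre I would first identify $\vartheta|_{\ol{\sS}_0}$. Its restriction to $\KK$ is a section of $\omega^{-1}_{\ol{\sS}_0}|_\KK\cong\Oo_\KK(\sigma_\infty)$, and using $K_\KK\cong\Oo_\KK(-2\sigma_\infty)$ this equals $K_\KK^{-1}\otimes\Ii_{\sigma_\infty}$, i.e. an anticanonical section of $\KK$ vanishing along $\sigma_\infty$; since $H^0(\KK,\Oo_\KK(\sigma_\infty))$ is one-dimensional, $\vartheta|_\KK$ is, up to a scalar, the unique such Poisson bivector, namely the canonical Poisson structure $\Theta_\KK$ of Section \ref{sc ruled surfaces} whose symplectic leaf $\Tot(K_C)$ carries the canonical symplectic form. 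Now every sheaf parametrized by $\Mm_C(n,d)\cong\mM_{\sS/\AA^1,\hH}(\vV_a,n\cC)|_{t=0}$ is supported inside $\Tot(K_C)$, hence away from $\sigma_\infty=\Sing(\ol{\sS}_0)$ and from the component $S$; on this open set $\omega_{\ol{\sS}_0}$ and $\omega_\KK$ are both canonically trivial, so the $\Ext$-groups, Yoneda products and traces computed on $\ol{\sS}_0$ coincide with those computed on $\KK$. Therefore $\varOmega|_{t=0}$ equals, up to a scalar, the restriction to the Higgs locus of the Bottacin--Markman structure of $(\KK,\Theta_\KK)$, which is the Poisson structure $\Theta_0$ of Section \ref{sc L-Higgs bundles}; as $L=K_C$ this is a symplectic form, and by Biswas--Bottaccin--Gomez it coincides up to scaling with $\Omega_0$, the form obtained by extending the canonical symplectic form on the cotangent bundle of the moduli of stable vector bundles. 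Non-degeneracy of $\varOmega$ is an open condition on the base which holds both over $t\neq 0$ and over $t=0$, and the smooth (stable) locus meets every fibre in a dense open subset; hence $\varOmega$ is a relative symplectic form, which finishes the proof.

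The step I expect to be the main obstacle is making the Bottacin--Markman construction work \emph{relatively across the singular central fibre} $\ol{\sS}_0$ --- ensuring that relative duality, the relative trace, and closedness behave correctly there --- together with the identification, at $t=0$, of the degenerating bivector $\vartheta$ with a scalar multiple of the canonical Poisson structure $\Theta_\KK$ on the ruled component; the remaining points (behaviour over $t\neq 0$ and non-degeneracy) are comparatively routine given Theorem \ref{tm Mm_S} and Proposition \ref{pr relative canonical bundle}.
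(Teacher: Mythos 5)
Your proposal is correct and follows essentially the same route as the paper: choose a section $\vartheta$ of $\pi_*K_{\ol{\sS}/\AA^1}^{-1}$ using Proposition \ref{pr relative canonical bundle}, run the Bottacin--Markman construction of Theorem \ref{tm Bottaccin-Markman Poisson str} relatively, and then check non-degeneracy fibrewise via the triviality of $K_S$ over $t\neq 0$ and the trivialization of $K_{\KK}$ over $\Tot(K_C)$ at $t=0$. Your extra step identifying $\vartheta|_{\KK}$ with the canonical Poisson bivector $\Theta_\KK$ (and hence $\varOmega|_{t=0}$ with $\Omega_0$ via Biswas--Bottaccin--Gomez) is a welcome elaboration of a point the paper's proof leaves implicit.
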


\begin{proof}
Starting from \eqref{it pi_* omega^-1} of Proposition \ref{pr relative canonical bundle}, we pick $\AA^1 \subset \PP^1$ a trivialization of $\pi_*K_{\ol{\sS}/\PP^1}^{-1}$ containing $0 \in \PP^1$ and choose a section $\vartheta \in H^0(\AA^1, \pi_*K_{\ol{\sS}/\AA^1}^{-1})$. Following Theorem \ref{tm Bottaccin-Markman Poisson str} one can construct a relative Bottacin--Markman Poisson structure $\Theta$ on $\M_{\ol{\sS}/\AA^1, \hH}(\vV_a)$, 
\begin{align} \label{eq relative Bottacin form}
\Ext^1_{\Oo_{\ol{\sS}_t}}(\Ee, \Ee\otimes K_{\ol{\sS}_t}) \wedge \Ext^1_{\Oo_{\ol{\sS}_t}}(\Ee, \Ee \otimes K_{\ol{\sS}_t})  \stackrel{\circ}{\longrightarrow} & \Ext^2_{\Oo_{\ol{\sS}_t}}(\Ee, \Ee\otimes K_{\ol{\sS}_t}^2) \stackrel{\tr}{\longrightarrow}
\\
\nonumber
& H^{2}(\ol{\sS}_t,K_{\ol{\sS}_t}^2)  \stackrel{\langle \cdot , \vartheta \rangle}{\longrightarrow} H^{2}(\ol{\sS}_t,K_{\ol{\sS}_t}) \cong \cC.
\end{align}
Thanks to \eqref{it sections of omega^-1} of Proposition \ref{pr relative canonical bundle}, we can check that the restriction of $\Theta$ to the subset $\Mm_C(n,d)$ of the central fibre is non-degenerate, hence defines a symplectic form there, as the tangent and the cotangent spaces to the moduli can be identified thanks to a trivialization of $K_{\KK_C} = \Oo_{\KK_C}(-2\sigma_\infty)$ over $\Tot(K_C) = \KK_C - \{ \sigma_\infty \}$. Furthermore, the canonical bundle is trivial over the generic fibres, $K_S \cong \Oo_S$, so in this case the cotangent space of $\M_{\ol{\sS}/\AA^1, \hH}(\vV_a)|_t = \M_{S}^{\H}(\v_a)$ is identified with the tangent space. Then, the Poisson form $\Theta_t$ over $t \neq 0$ is non-degenerate and defines a symplectic form which coincides with the Mukai form up to scaling. Hence, the restriction of $\Theta$ to $\mM_{\sS/\AA^1, \hH}(\vV_a,n\cC)$ defines a relative symplectic form $\varOmega$.
\end{proof}

\subsection{Degenerating the fixed locus of involutions}
\label{sc degenerating involutions}

In this section, we study the behaviour under the Donagi--Ein--Lazarsfeld degeneration of the involutions considered in Sections \ref{sc natural involutions} and \ref{sc Prymian fibrations}. By doing so, we generalize to the case of an arbitrary smooth surface equipped with an involution, the construction of Sawon and Shen \cite{sawon&shen} who treated the case described in Section \ref{sc sawon and shen}.

Consider a smooth surface $S$ equipped with an arbitrary involution $\zeta_S : S \to S$. Pick a smooth curve $C$ preserved by $\zeta_S$ and denote by $\zeta_C: C \to C$ the restriction of the involution to it. As in \eqref{eq setting L}, denote the normal bundle by $L= \Oo_S(C)|_C$, and note that $d \zeta_S : \zeta_S^* \Tt S \stackrel{\cong}{\longrightarrow} \Tt S$ sends $\zeta_C^* \Tt C$ to $\Tt C$ inducing the isomorphism
\begin{equation} \label{eq definition of f_0}
f_0 := [d\zeta_S]_{\Tt C} : \zeta_C^*L = \zeta_S^* \left(\quotient{\Tt S}{\Tt C} \right)  \longrightarrow L = \quotient{\Tt S}{\Tt C}.
\end{equation}
Let us also recall from \eqref{eq definition of eta_C*} the associated involution
\[
\zeta_{f_0} : \LL \longrightarrow \LL.
\]

One can provide an explicit description of $\zeta_{f_0}$ in the particular case of symplectic surfaces.

\begin{proposition} \label{pr restriction of antisymplectic involutions give eta_KK}
Suppose that $S$ is a symplectic surface and $\zeta_S^+$ a symplectic ({\it resp.} $\zeta_S^-$ an antisymplectic) involution on it. Pick a smooth curve $C$ preserved by $\zeta_C$ and denote by $L$ its normal bundle inside $S$. Then $L = K_C$ and $\zeta_{f_0} = \zeta_\KK^+$, as defined in \eqref{eq eta_KK plus} ({\it resp.} $\zeta_{f_0} = \zeta_\KK^-$, as defined in \eqref{eq eta_KK minus}).
\end{proposition}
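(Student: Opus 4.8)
The plan is to unwind the definitions of $\zeta_{f_0}$ and $\zeta_\KK^\pm$ and check they coincide, using the adjunction identification $L = \Oo_S(C)|_C \cong K_C$ (which is already recorded in the excerpt, since $L = K_C \otimes (K_S|_C)^{-1}$ and $K_S$ is trivial for a symplectic surface). The only real content is to identify, as automorphisms of $\LL = \PP(\Oo_C \oplus L) = \PP(\Oo_C \oplus K_C) = \KK$, the lift of $\zeta_C$ determined by the isomorphism $f_0 = [d\zeta_S]_{\Tt C}\colon \zeta_C^*L \to L$ with the lift determined by $\partial\zeta_C\colon \zeta_C^*K_C \to K_C$ (in the symplectic case) or by $-\partial\zeta_C$ (in the anti-symplectic case).

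First I would set up the comparison fibrewise. Over a point $x\in C$ (I may as well work away from the finitely many fixed points of $\zeta_C$ first, and extend by continuity), the map $f_0$ is the quotient map $[d\zeta_S(x)]\colon \Tt_{\zeta_C(x)}S / \Tt_{\zeta_C(x)}C \to \Tt_x S/\Tt_x C$, while $\partial\zeta_C(x)$ is the cotangent map $K_{C,\zeta_C(x)} = (\Tt_{\zeta_C(x)}C)^* \to (\Tt_x C)^* = K_{C,x}$ dual to $d\zeta_C(x)$. The adjunction isomorphism $\Oo_S(C)|_C \cong K_C$ comes from the exact sequence $0\to\Tt C\to \Tt S|_C\to N_{C/S}\to 0$ together with the symplectic form $\Omega_S$ (trivializing $K_S = \bigwedge^2\Tt^*S$): explicitly, $\Omega_S$ pairs $N_{C/S}$ with $\Tt C$, giving $N_{C/S}\cong (\Tt C)^* = K_C$. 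So the key computation is: under this identification, does $f_0$ go to $\partial\zeta_C$? Writing the pairing out, for $v\in\Tt_x C$ and a lift $\tilde w$ of a normal vector $w\in N_{C/S,x}$, one has $\langle w, v\rangle_{\mathrm{adj}} = \Omega_S(\tilde w, v)$, and $\zeta_S^*\Omega_S = \pm\Omega_S$ controls exactly how $d\zeta_S$ intertwines this pairing with its analogue at $\zeta_C(x)$. In the symplectic case $\zeta_S^*\Omega_S = \Omega_S$, so $\Omega_S(d\zeta_S(\tilde w), d\zeta_S(v)) = \Omega_S(\tilde w, v)$, which says precisely that $f_0$ corresponds to the map dual to $d\zeta_C$, i.e.\ to $\partial\zeta_C$; in the anti-symplectic case the sign flips, giving $-\partial\zeta_C$. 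Hence by \eqref{eq definition of eta_C*}, \eqref{eq eta_KK plus} and \eqref{eq eta_KK minus}, $\zeta_{f_0} = \zeta_{\partial\zeta_C} = \zeta_\KK^+$ (resp.\ $\zeta_{f_0} = \zeta_{-\partial\zeta_C} = \zeta_\KK^-$).

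I would then note that two lifts of $\zeta_C$ to $\LL$ that agree as bundle maps $\zeta_C^*L\to L$ over the dense open set $C\smallsetminus\Fix(\zeta_C)$ agree everywhere — both are regular automorphisms of the projective surface $\LL$ extending the same rational map — so the fixed-point subtlety at $\Fix(\zeta_C)$ does not obstruct the identification; alternatively the pairing computation above is valid pointwise at every $x\in C$, including fixed points, since $d\zeta_S(x)$ and $\Omega_S(x)$ are defined there. Finally $\zeta_{f_0}$ and $\zeta_\KK^\pm$ are honest lifts of $\zeta_C$ to $\KK$ rather than its twist by $\imath$, because $f_0$ (resp.\ $\partial\zeta_C$) is a genuine isomorphism of line bundles, not its negative — so there is no ambiguity modulo $\imath$ left to resolve.

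\medskip
\emph{Expected main obstacle.} The one delicate point is pinning down the precise adjunction isomorphism $\Oo_S(C)|_C\cong K_C$ and checking that it is $\zeta_S$-equivariant with the correct sign, i.e.\ that the natural isomorphism is built from $\Omega_S$ in such a way that pulling back by $\zeta_S^\pm$ introduces the sign $\pm 1$. This is essentially bookkeeping with the conormal sequence and the symplectic pairing, but it is where all the content sits; everything else is formal extension of a dense-open identification of automorphisms of $\KK$.
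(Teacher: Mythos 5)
Your proposal is correct and follows essentially the same route as the paper: the paper's proof also decomposes $d\zeta_S|_C$ into its tangential eigenvalue $d\zeta_C$ and its normal eigenvalue $f_0$, and uses the fact that (anti-)symplecticity forces the product of these eigenvalues to be $\pm 1$, which is exactly your statement that $\zeta_S^*\Omega_S=\pm\Omega_S$ intertwines the symplectic pairing between $\Tt C$ and $N_{C/S}$, yielding $f_0=\pm(d\zeta_C)^{-1}=\pm\partial\zeta_C$. Your version is somewhat more careful about the precise adjunction isomorphism $N_{C/S}\cong K_C$ and the behaviour at fixed points, but the underlying argument is the same.
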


\begin{proof}
By adjunction formula and the triviality of the canonical bundle $K_S \cong \Oo_S$ of a symplectic surface, one gets $L = K_C$ in this case.

Note that the isomorphism $\partial \zeta_C : \zeta_C^*K_C \to K_C$ that lifts the action of $\zeta_C$ to the canonical bundle $K_C$, is the inverse of $d \zeta_C$.   

Since $\zeta_S^+$ is symplectic ({\it resp.} $\zeta_S^-$ is antisymplectic), $d \zeta_S : \Tt S \to \Tt S$ has eigenvalues whose product is $1$ ({\it resp.} $-1$). Then, over $C$, $d \zeta_S$ has eigenvalues $d \zeta_C$ and $f_0 = \partial \zeta_C$ ({\it resp.} $f_0 = -\partial \zeta_C$). This completes the proof after recalling \eqref{eq eta_KK plus} and \eqref{eq eta_KK minus}.
\end{proof}

We defined $\ol{\sS}$ in \eqref{eq definition of Ss} and  $\sS$ in \eqref{eq definition of Ss'}, having generic fibres $\ol{\sS}_t = \sS_t = S$ and central fibres $\ol{\sS}_0 = S \cup \LL$ and $\sS_0 = \Tot(L)$. As our involution $\zeta_S$ preserves $C$, $\zeta_S \times \id_{\PP^1}$ lifts to the blow-up $\ol{\sS}$ giving another involution $\zeta_{\ol{\sS}} : \ol{\sS} \to \ol{\sS}$ which preserves $\cC$ as defined in \eqref{eq definition of Cc}. Furthermore, at $t = 0$, $\zeta_{\ol{\sS}}$ preserves the irreducible component $S \subset \ol{\sS}_0$ and we denote by $\zeta_\sS : \sS \to \sS$ the restriction of this involution to $\sS$. One has commutativity with the structural morphisms,
\[
\xymatrix{
\sS \ar[rr]^{\zeta_\sS} \ar[rd] & & \sS \ar[ld]
\\
 & \PP^1, &
}
\]
giving $\zeta_S$ on every generic fibre at $t \neq 0$. We now examine the behaviour of $\zeta_\sS$ at the central fibre $\sS_0$.

\begin{lemma} \label{lm eta_sS at t=0}
With the notation as above, $\zeta_\sS$ coincides with $\zeta_S$ on the generic fibres at $t \neq 0$, and, over $t = 0$, with the restriction of $\zeta_{f_0}$ to $\Tot(L)$.
\end{lemma}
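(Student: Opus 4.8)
The plan is to reduce the statement to an analysis of how the lifted involution $\zeta_{\ol{\sS}}$ acts on the exceptional divisor of the blow-up $r:\ol{\sS}\to S\times\PP^1$. The assertion over $t\neq 0$ is immediate: away from the centre $C\times\{0\}$ the morphism $r$ is an isomorphism, so it restricts to an isomorphism $\sS|_{t\neq 0}\xrightarrow{\sim}S\times(\PP^1\setminus\{0\})$ intertwining $\zeta_{\ol{\sS}}$ with $\zeta_S\times\id_{\PP^1}$; hence $\zeta_{\sS}$ agrees with $\zeta_S$ on every generic fibre $\sS_t\cong S$.

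For $t=0$, recall that the exceptional divisor of $r$ is the $\PP^1$-bundle $\LL=\PP(N)$ over $C\cong C\times\{0\}$, where $N$ denotes the normal bundle of $C\times\{0\}$ in $S\times\PP^1$, that $\sS_0=\Tot(L)$ is its complement $\LL\setminus\sigma_\infty$, and that $\zeta_{\ol{\sS}}$ — being the lift of an automorphism preserving the smooth centre — preserves $\LL$ and acts on it as the projectivization of the bundle automorphism of $N$ induced by the derivative of $\zeta_S\times\id_{\PP^1}$ along $C\times\{0\}$, an automorphism lying over $\zeta_C$ under the identification $C\times\{0\}\cong C$. The next step is to identify $N$ and this action. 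There is a canonical, $(\zeta_S\times\id_{\PP^1})$-equivariant splitting $N\cong(\Tt S|_C/\Tt C)\oplus\Oo_C=L\oplus\Oo_C$, in which the first summand is the normal bundle of $C$ in $S$ (equal to $L$ by \eqref{eq setting L}) and the second, trivial summand is the $\PP^1$-direction. On this splitting the derivative of $\zeta_S\times\id_{\PP^1}$ acts as the identity on the $\Oo_C$-factor (the second factor of the automorphism being $\id_{\PP^1}$) and as $[d\zeta_S]_{\Tt C}$ on $L$; the latter is exactly the isomorphism $f_0$ of \eqref{eq definition of f_0}. Hence $\zeta_{\ol{\sS}}|_{\LL}=\PP(f_0\oplus\id_{\Oo_C})$ over $\zeta_C$, which is precisely $\zeta_{f_0}$ as defined in \eqref{eq definition of eta_C*}. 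Restricting to $\sS_0=\Tot(L)=\LL\setminus\sigma_\infty$ yields the lemma.

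The hard part — and really the only point needing care — is the second paragraph: making precise and justifying that the lift of an automorphism to a blow-up along a smooth centre restricts on the exceptional divisor to the projectivized action on the normal bundle, and that the decomposition of $N$ into the ``surface direction'' $L$ and the ``$\PP^1$-direction'' $\Oo_C$ is respected, rather than mixed, by $\zeta_S\times\id_{\PP^1}$. Both become routine once one works in local analytic coordinates on $S\times\PP^1$ adapted to $C\times\{0\}$, in which $\zeta_S\times\id_{\PP^1}$ is block-diagonal; it also remains to keep track of the bookkeeping fixing which of the two sections of $\PP(\Oo_C\oplus L)$ is $\sigma_\infty$, so that $\Tot(L)=\LL\setminus\sigma_\infty$ is the locus it parametrizes, consistently with the rest of the paper.
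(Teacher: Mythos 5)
Your proof is correct and follows the same route as the paper's (much terser) argument: the paper likewise observes that away from the centre the blow-up changes nothing, and that on the exceptional divisor $\LL$ the lifted involution is the one induced by $d\zeta_S$ on the normal bundle, i.e.\ $\zeta_{f_0}$. Your version simply spells out the splitting $N\cong L\oplus\Oo_C$ and the block-diagonal action, which the paper leaves implicit.
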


\begin{proof}
As $\ol{\sS}$ is defined by blowing-up $C \times \{ 0 \}$ inside $S \times \PP^1$, the involution given by $\zeta_{\ol{\sS}}$ is $\zeta_S$ on a generic fibre, and on the exceptional divisor $\LL$ is the involution induced by $d\zeta_S$ on $L$, {\it i.e.} $\zeta_{f_0}$. 
\end{proof}

Given $C \subset S$, we construct $\cC$ as in \eqref{eq definition of Cc} and we define $\{ n\cC \}_\sS$ as in \eqref{eq definition of Bb}. Consider a Mukai vector $\vV_a$ as in \eqref{eq definition of v}. Pick a polarization $\H$ on $S$, inducing $\hH$ as described in \eqref{eq definition of Hh}, and consider $\mM_{\sS/\AA^1, \hH}(\vV_a,n\cC)$ as defined in \eqref{eq definition of Mm}, classifying $\hH$-semistable relative sheaves on $\ol{\sS}$ with Mukai vector $\vV_a$, and whose support is parametrized by $\{n\cC\}_\sS$. In particular, their support restricts to $\sS$, so, taking pull-back under $\eta_\sS$, one obtains a birrational involution
\[
\birrat{\mM_{\sS/\AA^1, \hH}(\vV_a,n\cC)}{\mM_{\sS/\AA^1, \hH}(\vV_a,n\cC)}{\Ff}{\zeta_\sS^* \Ff,}{}{\widehat{\zeta}_\sS}
\]
compatible with the structural morphism to $\AA^1$.

Consider an $\zeta_S$-invariant line bundle $J \to S$ which satisfies condition \eqref{eq condition for J and v} with respect to $\v_a$. Starting from such $J$, we construct a line bundle on $\ol{\sS}$
\begin{equation} \label{eq definition of jJ}
\jJ = p_S^*J,
\end{equation}
where $p_S$ is obtained from the structural morphism of the blow-up $\ol{\sS} \to S \times \PP^1$ composed with the projection of $S \times \PP^1$ onto $S$. Note as well that, over $t \neq 0$, it restricts to the original line bundle,
\[
\jJ|_t \cong J,
\]
while, at $t = 0$, one gets 
\[
\jJ|_0 \cong p^*J_C,
\]
where $p$ is the structural projection $\Tot(L) \to C$, and
\[
J_C := J|_C.
\]
We then observe that \eqref{eq condition for J and v} holds fibrewise for $\jJ$ and $\vV_a$.

The following is inspired by \cite[Lemma 4]{sawon&shen}.

\begin{proposition-definition} \label{prop-def eta}
There exists a birregular involution
\begin{equation} \label{eq eEta}
\xi_{\jJ,\sS}: \mM_{\sS/\AA^1, \hH}(\vV_a,n\cC) \longrightarrow \mM_{\sS/\AA^1, \hH}(\vV_a,n\cC).
\end{equation}
given by sending a sheaf $\Ff$ supported on a curve inside $\sS|_t$ to $\Ee \! xt^2_{\ol{\sS}/\PP^1}(\Ff, \jJ(-\LL))|_\sS$. 

The restriction of $\xi_{\jJ,\sS}$ to a generic fibre over $t \neq 0$ is $\xi_{J,S}$, while at the central fibre on $t = 0$ it restricts to $\check{\xi}_{J_C,L}$.
\end{proposition-definition}

\begin{proof}
Recall that $\ol{\sS}$ is smooth with canonical bundle $K_{\ol{\sS}} = r^*K_{S \times \PP^1}(\LL)$ Consider the embedding $\jmath_t : \ol{\sS}_t \hookrightarrow \ol{\sS}$. The canonical bundle for this embedding is 
\[
\omega_{\jmath_t} = \Oo_{\ol{\sS}}(\LL)|_{\ol{\sS}_t},
\]
being trivial over a generic fibre over $t \neq 0$. Then, given a sheaf $\Ff$ supported on $\sS_t$ with $t \neq 0$, Grothendieck-Verdier duality implies that 
\[
\Ee \! xt^2_{\ol{\sS}/\PP^1}(\jmath_{t,*}\Ff, \jJ) \cong \Ee \! xt^1_{\ol{\sS}_t}(\Ff, \jJ|_t),
\]
so $\xi_{\jJ,\sS}$ restricts to $\xi_{J,S}$ there.

Also, observe that over for a sheaf supporte on the open subset of the central fibre $\Tot(L) = \sS_0 \subset \ol{\sS}_0$, one also has that  
\[
\Ee \! xt^2_{\ol{\sS}/\PP^1}(\jmath_{0,*}\Ff, \jJ|_0) \cong \Ee \! xt^1_{\ol{\sS}_0}(\Ff, \jJ|_0) \cong \Ee \! xt^1_{\LL}(\Ff, p^*J_C),
\]
so $\xi_{\jJ,\sS}$ restricts to $\xi_{J_0,\LL}$ which further corresponds to $\check{\xi}_{J_C,L}$ after Lemma \ref{lm involutions and spectral correspondence}.

Finally, since $\xi_{J,S}$ and $\check{\xi}_{J_C,L}$ preserves stability, so does $\xi_{\jJ,\sS}$ fibrewise, defining a biregular involution of the moduli space. 
\end{proof}

Observe that $\jJ$ is $\zeta_\sS$-invariant. Then, $\widehat{\zeta}_\sS$ and $\xi_{\jJ,\sS}$ commute and one can consither the composition 
\[
\lambda_{\jJ,\sS} := \widehat{\zeta}_\sS \circ \xi_{\jJ,\sS} : \mM_{\sS/\AA^1, \hH}(\vV_a,n\cC) \dashrightarrow \mM_{\sS/\AA^1, \hH}(\vV_a,n\cC).
\]

\begin{lemma} \label{lm fibrewise description of lambda}
With the notation as above, $\lambda_{\jJ,\sS}$ coincides with $\check{\lambda}_{J_C,f_0}$ over $t = 0$, and with $\lambda_{J,S}$ over a generic fibre at $t \neq 0$. 
\end{lemma}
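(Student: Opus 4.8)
The plan is to reduce the statement to the two fibrewise computations already carried out for the constituent involutions, using the fact that $\lambda_{\jJ,\sS}$ is by definition the composition $\widehat{\zeta}_\sS \circ \xi_{\jJ,\sS}$ and that this composition is compatible with the structural morphism to $\AA^1$. First I would fix a point $t \in \AA^1$ and restrict everything to the fibre over $t$; since both $\widehat{\zeta}_\sS$ and $\xi_{\jJ,\sS}$ are compatible with the projection to $\AA^1$, so is their composition, and hence $\lambda_{\jJ,\sS}|_t = (\widehat{\zeta}_\sS|_t) \circ (\xi_{\jJ,\sS}|_t)$. So the statement follows by combining the fibrewise descriptions of the two factors.

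For the generic fibre $t \neq 0$: by Lemma~\ref{lm eta_sS at t=0}, $\zeta_\sS$ restricts to $\zeta_S$, so $\widehat{\zeta}_\sS|_t = \widehat{\zeta}_S$; and by Proposition-definition~\ref{prop-def eta}, $\xi_{\jJ,\sS}|_t = \xi_{J,S}$ (using $\jJ|_t \cong J$). Since $J$ is $\zeta_S$-invariant, $\widehat{\zeta}_S$ and $\xi_{J,S}$ commute and their composition is precisely $\lambda_{J,S} = \widehat{\zeta}_S \circ \xi_{J,S}$ as defined in Section~\ref{sc Prymian fibrations}. This gives the claim at $t \neq 0$. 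For the central fibre $t = 0$: again by Lemma~\ref{lm eta_sS at t=0}, $\zeta_\sS|_0$ is the restriction of $\zeta_{f_0}$ to $\Tot(L)$, hence under the spectral correspondence $\widehat{\zeta}_\sS|_0$ corresponds to $\check{\zeta}_{f_0}$ by Lemma~\ref{lm involutions and spectral correspondence}; and by Proposition-definition~\ref{prop-def eta}, $\xi_{\jJ,\sS}|_0$ corresponds to $\check{\xi}_{J_C,L}$ (using $\jJ|_0 \cong p^*J_C$). By definition $\check{\lambda}_{J_C,f_0} = \check{\xi}_{J_C,L} \circ \check{\zeta}_{f_0}$ — or rather the relevant composition as in \eqref{eq lambda} — so it remains only to check that the order of composition matches and that the two factors commute, which follows from the $\zeta_{f_0}$-invariance of $J_C$ (equivalently the $\zeta_\sS$-invariance of $\jJ$ noted before the statement).

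The only genuinely delicate point is making sure the composition order and the precise form of $\check{\lambda}_{J_C,f_0}$ agree: one needs $\check{\xi}_{J_C,L} \circ \check{\zeta}_{f_0} = \check{\zeta}_{f_0} \circ \check{\xi}_{J_C,L}$, so that the ambiguity in writing $\lambda_{\jJ,\sS} = \widehat{\zeta}_\sS \circ \xi_{\jJ,\sS}$ versus $\xi_{\jJ,\sS} \circ \widehat{\zeta}_\sS$ is harmless, and then one reads off the explicit formula \eqref{eq lambda} for $\check{\lambda}_{J_C,f_0}$. Both of these are already recorded: the commutativity is exactly the observation preceding the lemma (that $\jJ$ is $\zeta_\sS$-invariant, which fibrewise gives $\zeta_C$-invariance of $J_C$ and hence commutativity of $\widehat{\zeta}_f$ with $\xi_{J,\LL}$ as in Lemma~\ref{lm involutions and spectral correspondence 2}), and the explicit formula is \eqref{eq lambda} in Lemma~\ref{lm involutions and spectral correspondence 2}. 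So I do not expect any real obstacle; the proof is essentially a two-line assembly of Lemmas~\ref{lm eta_sS at t=0}, \ref{lm involutions and spectral correspondence}, \ref{lm involutions and spectral correspondence 2} and Proposition-definition~\ref{prop-def eta}.

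\begin{proof}
Both $\widehat{\zeta}_\sS$ and $\xi_{\jJ,\sS}$ are compatible with the structural morphism to $\AA^1$, hence so is their composition $\lambda_{\jJ,\sS}$, and for each $t \in \AA^1$ one has $\lambda_{\jJ,\sS}|_t = (\widehat{\zeta}_\sS|_t) \circ (\xi_{\jJ,\sS}|_t)$. Over $t \neq 0$, Lemma~\ref{lm eta_sS at t=0} gives $\widehat{\zeta}_\sS|_t = \widehat{\zeta}_S$ and Proposition-definition~\ref{prop-def eta} gives $\xi_{\jJ,\sS}|_t = \xi_{J,S}$; since $J$ is $\zeta_S$-invariant these commute and their composition is $\lambda_{J,S}$ by definition. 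Over $t = 0$, Lemma~\ref{lm eta_sS at t=0} identifies $\zeta_\sS|_0$ with the restriction of $\zeta_{f_0}$ to $\Tot(L)$, so by Lemma~\ref{lm involutions and spectral correspondence} the spectral correspondence takes $\widehat{\zeta}_\sS|_0$ to $\check{\zeta}_{f_0}$, while Proposition-definition~\ref{prop-def eta} identifies $\xi_{\jJ,\sS}|_0$ with $\check{\xi}_{J_C,L}$. As $\jJ$ is $\zeta_\sS$-invariant, $J_C = J|_C$ is $\zeta_C$-invariant, so $\check{\zeta}_{f_0}$ and $\check{\xi}_{J_C,L}$ commute (Lemma~\ref{lm involutions and spectral correspondence 2}), and their composition is $\check{\lambda}_{J_C,f_0}$, with explicit description \eqref{eq lambda}. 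This proves the assertion on both fibres.
\end{proof}
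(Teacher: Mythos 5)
Your proof is correct and follows essentially the same route as the paper, whose own proof is just the one-line observation that the lemma follows from Proposition-definition \ref{prop-def eta} together with the fibrewise descriptions of $\jJ$ and $\vV_a$ (with Lemma \ref{lm eta_sS at t=0} implicitly supplying the behaviour of $\widehat{\zeta}_\sS$ on fibres, exactly as you use it). Your additional care about the order of composition is harmless and correctly resolved by the commutativity coming from the $\zeta_\sS$-invariance of $\jJ$.
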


\begin{proof}
The lemma is a consequence of Proposition-definition \ref{prop-def eta} and the fibrewise description of $\jJ$ and $\vV_a$.
\end{proof}

We now study the closure of their fixed points and their structure morphism.

\begin{proposition} \label{pr flatness}
The restriction of the structural morphism to $\AA^1$ of the closed subvarieties $\overline{\Fix(\widehat{\zeta}_\sS)}$, $\Fix(\xi_{\jJ,\sS})$ and $\overline{\Fix(\lambda_{\jJ,\sS})}$ is flat.
\end{proposition}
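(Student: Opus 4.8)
The plan is to reduce to the standard criterion that a reduced closed subscheme $Z$ of a scheme over the smooth affine curve $\AA^1$ is flat over $\AA^1$ exactly when no irreducible component of $Z$ is contained in a fibre. Since $\mM_{\sS/\AA^1, \hH}(\vV_a,n\cC)\to\AA^1$ is a trivial family over $\AA^1\setminus\{0\}$, the only fibre that can contain a component of one of the three subvarieties is the central one $\Mm^L_C(n,d)$; so it suffices to show that $\overline{\Fix(\widehat{\zeta}_\sS)}$, $\Fix(\xi_{\jJ,\sS})$ and $\overline{\Fix(\lambda_{\jJ,\sS})}$ have no irreducible component contained in the central fibre.

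First I would note that, by Lemmas \ref{lm involutions and spectral correspondence} and \ref{lm involutions and spectral correspondence 2} and Proposition-definition \ref{prop-def eta}, the three involutions restrict over $t=0$ to the \emph{biregular} involutions $\check{\zeta}_{f_0}$, $\check{\xi}_{J_C,L}$ and $\check{\lambda}_{J_C,f_0}$; since moreover $\xi_{\jJ,\sS}$ is biregular everywhere, each of the three involutions is biregular on an open subset $U$ of $\mM_{\sS/\AA^1, \hH}(\vV_a,n\cC)$ that contains the whole central fibre and is dense in every other fibre. Consequently $\overline{\Fix(\widehat{\zeta}_\sS)}$ and $\overline{\Fix(\lambda_{\jJ,\sS})}$ have exactly the irreducible components of the fixed loci formed inside $U$, so passing to closures creates no new component over $t=0$, and it is enough to bound the fixed loci inside $U$.

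Next I would restrict the relative support morphism $\mathbf{h}$ of \eqref{eq relative Beauville fibration} to each of the three fixed loci; these map to $\{n\cC\}_\sS$, which is flat over $\AA^1$ by Theorem \ref{tm Mm_S} (its central fibre being the Hitchin base $\bigoplus_{i}H^0(C,L^{\otimes i})$). Over the dense open subset $\{n\cC\}_\sS^{\ssl}$ parametrising smooth spectral curves with smooth lift, the fibres of these morphisms are, by Proposition \ref{pr q embedding on ssl}, the fibrewise descriptions \eqref{eq fibrewise description of xi}--\eqref{eq eta_J C'}, and Remarks \ref{rm smooth fibres are Prym varieties}--\ref{rm Prym at ssl}, respectively a Jacobian (or its quotient by translation by a $2$-torsion line bundle), a torsor over the $2$-torsion of a Jacobian, and a Prym variety; in each case they fit into a family flat over $\{n\cC\}_\sS^{\ssl}$. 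Composing flat morphisms, the restriction of each of the three fixed loci to $\{n\cC\}_\sS^{\ssl}$ is flat over $\AA^1$, so every one of its irreducible components dominates $\AA^1$.

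What remains --- and this is the step I expect to be the main obstacle --- is to exclude an irreducible component of a fixed locus lying entirely over $t=0$, which by the previous step would have to be supported over the complement of $\{n\cC\}_\sS^{\ssl}$ in the central fibre, i.e.\ over non-smooth (in particular non-reduced) spectral curves, where the maps above need not be surjective onto the fixed locus. I would handle this by a dimension count: using the genus identity \eqref{eq equal genus}, its analogue for $D\subset T$, and the descent descriptions of Lemmas \ref{lm Nn supported on vector space} and \ref{lm Pp supported on vector space}, one checks that the central fibre of each fixed locus is equidimensional of the same dimension as the generic fibre, with its open stratum over smooth spectral curves dense in it. Since that open stratum is, by the previous paragraph, the central fibre of a closed subvariety of the total fixed locus that is already flat over $\AA^1$ and of the same dimension, there is no room for an extra component inside the central fibre. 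Together with the reduction of the first paragraph, this proves that the structure morphisms of $\overline{\Fix(\widehat{\zeta}_\sS)}$, $\Fix(\xi_{\jJ,\sS})$ and $\overline{\Fix(\lambda_{\jJ,\sS})}$ to $\AA^1$ are flat.
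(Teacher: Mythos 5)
Your opening reduction coincides with the paper's: over the irreducible one-dimensional base $\AA^1$, flatness of a reduced closed subvariety of the (flat) total space amounts to no irreducible component being contained in the central fibre, i.e.\ to the subvariety being the closure of its restriction to $\AA^1 - \{0\}$. From that point the paper argues directly from biregularity of the involutions on (a neighbourhood of) the central fibre: at a smooth fixed point $p$ of $\mM_0$ the differential of the involution is a linear involution of $\Tt_p\mM$ preserving $\Tt_p\mM_0$ and acting trivially on $\Tt_p\mM/\Tt_p\mM_0 \cong \Tt_0\AA^1$; being diagonalizable it has a $+1$-eigenvector surjecting onto $\Tt_0\AA^1$, so the fixed locus cannot be contained in the fibre near $p$. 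Your route through the support morphism and the flat families of Jacobians and Pryms over the $\ssl$ locus is different and correct as far as it goes, but it only controls the fixed loci over $\{n\cC\}_\sS^{\ssl}$.

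The genuine gap is in your final paragraph. The claim that ``the central fibre of each fixed locus is equidimensional of the same dimension as the generic fibre, with its open stratum over smooth spectral curves dense in it'' is precisely the assertion that $\Fix(\check{\zeta}_{f_0})$, $\Fix(\check{\xi}_{J_C,L})$ and $\Fix(\check{\lambda}_{J_C,f_0})$ have no irreducible component supported over the discriminant of the Hitchin base, and this is essentially equivalent to what you set out to prove; it does not follow from the genus identity \eqref{eq equal genus} or from Lemmas \ref{lm Nn supported on vector space} and \ref{lm Pp supported on vector space}, which only describe these loci over the $\ssl$ locus. For instance, $\Fix(\check{\zeta}_K^-)$ contains the locus of pairs $(E,0)$ with $E \cong \zeta_C^*E$ inside the nilpotent cone; that such loci lie in the closure of the fixed points over smooth spectral curves requires an argument (e.g.\ the infinitesimal one above, or a $\CC^*$-action) and is not a formal dimension count. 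As written, the ``main obstacle'' you correctly isolate is restated rather than resolved.
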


\begin{proof}
Since $\mM_{\sS/\AA^1, \hH}(\vV_a,n\cC)$ is flat over $\AA^1$, which is an irreducible and one dimensional base scheme, and the structural morphism trivializes (hence, it is flat) over $\AA^1 -\{ 0 \}$, one only needs to check that the fibre at $t = 0$ arises as the closure of $\overline{\Fix(\widehat{\zeta}_\sS)}|_{\AA^1 -\{ 0 \}}$, $\Fix(\xi_{\jJ,\sS})|_{\AA^1 -\{ 0 \}}$ and $\overline{\Fix(\lambda_{\jJ,\sS})}|_{\AA^1 -\{ 0 \}}$ inside $\mM_{\sS/\AA^1, \hH}(\vV_a,n\cC)$. Note that this follows trivially for $\xi_{\jJ,\sS}$ as it is birregular, but also for $\widehat{\zeta}_\sS$ and $\lambda_{\jJ,\sS}$, which are biregular at the central fibre.
\end{proof}

Suppose now that $S$ denotes a smooth surface equipped with involution $\zeta_S : S \to S$ whose quotient map $\q_S : S \to T := S/\zeta_S$ gives a smooth surface. Pick a smooth curve $D \subset T$ and consider $C$ as in \eqref{eq C from D}, which is again smooth. Denote by $\zeta_C: C \to C$ the restriction of $\zeta_S$ to $C$, and by $\q_C : C \to D=C/\zeta_C$ the restriction there of $\q_S$. As in \eqref{eq setting L}, denote the associated normal bundles by $L= \Oo_S(C)|_C$, as well as $W=\Oo_T(D)|_D$. Consider as well the ruled surfaces $\LL$ and $\WW$ associated to $L$ and $W$, as defined in \eqref{eq definition of LL}. Note that the composition with $d \zeta_S : \zeta_S^* \Tt S \to \Tt S$ provides an isomorphism $f_0 : \zeta_C^*L \stackrel{\cong}{\to} L$ and recall from \eqref{eq definition of eta_C*} the associated involution $\zeta_{f_0}$, as well as the projection $\q_{f_0}$ from \eqref{eq definition of q_LL}.

\begin{lemma} \label{lm eta_Ss_0 = q_f_0}
Lift, via $f_0$ as above, the action of $\zeta_C$ to the normal line bundle $L$ of $C$ inside $S$. Then, $L$ descends under this action to the normal bundle $W$ of $D$ inside $T$. Hence,
\[
\WW \cong \quotient{\LL}{\zeta_{f_0}}.
\]
\end{lemma}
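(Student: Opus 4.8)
The plan is to exhibit $(L, f_0)$ as the pullback along $\q_C$ of the line bundle $W = \Oo_T(D)|_D$ carrying its tautological, fibrewise-trivial $\zeta_C$-linearization, and then to invoke Kempf's descent lemma exactly in the form recalled in Remark \ref{rm descent}. First I would establish the underlying isomorphism of line bundles $L \cong \q_C^*W$. Since $C$ is smooth it is reduced, and the hypothesis $C = D\times_T S$ from \eqref{eq C from D} says precisely that the ideal sheaf of $C$ is the extension of the ideal sheaf of $D$, whence $\Oo_S(C)\cong\q_S^*\Oo_T(D)$; restricting to $C$ gives $L = \Oo_S(C)|_C \cong \q_C^*\bigl(\Oo_T(D)|_D\bigr) = \q_C^*W$. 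This isomorphism is $\zeta_C$-equivariant once $\Oo_S(C)$ is endowed with the canonical linearization coming from the $\zeta_S$-invariance of $C$ and $\q_S^*\Oo_T(D)$ with the pullback linearization, the latter being available because $\q_S\circ\zeta_S = \q_S$. Under the canonical identification $\Oo_S(C)|_C \cong \Tt S|_C/\Tt C$ this canonical linearization is, by the very definition \eqref{eq definition of f_0}, the map $f_0 = [d\zeta_S]_{\Tt C}$; hence $(L, f_0)$ is the $\q_C$-pullback of $W$ together with the descent datum for $D$, equivalently $W$ is recovered as $\bigl(\q_{C*}L\bigr)^{\zeta_C}$ with respect to $f_0$.

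Next I would check concretely the hypothesis of Kempf's lemma, that $f_0$ acts as the identity on the fibre of $L$ over each point of $\Fix(\zeta_C)$. If $\zeta_S$ is fixed-point-free this is vacuous. Otherwise, since $T = S/\zeta_S$ is smooth, at each of its fixed points $d\zeta_S$ is a linear involution of the tangent plane that is neither the identity (else $\zeta_S$ would be the identity) nor minus the identity (else $T$ would have an $A_1$ singularity); it is therefore a reflection with eigenvalues $+1$ and $-1$, and $R := \Fix(\zeta_S)$ is a smooth divisor. Let $x\in\Fix(\zeta_C) = C\cap R$. As $\zeta_C$ is a nontrivial involution of the smooth curve $C$, it acts on $\Tt_x C$ by $-1$; since $\Tt_x C$ is a $d\zeta_S$-invariant line it must then be the $(-1)$-eigenspace of $d\zeta_S|_x$, so $d\zeta_S|_x$ acts by $+1$ on the complementary normal direction $L_x = \Tt_x S/\Tt_x C$. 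Thus $f_0$ is the identity on $L_x$ for every such $x$ — which of course also follows from $L$ being a pullback, but this is the form in which Remark \ref{rm descent} applies.

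Kempf's descent lemma then gives that $L$ descends, via the action lifted by $f_0$, to a line bundle on $D$, and by the equivariant identification of the first paragraph this bundle is $W$; Remark \ref{rm descent} moreover identifies $\quotient{\LL}{\zeta_{f_0}}$ with $\PP(\Oo_D\oplus W) = \WW$, which is the assertion. The step I expect to be the main obstacle is the compatibility asserted in the first paragraph: that the linearization $f_0$, defined through $d\zeta_S$ on the normal bundle, is the one witnessing descent to $W = \Oo_T(D)|_D$ itself, rather than to its twist by the $2$-torsion line bundle associated to $\q_C$. I would resolve this either by the functoriality of the identification $\Oo_S(C)|_C\cong\q_C^*\bigl(\Oo_T(D)|_D\bigr)$ with its natural linearizations, or, more concretely, by the reflection computation of the second paragraph, which pins down the fibrewise action at the ramification points of $\q_C$ and thereby rigidifies the descended bundle.
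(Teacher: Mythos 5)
Your proof is correct and follows essentially the same route as the paper's: descend $L$ to $W$ via the Kempf criterion of Remark \ref{rm descent}, using that $d\zeta_S$ induces $f_0$ on the normal bundle (the paper phrases the identification of the descended bundle through $\Tt T$ and $\Tt D$ being the invariant subbundles of $\Tt S$ and $\Tt C$, whereas you go through $\Oo_S(C)\cong\q_S^*\Oo_T(D)$). Your version is more careful than the paper's two-line argument — in particular the eigenvalue computation at points of $C\cap\Fix(\zeta_S)$ verifying the fibrewise-trivial hypothesis, and the check that the descended bundle is $W$ itself and not a $2$-torsion twist, are details the paper leaves implicit.
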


\begin{proof}
The first statement follows easily as the tangent bundles $\Tt\, T$ and $\Tt D$ coincide, respectively, with the $\zeta_S$-invariant and $\zeta_C$-invariant subbundles of $\Tt S$ and $\Tt C$. The second statement follows from the first and Remark \ref{rm descent}.
\end{proof}

Following \eqref{eq definition of Ss} and \eqref{eq definition of Ss'}, define $\ol{\tT}$ as well as $\tT$, having generic fibres $\ol{\tT}_t = \tT_t = T$, over $t \neq 0$, and central fibre $\ol{\tT} = T \cup \WW$ and $\tT_0 = \Tot(W)$. Since the preimage of $D \times \{ 0 \}$ under the projection $\q_S \times \id_{\PP^1} : S \times \PP^1 \to T \times \PP^1$ is the divisor $\LL \subset \ol{\sS}$, one obtains the projection $\q_{\ol{\sS}} : \ol{\sS} \to \ol{\tT}$ commuting with the structural morphisms to $\PP^1$. At $t = 0$, $\q_{\ol{\sS}}$ sends the irreducible component $S$ of $\ol{\sS}_0$ to the irreducible component $T$ of $\ol{\tT}_0$. Then, by restricting $\q_{\ol{\sS}}$ to $\sS$, we can define $q_{\sS} : \sS \to \tT$ commuting with the structural morphisms,  
\begin{equation} \label{eq sS to tT}
\xymatrix{
\sS \ar[rr]^{\q_\sS} \ar[rd] & & \tT \ar[ld]
\\
 & \PP^1. &
}
\end{equation}

\begin{lemma} \label{lm q_sS at t=0}
With the notation as above, $\q_\sS$ coincide with $\q_S$ on the generic fibres at $t \neq 0$, and, with the restriction of $\q_{f_0}$ to $\Tot(L)$ over $t = 0$. Hence, one has that $\q_\sS$ is the quotient map of the action of $\zeta_\sS$.
\end{lemma}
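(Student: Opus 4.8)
The plan is to follow the defining constructions through the blow-up, fibre by fibre, and then to recognise the resulting morphism as a quotient by a normality argument.

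Over $\PP^1 \setminus \{0\}$ the blow-ups have no effect, so $\ol{\sS}$ and $\ol{\tT}$ restrict there to the trivial families $S \times (\PP^1 \setminus \{0\})$ and $T \times (\PP^1 \setminus \{0\})$, and $\q_{\ol{\sS}}$ restricts to $\q_S \times \id_{\PP^1}$ by construction; hence $\q_\sS|_t = \q_S$ for every $t \neq 0$, which is the first assertion. For the central fibre I would invoke the universal property of the blow-up. Since $C \times \{0\} = (\q_S \times \id_{\PP^1})^{-1}(D \times \{0\})$ scheme-theoretically, $\q_{\ol{\sS}}$ restricts on the exceptional divisor $\LL = \PP(\Oo_C \oplus L)$ to the morphism of exceptional divisors induced by the differential $d(\q_S \times \id_{\PP^1})$ along $C \times \{0\}$; this differential identifies the normal bundle $L \oplus \Oo_C$ of $C \times \{0\}$ in $S \times \PP^1$ — the $\Oo_C$-summand being the $\PP^1$-direction, on which $\q_S \times \id_{\PP^1}$ acts trivially — with $\q_C^*(W \oplus \Oo_D)$. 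By Lemma \ref{lm eta_Ss_0 = q_f_0} the normal bundle $W$ of $D$ in $T$ is exactly the descent of $L$ under the lift $\zeta_{f_0}$ of \eqref{eq definition of eta_C*} determined by $f_0 = [d\zeta_S]_{\Tt C}$ of \eqref{eq definition of f_0}, so the induced morphism $\LL = \PP(\Oo_C \oplus L) \to \PP(\Oo_D \oplus W) = \WW$ is the quotient map $\q_{f_0}$ of \eqref{eq definition of q_LL}. Restricting to $\Tot(L) = \LL \setminus \sigma_\infty \subset \sS_0$ then gives $\q_\sS|_0 = \q_{f_0}|_{\Tot(L)}$. (Alternatively, $\q_{\ol{\sS}}$ is invariant under the lift $\zeta_{\ol{\sS}}$ of $\zeta_S \times \id_{\PP^1}$, which restricts to $\zeta_{f_0}$ on $\LL$ by Lemma \ref{lm eta_sS at t=0}, so $\q_{\ol{\sS}}|_\LL$ and $\q_{f_0}$ are the same quotient onto $\WW$.)

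For the final clause I would argue globally on $\ol{\sS}$: the involution $\zeta_{\ol{\sS}}$ acts on the smooth variety $\ol{\sS}$, so $\ol{\sS}/\zeta_{\ol{\sS}}$ is normal, the $\zeta_{\ol{\sS}}$-invariant morphism $\q_{\ol{\sS}}$ factors through it, and the induced map $\ol{\sS}/\zeta_{\ol{\sS}} \to \ol{\tT}$ is finite (since $\q_{\ol{\sS}}$ is finite and $\ol{\sS} \to \ol{\sS}/\zeta_{\ol{\sS}}$ is finite surjective) and, by the first paragraph together with $S/\zeta_S = T$, an isomorphism over $\PP^1 \setminus \{0\}$, hence birational; a finite birational morphism onto the smooth variety $\ol{\tT} = \Blow_{D \times \{0\}}(T \times \PP^1)$ is an isomorphism. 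Thus $\q_{\ol{\sS}}$ is the quotient by $\zeta_{\ol{\sS}}$, and since the component $S \subset \ol{\sS}_0$ is $\zeta_{\ol{\sS}}$-invariant and maps onto $T \subset \ol{\tT}_0$ while $\LL$ maps onto $\WW$, one has $\sS = \q_{\ol{\sS}}^{-1}(\tT)$; restriction yields $\q_\sS : \sS \to \tT = \sS/\zeta_\sS$.

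The one point requiring care is the second paragraph: one must check that the identification of the exceptional divisor with $\PP(\Oo_C \oplus L)$ is compatible with the datum $f_0$ entering the definitions of $\zeta_{f_0}$ and $\q_{f_0}$, so that no spurious twist is introduced on the $\PP^1$-fibre direction — this is precisely the content of Lemma \ref{lm eta_Ss_0 = q_f_0} and \eqref{eq definition of f_0}. The finiteness and birationality bookkeeping in the last step, and the smoothness of $\ol{\tT}$, are otherwise routine.
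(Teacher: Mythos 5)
Your proof is correct and follows essentially the same route as the paper: both arguments reduce to the triviality of the family away from $t=0$ together with Lemmas \ref{lm eta_sS at t=0} and \ref{lm eta_Ss_0 = q_f_0} to identify the restriction of $\q_{\ol{\sS}}$ to the exceptional divisor $\LL$ with $\q_{f_0}$. Your extra care — spelling out the blow-up functoriality via $C\times\{0\}=(\q_S\times\id)^{-1}(D\times\{0\})$ and justifying the final quotient-map clause by a finite-plus-birational-onto-normal argument rather than just the fibrewise description — only adds rigor to what the paper asserts more briefly.
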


\begin{proof}
After Lemmas \ref{lm eta_sS at t=0} and \ref{lm eta_Ss_0 = q_f_0}, one has that the restriction of $\q_{\ol{\sS}}$ to $\LL$ coincides with $\q_{f_0}$, which is the quotient map associated to $\zeta_{f_0}$. This concludes the proof, as we knew already that $\q_\sS$ is the quotient map of $\zeta_\sS$ over the complement of the central fibre.  
\end{proof}

We construct $\dD$ as in \eqref{eq definition of Cc} starting from the smooth curve $D \subset T$. As defined in \eqref{eq definition of Bb}, denote the connected components of the corresponding relative Hilbert schemes containing the family of curves $n\dD$, once we restrict to $\tT$. Denote also $\q_\sS^*\{ n\dD \}_\tT \subset \{ n\cC \}_\sS$ the locus of curves lifted from $T$. S


Let us consider the restriction to $\q_\sS^{-1}\{ n\dD \}_\tT$ of the closure of the locus fixed by $\widehat{\zeta}_\sS$,
\[
\nN_{\sS/\AA^1, \hH}(\vV_a,n\dD) := \overline{\Fix(\widehat{\zeta}_\sS)} \cap \mathbf{h}^{-1}(\q_\sS^{-1}\{ n\dD \}_\tT).
\]
This provides a degeneration of the fixed locus of a natural involution on the moduli space of sheaves over $S$.

\begin{theorem} \label{tm degeneration of N}
Let $S$ be a smooth projective surface equipped with an involution $\zeta_S$ whose quotient $T$ is smooth, pick a smooth curve $D \subset T$ and consider its lift $C$ to $S$. Pick as well a positive integer $n$ and, associated to $n$ and $C$, a Mukai vector $\v_a$ as in \eqref{eq definition of v_a}. Then, there exists a closed subvariety $\nN_{\sS/\AA^1, \hH}(\vV_a,n\dD) \subset \mM_{\ol{\sS}/\AA^1, \hH}(\vV_a,n\cC)$, flat over $\AA^1$, and equipped with a surjective support morphism  
\begin{equation} \label{eq commuting diagram Nn_Y}
\xymatrix{
\nN_{\sS/\AA^1, \hH}(\vV_a,n\dD) \ar[rr] \ar[rd] & & \q^*_\sS \{ n\dD \}_\tT \ar[ld]
\\
& \AA^1, &
}
\end{equation}
such that the structural morphism is trivial over $\AA^1 - \{ 0 \}$, the generic fibre over $t \neq 0$ is the closure of the natural involution on $\M_{S,\H}(\v_a)$ associated to $\zeta_S$ equipped with the fibration \eqref{eq support morphism for N}, 
\[
\N_{S,\H}(\v_a,nD) \to \q_S^*\{nD\},
\]
while the central fibre at $t = 0$ is the fixed locus of the involution $\check{\zeta}_{f_0}$ given in \eqref{eq description of hat eta}, being $f_0$ induced from $d\zeta_S$ as in \eqref{eq definition of f_0}, endowed with the Hitchin morphism \eqref{eq Hitchin fibration for N},
\[
\Nn_C^L(n,d) \to \bigoplus_{i = 1}^n \q_C^*H^0(D, W^{\otimes i})
\]
where $d = a - \delta$.
\end{theorem}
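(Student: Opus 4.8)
The plan is to take $\nN_{\sS/\AA^1, \hH}(\vV_a,n\dD)$ to be, as in the statement, the closure $\overline{\Fix(\widehat{\zeta}_\sS)}$ intersected with $\mathbf{h}^{-1}(\q_\sS^*\{n\dD\}_\tT)$, and to deduce everything from the fibrewise descriptions already established plus the flatness supplied by Proposition \ref{pr flatness}. First I would observe that if $\widehat{\zeta}_\sS$ fixes the class of a sheaf $\Ff$, then its fitting support must be $\zeta_\sS$-invariant; hence $\mathbf{h}(\Fix(\widehat{\zeta}_\sS))$, and therefore $\mathbf{h}(\overline{\Fix(\widehat{\zeta}_\sS)})$, is contained in the closed locus of $\zeta_\sS$-invariant curves of the relative Hilbert scheme. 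Since $\q_\sS^*\{n\dD\}_\tT$ is a union of connected components of that locus, $\mathbf{h}^{-1}(\q_\sS^*\{n\dD\}_\tT)$ is open and closed in $\overline{\Fix(\widehat{\zeta}_\sS)}$. By Proposition \ref{pr flatness}, $\overline{\Fix(\widehat{\zeta}_\sS)}$ is flat over $\AA^1$, and a clopen subscheme of a scheme flat over the smooth affine curve $\AA^1$ is again flat; so $\nN_{\sS/\AA^1, \hH}(\vV_a,n\dD)$ is flat over $\AA^1$, and restricting $\mathbf{h}$ yields the surjective support morphism making \eqref{eq commuting diagram Nn_Y} commute. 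Triviality over $\AA^1 \setminus \{0\}$ is immediate, since there $\sS$ is the product $S \times (\AA^1 \setminus \{0\})$ and $\zeta_\sS$, $\hH$, $\vV_a$ are all pulled back from $S$.

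Next I would identify the generic fibre. Over $t \neq 0$, Theorem \ref{tm Mm_S} gives $\mM_{\sS/\AA^1, \hH}(\vV_a,n\cC)|_t = \M_{S,\H}(\v_a,nC)$ with $\mathbf{h}|_t = \h_S$, Lemma \ref{lm eta_sS at t=0} gives $\widehat{\zeta}_\sS|_t = \widehat{\zeta}_S$, and $\q_\sS^*\{n\dD\}_\tT$ restricts there to $\q_S^*\{nD\}$. Because $\q_S^*\{nD\}$ is clopen in the $\zeta_S$-invariant-curve locus over which $\N_{S,\H}(\v_a) = \overline{\Fix(\widehat{\zeta}_S)}$ lies, intersecting with $\h_S^{-1}(\q_S^*\{nD\})$ commutes with taking closures, so the fibre of $\nN_{\sS/\AA^1, \hH}(\vV_a,n\dD)$ over $t \neq 0$ is $\N_{S,\H}(\v_a) \cap \h_S^{-1}(\q_S^*\{nD\}) = \N_{S,\H}(\v_a,nD)$ of \eqref{eq N over B}, carrying the fibration \eqref{eq support morphism for N} over $\q_S^*\{nD\}$.

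Then I would treat the central fibre. By Theorem \ref{tm Mm_S}, $\mM_{\sS/\AA^1, \hH}(\vV_a,n\cC)|_0 = \Mm^L_C(n,d)$ with $d = a - \delta$ and $L = \Oo_S(C)|_C$ the normal bundle of \eqref{eq setting L}, and $\mathbf{h}|_0$ the Hitchin morphism. By Lemma \ref{lm eta_sS at t=0}, $\zeta_\sS|_0$ is the restriction of $\zeta_{f_0}$ to $\Tot(L)$ with $f_0$ as in \eqref{eq definition of f_0}; hence, via the spectral correspondence, Lemma \ref{lm involutions and spectral correspondence} identifies $\widehat{\zeta}_\sS|_0$ with the biregular involution $\check{\zeta}_{f_0}$ of \eqref{eq description of hat eta}, whose fixed locus $\Nn_C^L(n,d)$ is closed in $\Mm^L_C(n,d)$. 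Since $\widehat{\zeta}_\sS$ is biregular in a neighbourhood of $t = 0$, the closure $\overline{\Fix(\widehat{\zeta}_\sS)}$ agrees with $\Fix(\widehat{\zeta}_\sS)$ there, so its central fibre is exactly $\Nn_C^L(n,d)$ --- this being the content of Proposition \ref{pr flatness}. Finally, $\q_\sS^*\{n\dD\}_\tT|_0$ is the sublocus of the Hitchin base $\bigoplus_{i=1}^n H^0(C, L^{\otimes i})$ given by spectral curves pulled back from $\Tot(W)$, namely $\bigoplus_{i=1}^n \q_C^*H^0(D,W^{\otimes i})$ (using $L \cong \q_C^*W$ from Lemma \ref{lm eta_Ss_0 = q_f_0}); intersecting $\Nn_C^L(n,d)$ with the preimage of this sublocus under $\mathbf{h}|_0$ and invoking Lemma \ref{lm Nn supported on vector space} identifies the restricted support morphism with the Hitchin fibration \eqref{eq Hitchin fibration for N}, $\Nn_C^L(n,d) \to \bigoplus_{i=1}^n \q_C^*H^0(D,W^{\otimes i})$.

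The only genuinely delicate point is the identification of the central fibre of $\overline{\Fix(\widehat{\zeta}_\sS)}$ with the \emph{whole} fixed locus $\Nn_C^L(n,d)$, rather than a proper closed subvariety of it: a priori the flat limit of the fixed loci of the generic fibres could be smaller. This is precisely what Proposition \ref{pr flatness} settles, using that $\widehat{\zeta}_\sS$ is biregular near the central fibre, so its fixed locus is already closed there and hence coincides with its own closure. Everything else --- that intersecting with a connected component of the base of the relative Hilbert scheme commutes with closure, and matching the pulled-back-curve loci fibrewise --- is routine bookkeeping.
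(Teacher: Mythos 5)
Your proposal is correct and follows essentially the same route as the paper's proof: define $\nN_{\sS/\AA^1,\hH}(\vV_a,n\dD)$ as the closure of $\Fix(\widehat{\zeta}_\sS)$ intersected with the preimage of $\q_\sS^*\{n\dD\}_\tT$, identify the fibres via Lemma \ref{lm eta_sS at t=0} together with Lemmas \ref{lm Nn supported on vector space} and \ref{lm eta_Ss_0 = q_f_0}, and invoke Proposition \ref{pr flatness} for flatness. Your explicit discussion of why the central fibre is the whole fixed locus (biregularity at $t=0$) just makes precise the step the paper delegates to Proposition \ref{pr flatness}.
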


\begin{proof}
After Lemma \ref{lm eta_sS at t=0}, the fibrewise description is a straight-forward since the restriction of $\widehat{\zeta}_\sS$ to a generic fibre over $t \neq 0$ coincides with $\zeta_S$, while, at the central fibre over $t = 0$, $\widehat{\zeta}_\sS$ restricts to $\check{\zeta}_{f_0}$. Then, the description of the generic fibres is provided by the definitions \eqref{eq N over B} and \eqref{eq support morphism for N}, while the description of the central fibre follows from Lemmas \ref{lm Nn supported on vector space} and \ref{lm eta_Ss_0 = q_f_0}. Finally, recall Proposition \ref{pr flatness} for the proof of flatness.
\end{proof}

For $T$, consider $\mM_{\tT/\AA^1, \iI}(\wW_b, n\dD)$ of sheaves with a relative Mukai vector $\wW_b$ and a polarization $\I$ inducing $\iI$ and $\widetilde{\iI} = \q_\sS^* \iI$. In this case, $\widehat{\zeta}_\sS$ is biregular and the image of the pull-back morphism under the corresponding quotient map lies in its fixed locus,
\begin{equation} \label{eq definition of hatqQ}
\morph{\mM_{\tT/\AA^1. \iI}(\wW_b, n\dD)}{\nN_{\sS/\AA^1, \widetilde{\iI}}(\vV_{2b}, n\dD) \subset \mM_{\sS/\AA^1, \widetilde{\iI}}(\vV_{2b}, n\cC)}{\Ff}{\q_\sS^*\Ff.}{}{\widehat{\q}_\sS}
\end{equation}
Note that over the generic fibre, $t \neq 0$, \eqref{eq definition of hatqQ} restricts to \eqref{eq definition of hatq}, while on the central fibre, $t = 0$, the restriction of \eqref{eq definition of hatqQ} is \eqref{eq definition of hatq HB}. Both cases are covered by Proposition \ref{pr q embedding on ssl} which provides the following corollary.

\begin{corollary} \label{co qQ on ssl}
Hence, \eqref{eq definition of hatqQ} is generically 1:1 when the ramification locus of $\q_S$ intersects $C$ non-trivially, and, otherwise (in particular when $\zeta_S$ is unramified), generically $2:1$.
\end{corollary}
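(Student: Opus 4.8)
The plan is to reduce the statement to a fibrewise application of Proposition \ref{pr q embedding on ssl}. By construction the morphism $\widehat{\q}_\sS$ of \eqref{eq definition of hatqQ} commutes with the structural morphisms to $\AA^1$, and both its source $\mM_{\tT/\AA^1, \iI}(\wW_b, n\dD)$ and its target $\nN_{\sS/\AA^1, \widetilde{\iI}}(\vV_{2b}, n\dD)$ are flat over $\AA^1$ (the target by Theorem \ref{tm degeneration of N} and Proposition \ref{pr flatness}, the source by the same construction applied over $T$). Over $\AA^1 \setminus \{0\}$ both families trivialize, and by Lemma \ref{lm q_sS at t=0} together with the identification of the generic fibre of the Donagi--Ein--Lazarsfeld family the restriction of $\widehat{\q}_\sS$ to a fibre over $t \neq 0$ is precisely the morphism $\widehat{\q}_S$ of \eqref{eq definition of hatq}, while its restriction to the central fibre over $t = 0$ is the morphism $\widehat{\q}_C$ of \eqref{eq definition of hatq HB}. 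Since ``generically'' refers to a dense open subset and $\AA^1 \setminus \{0\}$ is dense in $\AA^1$, it is enough to determine the generic degree of $\widehat{\q}_S$, and Proposition \ref{pr q embedding on ssl} applies once we identify the ramification behaviour of the covering $\q_{\Aa^\ssl}$ of \eqref{eq definition of q_Aa}.

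Concretely, I would argue as follows. If the ramification locus $R$ of $\q_S$ meets $C$ non-trivially, then a generic curve $A' \in \q_S^*\{nD\}$ satisfies $A' \cdot R = n\,(C \cdot R) > 0$, so $\Delta$ meets $A'$ and, by the discussion preceding Proposition \ref{pr q embedding on ssl}, the covering $\q_{\Aa^\ssl}$ is ramified; the first case of that proposition then yields that over $\{nD\}^\ssl$ the morphism $\widehat{\q}_{\Aa^\ssl}$ is an isomorphism onto its image, hence $\widehat{\q}_S$ — and therefore $\widehat{\q}_\sS$ — is generically $1{:}1$. If instead $R$ is disjoint from $C$ (in particular if $\zeta_S$ acts without fixed points, so that $\q_C : C \to D$ is an \'etale double cover), then $\q_{\Aa^\ssl}$ is an unramified $2{:}1$ cover associated to a relative $2$-torsion line bundle, and the second case of Proposition \ref{pr q embedding on ssl} shows that $\widehat{\q}_{\Aa^\ssl}$ factors through the $\ZZ_2$-quotient by tensorisation with that line bundle and is generically $2{:}1$; the same then holds for $\widehat{\q}_\sS$.

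The one point requiring care is the passage from ``generic over a single fibre $t \neq 0$'' to ``generic over the total family'': I would check that, over the locus of curves that are smooth with smooth lift, the source $\mM_{\tT/\AA^1, \iI}(\wW_b, n\dD)$ is irreducible — which it is, being a relative Jacobian fibration over an irreducible base (a relative linear-system bundle over $\AA^1$ whose central fibre is the Hitchin base $\bigoplus_{i=1}^n H^0(D, W^{\otimes i})$) — so that its generic point lies over the generic point of $\AA^1$ and the generic degree of $\widehat{\q}_\sS$ can be read off from $\widehat{\q}_S$. As a consistency check one also verifies, via Lemma \ref{lm eta_Ss_0 = q_f_0}, that at $t = 0$ the spectral curves in $\Tot(L)$ cover those in $\Tot(W)$ with ramification governed by $\q_C$, so that the central morphism $\widehat{\q}_C$ falls under the same case of Proposition \ref{pr q embedding on ssl}; in particular the generic degree does not jump at $t = 0$. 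The main obstacle is therefore bookkeeping and the irreducibility verification rather than any new argument, Proposition \ref{pr q embedding on ssl} doing all the real work fibrewise.
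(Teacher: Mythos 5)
Your proposal is correct and follows essentially the same route as the paper: the paper's own justification is exactly the fibrewise reduction, noting that \eqref{eq definition of hatqQ} restricts to \eqref{eq definition of hatq} over $t \neq 0$ and to \eqref{eq definition of hatq HB} at $t = 0$, with both cases then handled by Proposition \ref{pr q embedding on ssl}. Your additional bookkeeping (translating the hypothesis on the ramification locus meeting $C$ into ramification of $\q_{\Aa^\ssl}$, and the irreducibility check ensuring the generic degree is read off from the fibres over $t \neq 0$) merely fills in details the paper leaves implicit.
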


As we did in the case of a natural involution, consider
\[
\pP_{\sS/\AA^1, \hH}(\vV_a,n\dD) := \overline{\Fix(\lambda_{\jJ,\sS})} \cap \mathbf{h}^{-1}(\q_\sS^{-1}\{ n\dD \}_\tT),
\]
which gives a degeneration of the fixed locus of $\lambda_{J, S}$.

\begin{theorem} \label{tm degeneration of P}
Let $S$ be a smooth projective surface equipped with an involution $\zeta_S$ whose quotient $T$ is smooth, pick a smooth curve $D \subset T$ and consider its lift $C$ to $S$. Choose a positive integer $n$ and take a Mukai vector $\v_a$ as in \eqref{eq definition of v_a}
and a line bundle $J$ on $S$ satisfying \eqref{eq condition for J and v}. 

Then, there exists a closed subvariety  $\pP_{\sS/\AA^1, \hH}(\vV_a, \jJ, n\dD) \subset \mM_{\sS,\hH}(\vV_a,n\cC)$, flat over $\AA^1$, and equipped with a surjective support morphism  
\begin{equation} \label{eq commuting diagram Pp_Y}
\xymatrix{
\pP_{\sS/\AA^1, \hH}(\vV_a, \jJ,n\dD) \ar[rr] \ar[rd] & & \q^*_\sS \{ n\dD \}_\tT \ar[ld]
\\
& \AA^1, &
}
\end{equation}
such that the structural morphism is trivial over $\AA^1 - \{ 0 \}$, the generic fibre over $t \neq 0$ is the closure of the involution $\lambda_{J,S}$ on $\M_{S,\H}(\v_a)$ equipped with the fibration \eqref{eq support morphism for P}, 
\[
\P_{S,\H}(\v_a, J,nD) \to q_S^*\{ nD \},
\]
while the central fibre at $t = 0$ is the fixed locus of the involution $\check{\lambda}_{J_C, f_0}$ given in \eqref{eq lambda}, being $f_0$ induced from $d\zeta_S$ as in \eqref{eq definition of f_0} and $J_C$ the restriction of $J$ to $C$, endowed with the Hitchin morphism \eqref{eq Hitchin fibration for P},
\[
\Pp^L_C(n,d, J_C) \to \bigoplus_{i = 1}^n \q_C^*H^0(D, W^{\otimes i}).
\]
where $d = a - \delta$.
\end{theorem}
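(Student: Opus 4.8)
The plan is to mirror, essentially verbatim, the proof of Theorem~\ref{tm degeneration of N}, with the natural involution $\widehat{\zeta}_\sS$ replaced by the composed involution $\lambda_{\jJ,\sS} = \widehat{\zeta}_\sS \circ \xi_{\jJ,\sS}$. First I would record that $\jJ = p_S^*J$ is $\zeta_\sS$-invariant, since $J$ is $\zeta_S$-invariant and $\zeta_\sS$ commutes with the structural morphism to $\AA^1$; consequently $\widehat{\zeta}_\sS$ and $\xi_{\jJ,\sS}$ commute, so $\lambda_{\jJ,\sS}$ is a well-defined birational involution of $\mM_{\sS/\AA^1, \hH}(\vV_a,n\cC)$ over $\AA^1$, and $\pP_{\sS/\AA^1, \hH}(\vV_a, \jJ, n\dD) := \overline{\Fix(\lambda_{\jJ,\sS})} \cap \mathbf{h}^{-1}(\q_\sS^{-1}\{ n\dD \}_\tT)$ is a closed subvariety whose support morphism, obtained by restricting $\mathbf{h}$, fits into the commuting diagram \eqref{eq commuting diagram Pp_Y}.

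Next I would identify the fibres. Over $t \neq 0$ the family $\ol{\sS} \to \PP^1$ trivializes, $\zeta_\sS$ restricts to $\zeta_S$ and $\jJ|_t \cong J$; since condition \eqref{eq condition for J and v} holds fibrewise, Proposition-definition~\ref{prop-def eta} gives $\xi_{\jJ,\sS}|_t = \xi_{J,S}$, hence $\lambda_{\jJ,\sS}|_t = \lambda_{J,S}$, and the definitions \eqref{eq P over B} and \eqref{eq support morphism for P} identify the generic fibre with $\P_{S,\H}(\v_a, J,nD) \to \q_S^*\{nD\}$. At $t = 0$, Lemma~\ref{lm fibrewise description of lambda} says $\lambda_{\jJ,\sS}$ restricts to $\check{\lambda}_{J_C, f_0}$ (with $f_0$ the isomorphism \eqref{eq definition of f_0} induced by $d\zeta_S$ and $J_C = J|_C$), so the central fibre is $\Fix(\check{\lambda}_{J_C, f_0}) = \Pp^L_C(n,d,J_C)$ with $d = a - \delta$; Lemmas~\ref{lm Pp supported on vector space} and \ref{lm eta_Ss_0 = q_f_0} then present the restriction of the support morphism as the Hitchin fibration \eqref{eq Hitchin fibration for P} onto $\bigoplus_{i=1}^n \q_C^*H^0(D, W^{\otimes i})$.

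Flatness over $\AA^1$ is then Proposition~\ref{pr flatness}: the total space $\mM_{\sS/\AA^1, \hH}(\vV_a,n\cC)$ is flat over the smooth one-dimensional $\AA^1$ and trivializes over $\AA^1 \setminus \{0\}$, so one only has to check that the central fibre of $\pP_{\sS/\AA^1, \hH}(\vV_a, \jJ, n\dD)$ equals the closure of $\overline{\Fix(\lambda_{\jJ,\sS})}|_{\AA^1 \setminus \{0\}}$, which holds because $\lambda_{\jJ,\sS}$ is biregular at the central fibre.

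The only genuinely non-formal point — the main obstacle — is the behaviour of $\xi_{\jJ,\sS}$ (and hence of $\lambda_{\jJ,\sS}$) at $t = 0$: one needs it to be biregular there and to restrict, on $\Tot(L) \subset \ol{\sS}_0$, to $\check{\xi}_{J_C,L}$, i.e. the relative Grothendieck--Verdier computation of Proposition-definition~\ref{prop-def eta} must survive the degeneration even though $\ol{\sS}_0$ is reducible. This is precisely why one works with $\Ee \! xt^2_{\ol{\sS}/\PP^1}(\,\cdot\,, \jJ(-\LL))$ and why the open locus $\mM_{\sS/\AA^1, \hH}(\vV_a,n\cC)$, which at $t = 0$ discards sheaves of rank $\neq 1$ on a component of their support, is the right ambient space. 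With those facts (all established in the cited results) the theorem is the assembly carried out above.
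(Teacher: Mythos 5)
Your proposal is correct and follows essentially the same route as the paper, which simply declares the proof analogous to that of Theorem \ref{tm degeneration of N} and cites Lemmas \ref{lm Pp supported on vector space}, \ref{lm eta_sS at t=0} and \ref{lm eta_Ss_0 = q_f_0} together with Propositions \ref{prop-def eta} and \ref{pr flatness}. Your write-up just spells out the fibrewise identifications and the flatness check that the paper leaves implicit.
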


\begin{proof}
The proof is analogous to the proof of Theorem \ref{tm degeneration of N} and follows naturally from Lemmas \ref{lm Pp supported on vector space}, \ref{lm eta_sS at t=0} and \ref{lm eta_Ss_0 = q_f_0} and Propositions \ref{prop-def eta} and \ref{pr flatness}.
\end{proof}

\subsection{Degenerations of natural Lagrangians and Prymian integrable systems}

\label{sc degeneration of ASF systems}

Sawon conjectured in \cite{sawon} that the Prymian integrable systems constructed by Markushevich--Tikho\-mi\-rov \cite{markushevich&tikhomirov}, Arbarello--Sacc\`a--Ferretti \cite{ASF} and Matteini\cite{matteini}, degenerate into integrable systems related to the Hitchin system, leaving open the description of these conjectural systems· 

In this section we provide an explicit description of the degenerations of Section \ref{sc degenerating involutions} in the cases studied in Section \ref{sc Prymian integrable systems}, focusing on the cases specified in \cite{markushevich&tikhomirov,ASF,matteini, sawon&shen2, shen}, which are constructed using primitive first Chern classes. We also review the degeneration given by Sawon--Shen \cite{sawon&shen} into the $\Sp(2m,\CC)$-Higgs moduli space, studying as well the degeneration of the associated natural Lagrangian. It is worth noticing that we find that this natural Lagrangian degenerates into the fixed locus of an involution associated to $\U(m,m)$-Higgs bundles, whose Nadler--Langlads group is $\Sp(2m,\CC)$.

Our work provides an answer to the question posed by Sawon in \cite{sawon} (which, strictly speaking, refers only to the case of primitive first Chern classes) and to the reformulation of Sawon's question on the non-primitve case.

\subsubsection{The general case}

Consider a K3 surface $X$ equipped with an antisymplectic involution $\zeta_X^- : X \to X$, whose quotient $\q_X : X \to Y := X /\zeta_X^-$ has ramification divisor $\Delta_X$. Let $D \subset Y$ be a smooth curve and $C$ as in \eqref{eq C from D}, again smooth. We denoted by $\zeta_C: C \to C$ and $\q_C : C \to D=C/\zeta_C$ the restriction of $\zeta_X^-$ and $\q_X$ to $C$. Note that the ramification divisor of $\zeta_C$ is $\Delta_C = C \cap \Delta_X$. Since $K_X$ is trivial, the normal bundle of $C$ is
\[
L = K_C,
\]
giving the ruled surface $\KK_C$, and the normal bundle of $D$ is 
\begin{equation} \label{eq description of W}
W = K_D K_Y^{-1}|_D,
\end{equation}
associated to the ruled surface $\WW$. Observe that $K_Y^{-1}|_D$ is a square-root of the branching divisor of $\q_C$, and its pull-back is the ramification divisor.

Recall from Proposition \ref{pr restriction of antisymplectic involutions give eta_KK}, that $d \zeta_X^- : (\zeta_X^-)^* \Tt X \to \Tt X$ gives the isomorphism $-\partial_C: \zeta_C^*K_C \stackrel{\cong}{\to} K_C$ giving the associated involution $\zeta_\KK$ described in \eqref{eq eta_KK minus}. The associated natural antisymplectic involution $\widehat{\zeta}_\KK^-$ is sent to $\check{\zeta}_K^-$ under the spectral correspondence as described in \eqref{eq eta K}. As a consequence of Proposition \ref{pr restriction of antisymplectic involutions give eta_KK}, Theorems \ref{tm relative symplectic form} and \ref{tm degeneration of N} and Corollary \ref{co qQ on ssl} one has the following.

\begin{corollary} \label{co degeneration of N^-}
There exists a non-linear degeneration $\nN_{\xX,\hH}(\vV_a, n\dD)$ of the natural Lagrangian subvariety $\N^{\H}_X(\v_a, nD) \subset \M_{X,\H}(\v_a, nC)$ obtained from the fixed locus of $\zeta_X^-$, into the Lagangian subvariety of the Higgs moduli space $\Nn_C(n, d) \subset \Mm_C(n, d)$, with $d = a - n(n-1)(g_C - 1)$, given by the fixed locus of the involution 
\begin{equation} \label{eq zeta K minus}
\morph{\Mm_C(n,d)}{\Mm_C(n,d)}{(E,\varphi)}{(\zeta_C^*E, -\zeta_C^*\varphi).}{}{\check{\zeta}_K^-}
\end{equation}

For $a = 2b$ and a $\zeta_X$-invariant polarization $\widetilde{\I} = \q_X^*\I$, the morphism 
\[
\widehat{\q}_\xX : \mM_{\yY/\AA^1}^\iI(\wW_b, n\dD) \longrightarrow   \nN_{\xX/\AA^1}^{\widetilde{\iI}}(\vV_{2b}, n\dD) \subset \mM_{\xX/\AA^1}^{\widetilde{\iI}}(\vV_{2b}, n\cC),
\]
given in \eqref{eq definition of hatqQ}, is generically 1:1 when the ramification locus of $\q_S$ intersects $C$ non-trivially, and generically $2:1$ otherwise, for instance whenever $\zeta_S$ is unramified. The generic fibres of $\mM_{\yY/\AA^1}^\iI(\wW_b, n\dD)$ are the moduli spaces $\M_{Y,\I}(\w_b, nD)$ and the central fibre is $\Mm_D^{W}(n,d')$, for $W$ described in \eqref{eq description of W} and $d' = b - n(n-1)(g_D - 1)$.
\end{corollary}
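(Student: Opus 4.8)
The strategy is to specialize the general machinery already assembled in Sections \ref{sc degenerating involutions} and \ref{sc Prymian integrable systems} to the setting $S = X$ a K3 surface with antisymplectic involution $\zeta_X^-$, $T = Y$ the quotient, and to read off the explicit descriptions from the earlier lemmas. First I would invoke Proposition \ref{pr restriction of antisymplectic involutions give eta_KK}: since $\zeta_X^-$ is antisymplectic and $K_X \cong \Oo_X$, adjunction gives $L = \Oo_X(C)|_C = K_C$, the induced isomorphism $f_0 = [d\zeta_X^-]_{\Tt C}$ equals $-\partial\zeta_C$, and hence $\zeta_{f_0} = \zeta_\KK^-$ as defined in \eqref{eq eta_KK minus}. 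This identifies the ruled surface appearing at the central fibre as $\KK_C = \PP(\Oo_C \oplus K_C)$ and pins down the degenerate involution.

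Next I would apply Theorem \ref{tm degeneration of N} with $\zeta_S = \zeta_X^-$. This directly produces the flat family $\nN_{\xX/\AA^1,\hH}(\vV_a, n\dD) \subset \mM_{\sS/\AA^1,\hH}(\vV_a, n\cC)$, trivial over $\AA^1 - \{0\}$, whose generic fibre is $\N_{X,\H}^-(\v_a, nD) \to \q_X^*\{nD\}$ — which by Proposition \ref{pr N is Lagrangian} is Lagrangian in $\M_{X,\H}(\v_a, nC)$ — and whose central fibre is $\Fix(\check{\zeta}_{f_0}) = \Nn_C^L(n,d)$ with $L = K_C$, equipped with the Hitchin fibration \eqref{eq Hitchin fibration for N}. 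Substituting $L = K_C$ into the description \eqref{eq description of hat eta} of $\check{\zeta}_f$ (together with $f_0 = -\partial\zeta_C$ and the abbreviation convention recorded in \eqref{eq eta K}) yields precisely the involution $\check{\zeta}_K^-\colon (E,\varphi) \mapsto (\zeta_C^*E, -\zeta_C^*\varphi)$ displayed in \eqref{eq zeta K minus}, and the Corollary in Section \ref{sc involutions and Higgs bundles} guarantees that its fixed locus $\Nn_C(n,d)$ is Lagrangian in $\Mm_C(n,d)$. The relation $d = a - \delta = a - n(n-1)(g_C-1)$ follows from \eqref{eq value of a} and \eqref{eq definition of delta} once one notes $\ell = \deg K_C = 2g_C - 2$ by \eqref{eq equality of self intersections}. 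That $\nN_{\xX/\AA^1,\hH}(\vV_a, n\dD)$ is a degeneration of Lagrangians (not merely a flat family) is where Theorem \ref{tm relative symplectic form} enters: the relative symplectic form $\varOmega$ on $\mM_{\sS/\AA^1,\hH}(\vV_a, n\cC)$ restricts on generic fibres to the Mukai form and on the central fibre to the canonical Higgs symplectic form $\Omega_0$, so isotropy (hence Lagrangianity, by the half-dimensionality from Proposition \ref{pr N is Lagrangian}) is preserved across the specialization.

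For the second half of the statement I would apply Theorem \ref{tm degeneration of N} in the relative-quotient form, i.e. the morphism $\widehat{\q}_\sS$ of \eqref{eq definition of hatqQ} with $a = 2b$ and $\widetilde{\iI} = \q_X^*\I$. By construction $\widehat{\q}_\sS$ restricts over $t \neq 0$ to $\widehat{\q}_X$ of \eqref{eq definition of hatq} and over $t = 0$ to $\widehat{\q}_C$ of \eqref{eq definition of hatq HB}. Corollary \ref{co qQ on ssl} then gives the dichotomy: $\widehat{\q}_\sS$ is generically $1:1$ when $\Delta_X \cap C \neq \emptyset$ (the ramified case, via Kempf descent and the ramification of $\q_{\Aa^\ssl}$) and generically $2:1$ otherwise (the case when $\q_{\Aa^\ssl}$ is an étale double cover, where $\widehat{\q}_C$ factors through the $\ZZ_2$-action by the associated $2$-torsion line bundle). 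The description of the fibres of $\mM_{\yY/\AA^1}^\iI(\wW_b, n\dD)$ — generic fibre $\M_{Y,\I}(\w_b, nD)$, central fibre $\Mm_D^W(n, d')$ with $W = K_D K_Y^{-1}|_D$ as in \eqref{eq description of W} — is immediate from Theorem \ref{tm Mm_S} applied to the surface $Y$ with curve $D$, whose normal bundle is computed by adjunction on $Y$; and $d' = b - n(n-1)(g_D - 1)$ follows as before since $\deg W = \deg(K_D K_Y^{-1}|_D)$ and the relevant self-intersection identity. The only genuinely delicate point — but it has already been handled upstream — is Corollary \ref{co qQ on ssl}/Proposition \ref{pr q embedding on ssl}: verifying that the ramified-versus-étale trichotomy for $\q_C$ and for the spectral-curve cover $\q_{\Aa^\ssl}$ are governed by the same condition ($\Delta_X \cap C \neq \emptyset$), which is why the $2:1$ phenomenon occurs exactly in the unramified case. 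Assembling these citations, together with flatness from Proposition \ref{pr flatness}, completes the proof.
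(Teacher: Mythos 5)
Your proposal is correct and follows essentially the same route as the paper, which derives this corollary directly from Proposition \ref{pr restriction of antisymplectic involutions give eta_KK}, Theorems \ref{tm relative symplectic form} and \ref{tm degeneration of N}, and Corollary \ref{co qQ on ssl}; you simply spell out the specializations (identifying $L = K_C$, $f_0 = -\partial\zeta_C$, and the degree bookkeeping $\delta = n(n-1)(g_C-1)$) that the paper leaves implicit. No gaps.
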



As a consequence of Proposition \ref{pr restriction of antisymplectic involutions give eta_KK} and Theorems \ref{tm relative symplectic form} and \ref{tm degeneration of P} one has the following.

\begin{corollary} \label{co degeneration for P^-}
Suppose that \eqref{eq condition for J and v} holds for the choice of a line bundle $J$ and a Mukai vector $\v_a$ as in \eqref{eq definition of v_a} for some integer $a$ and let $d$ be $a - \delta= a - n(n-1)(g_C - 1)$. 

There exists a non-linear degeneration $\pP_{\xX,\hH}(\vV_a, \jJ, n\dD)$ of the Prymian integrable system $\P_{X,\H}(\v_a, J, nD) \subset \M_{X,\H}(\v_a, nC)$ constructed out of $\zeta_X^-$, into the Prymian integrable system $\Pp_C(n, d, J_C) \subset \Mm_C(n, d)$ given by the fixed locus of the involution 
\begin{equation} \label{eq lambda K minus}
\morph{\Mm_C(n,d)}{\Mm_C(n,d)}{(E,\varphi)}{(\zeta_C^*E^*, -\zeta_C^*\varphi).}{}{\check{\lambda}_{J_C,K}^-}
\end{equation}
\end{corollary}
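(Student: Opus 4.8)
The plan is to assemble Corollary \ref{co degeneration for P^-} directly from the machinery that has already been set up, with essentially no new computation. First I would invoke Proposition \ref{pr restriction of antisymplectic involutions give eta_KK}: since $X$ is a K3 surface, its canonical bundle is trivial, so by adjunction the normal bundle of the $\zeta_X^-$-invariant curve $C$ is $L = K_C$, and the induced involution $\zeta_{f_0}$ on the ruled surface $\LL = \KK_C$ is exactly $\zeta_\KK^-$ from \eqref{eq eta_KK minus}, because $d\zeta_X^-$ has eigenvalues with product $-1$, forcing $f_0 = -\partial\zeta_C$. Next I would apply Theorem \ref{tm degeneration of P} to the surface $S = X$, the involution $\zeta_S = \zeta_X^-$ (whose quotient $Y$ is smooth by hypothesis), the curve $D \subset Y$ with lift $C$, the chosen $n$, Mukai vector $\v_a$ and line bundle $J$ satisfying \eqref{eq condition for J and v}. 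This produces the flat family $\pP_{\sS/\AA^1,\hH}(\vV_a,\jJ,n\dD)$ over $\AA^1$ whose generic fibre at $t \neq 0$ is the closure of $\Fix(\lambda_{J,X})$, i.e. the Prymian integrable system $\P_{X,\H}(\v_a,J,nD)$ — which, by Theorem \ref{tm Prymian integrable systems}, genuinely carries the structure of an integrable system fibred by Prym varieties — and whose central fibre at $t=0$ is $\Fix(\check{\lambda}_{J_C,f_0})$ endowed with its Hitchin fibration, as in Lemma \ref{lm Pp supported on vector space}.

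The remaining point is to identify the central-fibre involution $\check{\lambda}_{J_C,f_0}$ concretely with \eqref{eq lambda K minus}. Here I would substitute $L = K_C$ and $f_0 = -\partial\zeta_C$ into the formula \eqref{eq lambda} from Lemma \ref{lm involutions and spectral correspondence 2}, which describes $\check{\lambda}_{J_C,f}$ as $(E,\varphi) \mapsto (\zeta_C^*E^* \otimes J_C L, (\id \otimes f)^t \circ \zeta_C^*\varphi^t)$; with the abbreviation convention recalled just before \eqref{eq eta K} (absorbing $\id_{\zeta_C^*E}\otimes\partial\zeta_C$ into the notation $\zeta_C^*\varphi$) and with the Mukai vector/line bundle pair chosen so that $J_C L = \Oo_C$ (the analogue of Remark \ref{rm choice of J_0 and v_0}, i.e. $\v_\delta$ together with $J = \Oo_\LL(-\sigma_0)$, giving $J_C = K_C^{-1}$ on $C$), this collapses to exactly $(E,\varphi)\mapsto(\zeta_C^*E^*, -\zeta_C^*\varphi)$, which is \eqref{eq lambda K minus}. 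I should state clearly which normalization of $J$ is being used so that the displayed form matches; for a general $J_C$ the target is twisted by $J_C K_C$ as in \eqref{eq lambda K}, and the corollary is meant in the normalized case. Finally, to record that the limit object is again a Prymian integrable system $\Pp_C(n,d,J_C) \subset \Mm_C(n,d)$ with $d = a - \delta = a - n(n-1)(g_C-1)$, I would note that $\ell = \deg K_C = 2g_C-2$ by \eqref{eq equality of self intersections}, so $\delta = \tfrac12(n^2-n)(2g_C-2) = n(n-1)(g_C-1)$, and that the fibres of the Hitchin restriction are Prym varieties of the double covers $\q_C: C \to D$ by the discussion around Remark \ref{rm smooth fibres are Prym varieties}.

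The main obstacle — though it is bookkeeping rather than a genuine difficulty — is the careful matching of normalizations: one must check that the pair $(\v_a, J)$ can actually be chosen so that the central-fibre involution reduces to the clean form \eqref{eq lambda K minus} rather than to the $J_C K_C$-twisted version \eqref{eq lambda K}, and that the degree index $d$ on the Higgs side is the one claimed. This requires tracking the degree shift $a \mapsto d = a - \delta$ through \eqref{eq value of a}, \eqref{eq definition of delta} and \eqref{eq equality of self intersections}, and confirming that $\lambda_{\jJ,\sS}$ restricts fibrewise to $\lambda_{J,X}$ at $t \neq 0$ and to $\check{\lambda}_{J_C,f_0}$ at $t = 0$ — but this is precisely the content of Lemma \ref{lm fibrewise description of lambda}, so once that lemma and Proposition-definition \ref{prop-def eta} are cited, the corollary follows formally. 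Hence the proof is simply: apply Proposition \ref{pr restriction of antisymplectic involutions give eta_KK} to identify $\zeta_{f_0} = \zeta_\KK^-$ and $L = K_C$; apply Theorems \ref{tm relative symplectic form} and \ref{tm degeneration of P}; and unwind the spectral-correspondence description from Lemma \ref{lm involutions and spectral correspondence 2} in the normalized case to obtain \eqref{eq lambda K minus}.
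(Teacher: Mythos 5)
Your proposal matches the paper's argument exactly: the corollary is stated there as an immediate consequence of Proposition \ref{pr restriction of antisymplectic involutions give eta_KK} and Theorems \ref{tm relative symplectic form} and \ref{tm degeneration of P}, which is precisely the chain you assemble (with Lemmas \ref{lm fibrewise description of lambda} and \ref{lm involutions and spectral correspondence 2} supplying the fibrewise identification of the central-fibre involution). Your additional observation that a normalization of $J$ with $J_C K_C \cong \Oo_C$ is needed for the displayed involution to take the untwisted form \eqref{eq lambda K minus} rather than the $J_C K_C$-twisted form \eqref{eq lambda K} is a correct and careful reading of a convention the paper leaves implicit.
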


\begin{remark}
For $n=1$, $\Pp_C(n, d, J_C)$ is the cotangent bundle of a Prym abelian variety associated to $\zeta_C$ (when $\zeta_X^-$ is ramified) or a disjoint union of them (when $\zeta_X^-$ unramified). 
\end{remark}

\subsubsection{The Arbarello--Sacc\`a--Ferretti systems and Sacc\`a Calabi-Yau's}

\label{sc ASF}

In this subsection, we consider a K3 surface endowed with an antisymplectic involution $\zeta_X^-$ with empty fixed locus and whose quotient $Y = X/\zeta_X$ is a smooth Enriques surface. The canonical divisor of an Enriques surface $Y$ is not trivial but satisfies $2 K_Y \sim 0$. Any smooth curve $D \subset Y$ with positive self-intersection is big and nef, and so is $D + K_Y$. Then, by Kodaira vanishing theorem, 
\[
h^1(Y,D) = h^2(Y,D) = 0,
\]
hence
\[
h^0(Y,D) = 1 + \frac{1}{2} D^2 = g_D,
\]
where $g_D$ is the genus of $D$. The curve $C := D \times_Y X$ is equipped with an unramified $2:1$ cover $\q_C \to D$ and an involution $\zeta_C : C \to C$ without fixed points.

Observe that condition \eqref{eq Lagrangian condition} is satisfied for every smooth curve $D \subset Y$. Then, whenever \eqref{eq condition for J and v} holds for a pair $\J$ and $\v_a$, $\P_{X,\H}(\v_a, \J, nD)$ has $2$ connected components each of them an integrable system of dimension $2n^2(g_D - 1)$ known as the {\it Arbarello--Sacc\`a--Ferretti} integrable system, as described by these authors in \cite{ASF} when $n=1$. The Lagrangian subvariety $\N_{X,\H}(\v_a, nD)$ has dimension $n^2(g_C -1) +1$. Furthermore, Sacc\`a \cite{sacca} studied the moduli space of pure dimension $1$ sheaves over an Enriques surface $\M_{Y,I}(\w_b, nD)$ showing that it is Calabi--Yau. By Proposition \ref{pr q embedding on ssl}, $\M_{Y,I}(\w_b, nD)/\ZZ_2$ is birrational to $\N_{Y,\widetilde{I}}(\v_{2b}, nD)$. 

Observe that $L \cong K_C$ in this case, and $W \cong K_D L_\gamma$, where $L_\gamma \in H^1(D, \ZZ_2)$ is the $2$-torsion line bundle obtained by restricting $K_Y$ to $D$. After Corollary \ref{co degeneration of N^-} $\M_{Y,I}(\w_b, nD)$ degenerates into $\Mm^{L_\gamma K_D}_D(n,b-\delta/2)$ and $\N_{X,\H}(\v_a, nD)$ into $\Nn_C(n,a - \delta)$, which is the fixed point locus of the involution \eqref{eq zeta K minus} constructed out of $\zeta_C$ unramified. It follows from Corollary \ref{co degeneration for P^-} that $\P_{X,\H}(\v_a, J, nD)$ degenerates into $\Pp_C(n,a - \delta, J_C)$, which is the fixed point locus of the involution \eqref{eq lambda K minus}, constructed out of $\zeta_C$ unramified.

\subsubsection{Markusevich--Tickhomirov, Matteini and Sawon--Shen systems}

\label{sc MT}

Consider now an anti-symplectic involution $\zeta_X^-$ on the K3 surface $X$ whose quotient $Y = X/\zeta_X^-$ is a del Pezzo surface of degree $d$. In that case, by their defining property, the anti-canonical bundle $-K_Y$ is ample. In particular, $-(m+1)K_Y$ is big and nef for $m \geq 0$ and by Kodaira vanishing theorem, 
\[
h^1(Y,-mK_Y) = h^2(Y,-mK_Y) = 0,
\]
so
\[
h^0(Y,-mK_Y) = 1 + \frac{m^2 + m}{2} \cdot d.
\]
Note that the genus of a smooth curve in the linear system $-mK_Y$ is $1 + (m^2 - m)d/2$. 

We recall that $\Delta$, the branching locus of $\q_X$, lies in $|-2K_Y|$. Picking a smooth curve $D$ in the linear system $|-K_Y|$, hence $g_D = 1$, pulling-back to another smooth curve $C$ on $X$, we obtain a $2:1$ cover, $\q_C : C \to D$, ramified at the intersection with the ramification locus of $\q_X$, that we denote by $R$. Observe that the length of $R$ coincides with the intersection of $D$ and $\Delta$, so $(-K_Y) \cdot (-2K_Y) = 2d$, form where one can derive that $g_C = 1 + d$. 
 
Markushevich and Tikhomirov \cite{markushevich&tikhomirov} gave the first construction of Prymian intagrable system $\P_{X,\H}(\v_a, J, nC)$ starting from a del Pezzo of degree $d=2$ and for $n=1$. In this case we have that $\zeta_C$ sends a genus $3$ curve $C$, to $D$, an elliptic curve and $\zeta_C$ is ramified at $4$ points and $\P^{\H}_X(\v_a, J, C)$ is a $4$-dimensional symplectic $V$-manifold. 

Matteini extended the construction of \cite{markushevich&tikhomirov}, focusing on the cases of del Pezzo surfaces of degree $d=1$ and $d = 3$. In the case of $d=1$, $\zeta_C : C \to D$ is the cover of a genus $2$ curve over an elliptic curve, and Matteini found that $\P_{X,\H}(\v_a, J, nC)$ is an elliptic K3 surface. In the $d=3$ case, $\zeta_C : C \to D$ is a $2:1$ cover of a genus $4$ curve onto an elliptic curve and $\P_{X,\H}(\v_a, J, C)$ is a $6$-dimensional symplectic $V$-manifold. Even in both cases only $n=1$ was considered by the authors, their construction works for general $n$ as indicated by Matteini in \cite[Section 3.6]{matteini}.

Another $6$-dimensional Prym integrable system is considered by Sawon and Shen in \cite{sawon&shen2, shen}. In this case $Y$ is a del Pezzo surface of degree $d=1$ and $D \subset Y$ lies in $|-2K_Y|$, so $g_D = 2$. The ramification divisor $R$ of the $2:1$ cover $\q_C : C \to D$ has length $(-2K_Y) \cdot (-K_Y) = 2$, hence $g_C = 5$.  

After Corollary \ref{co degeneration for P^-} that $\P_{X,\H}(\v_a, J, nD)$ degenerates into $\Pp_C(n,a - \delta, J_C)$, which is the fixed point locus of the involution \eqref{eq lambda K minus}, constructed out of $\zeta_C$ ramified at $\Delta_C = \Delta_X \cap X$.

One can also consider the natural Lagrangian subvarieties $\N_{X,\H}(\v_a, nD)$ associated to the mentioned Prymian integrable systems. Note that in this case $\N_{X,\widetilde{\I}}(\v_{2b}, nD)$ are birrational to $\M_{Y,\I}(\v_{b}, nD)$. From Corolloary \ref{co degeneration of N^-} we know that $\N_{X,\H}(\v_a, nD)$ degenerates into $\Nn_C(n,a - \delta)$, which is the fixed point locus of the involution \eqref{eq zeta K minus}, and $\M_{Y,\I}(\v_{b}, nD)$ degenerates into $\Mm_D^{K_DQ}(n,b-(n^2-n)(g_D - 1))$, where $Q$ is a line bundle on $D$ whose square is $\Oo_D(\Delta_C)$.

\subsubsection{Sawon--Shen degeneration into the $\Sp(2m,\CC)$-Hitchin system}

\label{sc sawon and shen}

When $\zeta_X^-$ is an anti-sym\-plec\-tic involution on the K3 surface $X$ giving a del Pezzo surface $Y = X / \zeta_X^-$ of degree $d$, the ramification divisor of $\q_X : X \to Y$ is a connected smooth projective curve $\Delta$ of genus $g_\Delta = d + 1$. Trivially, the restriction of $\zeta_X$ to $\Delta$ is the identity $\zeta_\Delta = \id_\Delta$. Setting $\v_\delta = (0, 2m\Delta, 1- (4m^2 - 2m)(g_\delta - 1))$, $\H = \Oo_X(\ell\Delta)$ and $J = \Oo_X(-2m\Delta)$, Sawon and Shen \cite{sawon&shen} have constructed a degeneration of the Prymian integrable system $\P_{X,\H}(\v_\delta, J, 2m\Delta)$ into a compactification of the $\Sp(2m,\CC)$-Hitchin system over $\Delta$, given by the fixed locus of the involution 
\[
\morph{\Mm_\Delta(n,0)}{\Mm_\Delta(n,0)}{(E,\varphi)}{(E^*, -\varphi^t).}{}{\check{\lambda}_{K^{-2m}_\Delta,K}^-}
\]

Note that Corollary \ref{co degeneration of N^-} shows that the natural Lagrangian $\N_{X,\H}(\v_\delta)$ degenerates into the fixed locus of the involution
\[
\morph{\Mm_\Delta(n,0)}{\Mm_\Delta(n,0)}{(E,\varphi)}{(E, -\varphi).}{}{\check{\zeta}_{K}^-}
\]
The morphism obtained by extension of structure group sends the moduli space of $\U(m,m)$-Higgs bundles maps to the fixed locus of $\check{\zeta}_{K}^-$. It is worth noticing that $\U(m,m)$ and $\Sp(2m,\CC)$ are related under the Nadler-Langlads correspondence \cite{nadler}. In \cite{hitchin_char} Hitchin proposed that a $\BBB$-brane constructed over the moduli space of $\Sp(2m,\CC)$-Higgs bundles is Mirror dual to a $\BAA$-brane supported on the moduli space of $\U(m,m)$-Higgs bundles (see Section \ref{sc branes}). Evidence for this duality was given by Hitchin himself in \cite{hitchin_char} and by Hausel, Mellit and Pei in \cite{hausel&mellit&pei}.


\section{Branes and duality}

\label{sc branes}

Non-abelian Hogde theory \cite{corlette, donaldson, hitchin-self, simpson1, simpson2} implies that $\Mm_C(n,d)$ is equipped with a hyperk\"ahler structure $(g, \Gamma_1, \Gamma_2, \Gamma_3)$, where $\Gamma_1$ the complex structure coming from the moduli space of Higgs bundles and $\Gamma_2$ from the moduli of flat connections. The associated Kahler forms $\omega_i(\, \cdot \, , \, \cdot \, ) := g(\, \cdot \, , \Gamma_i (\, \cdot \,) )$ combine into a $\Gamma_1$-holomorphic symplectic form $\omega_2 + \mathrm{i} \omega_3$, which coincides with $\Omega_0$ up to scaling. Similar constructions can be given for $i=2$ and $i=3$.

Following \cite{kapustin&witten}, a {\it $\BAA$-brane} in $\Mm_C(n,d)$ is a pair $(N, \Ggg, \nabla_\Ggg)$ where $N$ is a subvariety which is complex Lagrangian with respect to $\Omega_0$, and $(W, \nabla_W)$ is a flat bundle over $N$. A {\it $\BBB$-brane} in $\Mm_C(n,d)$ is given by a pair $(P, \Fff, \nabla_\Fff)$, where $P$ is a hyperholomorphic subvariety({\it i.e.} holomorphic with respect to $\Gamma_1$, $\Gamma_2$ and $\Gamma_3$) and $(\Fff, \nabla_\Fff)$ a hyperholomorphic sheaf on $P$. This means that the connection $\nabla_{\Fff}$ on the sheaf $\Fff$ is of type $(1,1)$ with respect to all three K\"ahler structures $\Gamma_1$, $\Gamma_2$, $\Gamma_3$. It is conjectured in \cite{kapustin&witten} that mirror symmetry interchanges $\BBB$-branes with $\BAA$-branes within the Higgs moduli space and in a certain limit, mirror symmetry is enhanced via a Fourier-Mukai transform relative to the Hitchin fibration.

Note that the definition of $\BAA$ and $\BBB$-branes extends naturally to any other hyperk\"ahler varieties. The case of the moduli space $\M_{S,\H}(\v_a)$ of sheaves over a smooth projective symplectic surface $S$ was considered in \cite{GYM} by the author, Jardim and Menet.   It is also described in \cite{GYM} the construction of the  $\BBB$-brane $\N_{S,\H}^+(\v_a)$ arising from the fixed locus of a natural involution associated to a sympletic involution on the surface. Also, the authors studied the behaviour of these natural branes under some correspondences. As a consequence of Propositions \ref{pr restriction of antisymplectic involutions give eta_KK} and \ref{pr flatness}, we can describe these degenerations in the context of branes, extending the description to $\P_{S,\H}(\v_a, J)$.

\begin{corollary} \label{co degeneration of branes}
The $\BBB$-brane $\N_{S,\H}^+(\v_a)$ ({\it resp.} the $\BAA$-brane $\N_{S,\H}^-(\v_a)$) arising from the fixed locus of a natural involution associated to a sympletic ({\it resp.} anti-symplectic) involution on a symplectic surface degenerate into the $\BBB$-brane ({\it resp.} $\BAA$-brane) inside the Hitchin system $\Nn_C^+(n,d)$ ({\it resp.} $\Nn_C^-(n,d)$), obtained as the respective fixed locus of the involution $\check{\zeta}_K^+$ ({\it resp.} $\check{\zeta}_K^-$), described in \eqref{eq eta K}.

Similarly, the $\BBB$-brane obtained from $\P_{S,\H}(\v_a, J)$ degenerates into the $\BBB$-brane given by $\Pp_C(n,d, J_C)$. 
\end{corollary}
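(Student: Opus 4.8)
The plan is to deduce the corollary from the degeneration already built in Theorems~\ref{tm degeneration of N} and \ref{tm degeneration of P}, after specializing the surface involution and pairing the two endpoints with brane structures that are already known. The first step is to identify the central-fibre involution. By Proposition~\ref{pr restriction of antisymplectic involutions give eta_KK}, when $S$ is symplectic and $\zeta_S=\zeta_S^{\pm}$ is (anti-)symplectic the normal bundle of the preserved curve is $L=K_C$ and the induced involution $\zeta_{f_0}$ on $\KK=\KK_C$ equals $\zeta_\KK^{\pm}$; combining this with Lemma~\ref{lm eta_sS at t=0} and the spectral description of Lemma~\ref{lm involutions and spectral correspondence}, the involution $\widehat{\zeta}_\sS$ restricts on the central fibre to $\check{\zeta}_K^{\pm}$ of \eqref{eq eta K}, and likewise $\lambda_{\jJ,\sS}$ restricts to $\check{\lambda}_{J_C,K}^{\pm}$. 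Proposition~\ref{pr flatness} then says that $\overline{\Fix(\widehat{\zeta}_\sS)}$ (resp. $\overline{\Fix(\lambda_{\jJ,\sS})}$) is a flat family over $\AA^{1}$ whose generic fibre is the natural subvariety $\N_{S,\H}^{\pm}(\v_a)$ (resp. $\P_{S,\H}(\v_a,J)$) of $\M_{S,\H}(\v_a)$ and whose central fibre is $\Nn_C^{\pm}(n,d)=\Fix(\check{\zeta}_K^{\pm})\subset\Mm_C(n,d)$ (resp. $\Pp_C^{-}(n,d,J_C)=\Fix(\check{\lambda}_{J_C,K}^{-})$), with $d=a-\delta$ --- i.e. exactly the specializations of Theorems~\ref{tm degeneration of N} and \ref{tm degeneration of P} made explicit via Proposition~\ref{pr restriction of antisymplectic involutions give eta_KK}.

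The second step is to certify the brane types at the two ends, and here nothing new has to be proved. On the generic fibre, $\M_{S,\H}(\v_a)$ carries the hyperk\"ahler structure considered in \cite{GYM}: there the fixed locus of the natural involution attached to a symplectic involution on $S$ is shown to be a $\BBB$-brane, and the fixed locus of the one attached to an anti-symplectic involution to be complex Lagrangian for $\Omega_\M$, hence a $\BAA$-brane; by Corollary~\ref{co lambda is Poisson} the involution $\lambda_{J,S}^{-}=\widehat{\zeta}_S^{-}\circ\xi_{J,S}$ is holomorphic symplectic, so $\P_{S,\H}(\v_a,J)$ is a $\BBB$-brane. On the central fibre, $\Mm_C(n,d)$ carries the non-abelian Hodge hyperk\"ahler structure recalled at the beginning of Section~\ref{sc branes}, and the closing corollary of Section~\ref{sc involutions and Higgs bundles} (resting on \cite{heller&schaposnik,garciaprada&wilkins,basu&garciaprada}, or on Propositions~\ref{pr natural involutions preserve behaviour under Poisson structure} and \ref{pr eta antisymplectic}) records precisely that $\Nn_C^{+}(n,d)$ and $\Pp_C^{-}(n,d,J_C)$ are holomorphic symplectic --- hence $\BBB$ --- subvarieties while $\Nn_C^{-}(n,d)$ is complex Lagrangian --- hence $\BAA$. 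Splicing these onto the flat families of the first step yields the statement. For the $\BAA$ case one may add a line using Theorem~\ref{tm relative symplectic form}: the relative holomorphic symplectic form $\varOmega$ specializes from $\Omega_\M$ on the generic fibre to $\Omega_0=\omega_2+\mathrm{i}\omega_3$ on $\Mm_C(n,d)$, and isotropy together with half-dimensionality is a closed condition along a flat family, so the flat limit of the Lagrangian $\N_{S,\H}^{-}(\v_a)$ is Lagrangian for $\Omega_0$ --- an independent check that $\Nn_C^{-}(n,d)$ is a $\BAA$-brane.

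The main difficulty, and the one point I would handle with care, is that being a $\BBB$-brane involves all three complex structures of the hyperk\"ahler package, whereas the Donagi--Ein--Lazarsfeld family is algebraic for only one of them, so hyperholomorphicity need not a priori survive the flat limit. I avoid confronting this: the corollary merely asserts that a known $\BBB$-brane degenerates into a \emph{known} $\BBB$-brane, so it is enough to identify the central fibre of the flat family as a subvariety --- which is exactly what Proposition~\ref{pr restriction of antisymplectic involutions give eta_KK} provides, by matching the $+/-$ signs of the surface involution with those of $\zeta_\KK^{\pm}$ and thereby of $\check{\zeta}_K^{\pm}$ and $\check{\lambda}_{J_C,K}^{\pm}$. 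The remaining bookkeeping is to keep the signs straight and, when the irregularity of $S$ does not vanish, to read $\N_{S,\H}^{\pm}(\v_a)$ and $\P_{S,\H}(\v_a,J)$ as the relevant components over $\q_S^{*}\{nD\}$, exactly as in Theorems~\ref{tm degeneration of N} and \ref{tm degeneration of P}.
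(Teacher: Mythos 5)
Your proposal is correct and follows essentially the same route as the paper, which derives the corollary in one line from Proposition \ref{pr restriction of antisymplectic involutions give eta_KK} (identifying the central-fibre involutions as $\check{\zeta}_K^{\pm}$ and $\check{\lambda}_{J_C,K}^{\pm}$) and Proposition \ref{pr flatness} (flatness of the fixed-locus families), with the brane structures at both ends imported from \cite{GYM} and the corollary closing Section \ref{sc involutions and Higgs bundles}. Your extra remarks --- the sign bookkeeping via Corollary \ref{co lambda is Poisson} showing that the $\BBB$-brane $\P_{S,\H}(\v_a,J)$ arises from the \emph{anti}-symplectic $\zeta_S^-$, and the caveat that hyperholomorphicity is not claimed to persist through the flat limit but only that the limit is identified with an independently known $\BBB$-brane --- are consistent with, and make explicit, what the paper leaves implicit.
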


We now discuss the duality under Mirror symmetry of the branes associated to a Prymian integrable sysem and the natural Lagrangian.

Consider the Mukai vector $\v_0$ on $S$, where we assume $a = 0$, and recall from Remark \ref{rm v_a eta-invariant} that is is the pull-back of the Mukai vector $\w_0$ on $T$. Following Remark \ref{rm choice of J_0 and v_0}, choose $J_0 = \Oo_S(-nC)$. In this section we show that certain open subsets of the subvarieties $\N_{S,\H}(\v_0,B)$ and $\P_{S,\H}(\v_0,J_0,B)$ are dual under Fourier--Mukai transform.

Observe that the diagram,
\[
\xymatrix{
\M_{S,\H}(\v_0) \ar[rrdd]^{\h_S} & \ar@{_(->}[l] \P_{S,\H}(\v_0, J_0,B) \ar[rd]^{\nu} &  & \N_{S,\H}(\v_0,B) \ar@{^(->}[r] \ar[dl]_{\mu} & \M_{S,\H}(\v_0) \ar[lldd]_{\h_S}
\\
& & \q_S^*\{ B \} \ar@{^(->}[d] & &
\\
& & \{ A \}, & &
}
\]
restricts, after Proposition \ref{pr q embedding on ssl} and Remark \ref{rm Prym at ssl}, to 
\[
\xymatrix{
\Jac_{\Aa^\sm / \{ A \}^\sm} \ar[rrdd]^{\h_S} & \ar@{_(->}[l] \Prym \left (\q_{\Aa^\ssl}/{\{ B \}^{\ssl}}\right ) \ar[rd]^{\nu} &  & \Jac_{\Bb^{\, \ssl} / \{ B \}^\ssl} \ar@{^(->}[r]^{\widehat{\q}_{\Aa^\ssl}} \ar[dl]_{\mu} & \Jac_{\Aa^\sm / \{ A \}^\sm} \ar[lldd]_{\h_S}
\\
& & \q_S^*\{ B \}^\ssl \ar@{^(->}[d] & &
\\
& & \{ A \}^\sm, & &
}
\]
where we have dropped the systematic reference to degree $0$ in our notation for Jacobians.

Let us recall that one can equip $\Jac_{\Aa^\sm/\{A \}^\sm} \times_{\{ A \}^\sm} \Jac_{\Aa^\sm/\{A \}^\sm}$ with a relative Poincar\'e bundle $\Uu_{\Aa^\sm}$. Denoting by $\pi_i$ the projection from $\Jac_{\Aa^\sm/\{A \}^\sm} \times_{\{ A \}^\sm} \Jac_{\Aa^\sm/\{A \}^\sm}$ to the $i$-th factor, one can consider the relative Fourier--Mukai transform \cite{mukai_fourier}
\[
\morph{\Dd^b(\Jac_{\Aa^\sm/\{A \}^\sm})}{\Dd^b(\Jac_{\Aa^\sm/\{A \}^\sm})}{\Ff}{R\pi_{2,*} \left ( \pi_1^* \Ff \otimes \Uu_{\Aa^\sm} \right ),}{}{\Psi_{\Aa^\sm}}
\]
and its inverse,
\[
\morph{\Dd^b(\Jac_{\Aa^\sm/\{A \}^\sm})}{\Dd^b(\Jac_{\Aa^\sm/\{A \}^\sm})}{\Gg}{R\pi_{1,*} \left ( \pi_2^* \Gg \otimes \Uu^{-1}_{\Aa^\sm} \right ).}{}{\Psi_{\Aa^\sm}^{-1}}
\]
After the choice of $J_0$ and \eqref{eq eta_J C'} and the properties of Fourier--Mukai transforms (see \cite{polishchuck} or instance), the composition is the auto-equivalence of derived categories
\[
\Psi_{\Aa^\sm} \circ \Psi_{\Aa^\sm}^{-1} = \xi_{J_0, S} [-g_A].
\]
One can provide a similar construction for $\Bb^\ssl$ in $T$, giving rise to the Fourier--Mukai transform $\Psi_{\Bb^\ssl}$. Recalling the morphism $\widehat{\q}_{\Aa^\ssl}$ from \eqref{eq definition hatq_Aa}, both transforms satisfy (see \cite[(11.3.3)]{polishchuck} for instance),
\begin{equation} \label{eq FM relation}
\Psi_{\Aa^\sm} \circ R \widehat{\q}_{\Aa^\ssl, *} \cong L \Nm(\q_{\Aa^\ssl})^* \circ \Psi_{\Bb^\ssl},
\end{equation}
where the norm map $\Nm(\q_{\Aa^\ssl})$ is the dual of $\widehat{\q}_{\Aa^\ssl}$.

\begin{proposition} \label{pr FM duals}
Whenever $\q_{\Aa^\ssl}$ has ramification, the structural sheaves of the subvarieties $\left . \N_{S,\H}(\v_0, B) \right |_{\{ B \}^\ssl}$ and $\left . \P_{S,\H}(\v_0, J_0, B) \right |_{\{ B \}^\ssl}$ are dual under a relative Fourier--Mukai transform.

If $\q_{\Aa^\ssl}$ is unramified, the structural sheaf on a connected component of $\left . \P_{S,\H}(\v_0, J_0, B) \right |_{\{ B \}^\ssl}$ is Fourier--Mukai dual to $\left . \N_{S,\H}(\v_0, B) \right |_{\{ B \}^\ssl}$ equipped with the pair of line bundles obtained from pushing forward the structural sheaf under $\widehat{\q}_{\Aa^\ssl.}$ direct sum of the trivial sheaf and the pull-back of a torsion $2$ line bundle $\Ll \to \Bb^\ssl$.
\end{proposition}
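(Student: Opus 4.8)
The plan is to work entirely over the open locus $\{B\}^\ssl$, where everything is a family of abelian varieties and Fourier--Mukai theory applies cleanly, and to identify the two subvarieties as mutual Fourier--Mukai transforms via the norm/pull-back duality for the cover of curves $\q_{\Aa^\ssl}$. First I would recall, from Proposition~\ref{pr q embedding on ssl} and Remark~\ref{rm Prym at ssl}, the two descriptions: $\left.\N_{S,\H}(\v_0,B)\right|_{\{B\}^\ssl}$ is $\image(\widehat{\q}_{\Aa^\ssl})$, which is $\Jac_{\Bb^\ssl/\{B\}^\ssl}$ in the ramified case and $\Jac_{\Bb^\ssl/\{B\}^\ssl}/\ZZ_2$ in the unramified case, while $\left.\P_{S,\H}(\v_0,J_0,B)\right|_{\{B\}^\ssl} = \Prym(\q_{\Aa^\ssl}/\{B\}^\ssl)$ (a single component, or the identity component, respectively), realized as $\ker(\id_{\Jac}+\widehat{\zeta}_{\Aa^\ssl})_0 \subset \Jac_{\Aa^\sm/\{A\}^\sm}$. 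The key point is the classical fact that, for a finite morphism of smooth curves, the Prym is dual to the connected component of the kernel of the norm map, and that pull-back $\widehat{\q}_{\Aa^\ssl}$ and norm $\Nm(\q_{\Aa^\ssl})$ are dual isogenies onto their images; combined with the intertwining relation \eqref{eq FM relation}, this will convert ``structure sheaf of $\image(\widehat{\q}_{\Aa^\ssl})$'' into ``structure sheaf of a Prym'' under $\Psi_{\Aa^\sm}$.

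Concretely, I would argue as follows. The structure sheaf $\Oo_{\Jac_{\Bb^\ssl}}$ is $R\widehat{\q}_{\Aa^\ssl,*}$ of nothing extra in the ramified case; more precisely $R\widehat{\q}_{\Aa^\ssl,*}\Oo_{\Jac_{\Bb^\ssl}}$ is the structure sheaf of $\left.\N_{S,\H}(\v_0,B)\right|_{\{B\}^\ssl}$ since $\widehat{\q}_{\Aa^\ssl}$ is a closed embedding there. Applying $\Psi_{\Aa^\sm}$ and using \eqref{eq FM relation}, one gets $\Psi_{\Aa^\sm}\bigl(\Oo_{\N}\bigr) \cong L\Nm(\q_{\Aa^\ssl})^*\Psi_{\Bb^\ssl}(\Oo_{\Jac_{\Bb^\ssl}})$. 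Now $\Psi_{\Bb^\ssl}(\Oo_{\Jac_{\Bb^\ssl}})$ is (a shift of) the skyscraper at the origin of the dual family $\Jac_{\Bb^\ssl}^\vee$, by the standard computation of the Fourier--Mukai transform of the structure sheaf of an abelian variety. Pulling this skyscraper back along $\Nm(\q_{\Aa^\ssl})$ --- which is the dual isogeny to $\widehat{\q}_{\Aa^\ssl}$, hence has image $\Jac_{\Aa^\sm}^\vee$ dual to the Prym and kernel dual to the cokernel of pull-back --- yields (up to a shift) the structure sheaf of $\ker(\Nm(\q_{\Aa^\ssl}))$. Finally one identifies $\ker(\Nm(\q_{\Aa^\ssl}))$ with $\Prym(\q_{\Aa^\ssl}) = \left.\P_{S,\H}(\v_0,J_0,B)\right|_{\{B\}^\ssl}$ using the self-duality of Jacobians (theta polarization) and the fact that under this self-duality $\widehat{\zeta}_{\Aa^\ssl}$ is its own adjoint, so that $\ker(\id+\widehat{\zeta}_{\Aa^\ssl})_0$ and $\ker(\Nm(\q_{\Aa^\ssl}))$ are exchanged. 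In the unramified case, $\widehat{\q}_{\Aa^\ssl}$ has kernel $\ZZ_2$ generated by $\Ll$, so $R\widehat{\q}_{\Aa^\ssl,*}\Oo_{\Jac_{\Bb^\ssl}}$ is no longer a structure sheaf but the push-forward of $\Oo$ along the degree-$2$ quotient, i.e. $\Oo \oplus \Ll'$ for the corresponding $2$-torsion sheaf on the image; tracking this through the same computation gives the stated pair of line bundles (trivial $\oplus$ pull-back of $\Ll$), and the Prym acquires two components, of which one takes the identity component.

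The main obstacle I expect is bookkeeping in the \emph{relative} and \emph{derived} setting: one must ensure that $\Psi_{\Aa^\sm}$, $\Psi_{\Bb^\ssl}$, $R\widehat{\q}_{\Aa^\ssl,*}$ and $L\Nm(\q_{\Aa^\ssl})^*$ all commute with base change over $\{B\}^\ssl$ (so that the fibrewise computation of $\Psi(\Oo_{\mathrm{ab.var.}})$ as a shifted skyscraper globalizes), that the universal Poincar\'e bundle $\Uu_{\Aa^\sm}$ restricts compatibly to $\Uu_{\Bb^\ssl}$ under the norm/pull-back (which is exactly the content of \eqref{eq FM relation} and \cite[(11.3.3)]{polishchuck}), and that the degree shifts $[-g_A]$, $[-g_B]$ and $[-g_B+g_A]$ appearing in \cite[(11.3.3)]{polishchuck} and in $\Psi_{\Aa^\sm}\circ\Psi_{\Aa^\sm}^{-1}=\xi_{J_0,S}[-g_A]$ bookkeep correctly so that the final object is genuinely a sheaf (the structure sheaf of the Prym, possibly twisted) placed in degree $0$ rather than a complex. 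None of these is conceptually hard, but getting the shifts and the torsion-line-bundle twist exactly right --- and matching the two connected components in the unramified case with the choice $J_0 = \Oo_S(-nC)$ used to normalize $\xi_{J_0,S}$ as the plain dualizing involution (Remark~\ref{rm choice of J_0 and v_0}) --- is where care is needed.
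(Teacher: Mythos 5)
Your proposal is correct and follows essentially the same route as the paper's proof: identify the structure sheaf of $\left.\N_{S,\H}(\v_0,B)\right|_{\{B\}^\ssl}$ with $R\widehat{\q}_{\Aa^\ssl,*}\Oo_{\Jac_{\Bb^\ssl}}$ via Proposition \ref{pr q embedding on ssl}, compute $\Psi_{\Bb^\ssl}(\Oo)$ as the relative skyscraper at the identity, pull it back along $\Nm(\q_{\Aa^\ssl})$ to land on the Prym, and conclude from the intertwining relation \eqref{eq FM relation}, with the unramified case handled through the $\ZZ_2$-factorization of \eqref{eq q factors through Z_2 on ssl} and the two components of $\ker(\Nm)$. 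The extra care you take with shifts, base change, and the duality between pull-back and norm is left implicit in the paper but does not change the argument.
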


\begin{proof}
We start by the ramified case. Observe that $R \widehat{\q}_{\Aa^\ssl, *} \Oo_{\Jac_{\Bb}}$ coincides with the trivial sheaf over $\N_{S,\H}(\v_0, B)|_{\{ B \}^\ssl}$ after Proposition \ref{pr q embedding on ssl}. Also, $\Psi_{\Bb^\ssl} (\Oo_{\Jac_{\Bb}})$ is the relative sky-scraper sheaf at the identity, so its pull-back under $\Nm(\q_{\Aa^\ssl})$ is precisely the trivial sheaf over $\P_{S,\H}(\v_0, J_0, B)|_{\{ B \}^\ssl}$. Then, the first statement follows easily from \eqref{eq FM relation}.

The second statement follows from the diagram \eqref{eq q factors through Z_2 on ssl} and the fact that the preimage of the Norm map has two connected components.
\end{proof}

Recalling Proposition \ref{pr FM duals}, one immediately deduces the following.

\begin{corollary} \label{co FM duals}
Whenever $\q_C$ has ramification, the structural sheaves of $\Nn_C(n,d)|_{\{ B \}^\ssl}$ and $\Pp_C(n,d,J)|_{\{ B \}^\ssl}$ are dual under a relative Fourier--Mukai transform.

If $\q_C$ is unramified, the later is Fourier--Mukai dual to the sheaf supported on $\Nn_C(n,d)|_{\{ B \}^\ssl}$ given by the direct sum of the trivial sheaf and the line bundle associated to the unramified $2:1$ cover of the family of spectral curves.
\end{corollary}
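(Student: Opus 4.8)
The plan is to transport Proposition \ref{pr FM duals} through the Donagi--Ein--Lazarsfeld degeneration, specialising to the central fibre $t=0$. Recall from the discussion around \eqref{eq compactification of Higgs space} that the Higgs moduli space $\Mm^L_C(n,d)$ sits inside $\M_{\LL,H_0}(\v_{d+\delta})$ as the open locus of sheaves whose support avoids $\sigma_\infty$ and has rank $1$ on each component; under this identification the spectral curves in $\Tot(L)$ play the role of the curves $A$, the spectral curves in $\Tot(W)$ play the role of the curves $B$, and the double cover $\q_C:C\to D$ induces exactly the covering $\q_{\Aa^{\ssl}}$ on the families of smooth spectral curves (this is the content of diagram \eqref{eq definition of q_Aa} together with Lemma \ref{lm eta_Ss_0 = q_f_0}, which identifies $W$ as the descent of $L$). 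Consequently the subvarieties $\Nn_C(n,d)|_{\{B\}^{\ssl}}$ and $\Pp_C(n,d,J)|_{\{B\}^{\ssl}}$ are precisely the restrictions to the Higgs locus of $\N_{\LL,H_0}(\v_{d+\delta},nD)|_{\{B\}^{\ssl}}$ and $\P_{\LL,H_0}(\v_{d+\delta},J,nD)|_{\{B\}^{\ssl}}$, via Lemmas \ref{lm Nn supported on vector space} and \ref{lm Pp supported on vector space}.

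First I would apply Proposition \ref{pr FM duals} with $S$ replaced by the ruled surface $\LL$, the curve $C\subset S$ replaced by $\sigma_0\subset\LL$, and $T$ replaced by $\WW$ (so that $A=$ spectral curve upstairs, $B=$ spectral curve downstairs). One must check the hypotheses: $\q_{\Aa^{\ssl}}$ is ramified precisely when $\q_C$ is ramified, and unramified precisely when $\q_C$ is, by construction of the spectral cover. In the ramified case, Proposition \ref{pr FM duals} gives that the structural sheaves of $\N_{\LL,H_0}(\v_0,B)|_{\{B\}^{\ssl}}$ and $\P_{\LL,H_0}(\v_0,J_0,B)|_{\{B\}^{\ssl}}$ are exchanged by the relative Fourier--Mukai transform $\Psi_{\Aa^{\sm}}$; restricting this statement to the open Higgs locus $\supp\cap\sigma_\infty=\emptyset$ — which is compatible with the relative Jacobian fibration and hence with the relative Poincaré bundle and $\Psi_{\Aa^{\sm}}$ — yields the first claim, since there $\N_{\LL,H_0}$ and $\P_{\LL,H_0}$ restrict to $\Nn_C$ and $\Pp_C$. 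In the unramified case, Proposition \ref{pr FM duals} gives that the structural sheaf on a connected component of $\P_{\LL,H_0}(\v_0,J_0,B)|_{\{B\}^{\ssl}}$ is Fourier--Mukai dual to $\N_{\LL,H_0}(\v_0,B)|_{\{B\}^{\ssl}}$ equipped with the direct sum of the trivial line bundle and $\widehat{\q}_{\Aa^{\ssl}}^*\Ll$ for the relative $2$-torsion line bundle $\Ll\to\Bb^{\ssl}$; restricting again to the Higgs locus, and noting that $\Ll$ is exactly the line bundle classifying the unramified double cover of the family of spectral curves downstairs, gives the second claim.

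The routine but slightly delicate point is the compatibility check: one must verify that the relative Fourier--Mukai transform on $\Dd^b(\Jac_{\Aa^{\sm}/\{A\}^{\sm}})$, the relation \eqref{eq FM relation}, and the identifications of structural sheaves all behave well under restriction to the open subfamily of spectral curves not meeting $\sigma_\infty$. This is harmless because Fourier--Mukai along an open subfamily of the base is just the restriction of the transform, and the Poincaré bundle restricts to the Poincaré bundle; nevertheless it should be stated. The only genuine obstacle — and it is mild — is ensuring that the degeneracy/reducibility issues of spectral curves do not interfere: we are working over the locus $\{B\}^{\ssl}$ of \emph{smooth} spectral curves with smooth lift, where every rank-$1$ torsion-free sheaf is a line bundle and stable (Remark \ref{rm stability on the smooth locus}), so the relative Jacobian picture of Proposition \ref{pr FM duals} applies verbatim. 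Thus the corollary follows directly by specialising Proposition \ref{pr FM duals} to this setting.
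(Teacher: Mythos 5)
Your proposal is correct and follows essentially the same route as the paper, whose entire proof is the remark that the corollary is immediate from Proposition \ref{pr FM duals}; you have simply spelled out the specialisation of that proposition to the ruled surface $\LL$, its quotient $\WW$, and the families of spectral curves, together with the (harmless) compatibility of the relative Fourier--Mukai transform with restriction to the open Higgs locus.
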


In this context, Corollaries \ref{co degeneration of N^-} and \ref{co degeneration for P^-} can be seen, respectively, as a degeneration of $\BAA$ and $\BBB$-branes, from the moduli space of sheaves on symplectic surfaces into the Hitchin system.

\end{document}